\documentclass[10pt, reqno]{amsart}
\usepackage{hyperref}
\hypersetup{
	colorlinks=true,
	citecolor=blue,
	linkcolor=blue,
	filecolor=magenta,      
	urlcolor=cyan,
}

\usepackage{geometry}
\usepackage{setspace}
\setstretch{1.02}
\usepackage[capitalize,noabbrev,nameinlink]{cleveref}
\usepackage{fullpage}
\usepackage{amsmath, amsthm, amssymb, graphicx, dsfont, stmaryrd, extarrows, enumitem}
\usepackage[T1]{fontenc}
\usepackage{textcomp}
\usepackage{lmodern}
\usepackage{microtype} 
\usepackage{amscd}
\usepackage{mathrsfs}
\usepackage{tikz-cd}
\usetikzlibrary{matrix,patterns}
\usepackage[colorinlistoftodos]{todonotes}
\usepackage{color}
\usepackage{bbm}
\usepackage{verbatim}
\usepackage[mathscr]{euscript}
\usepackage{bm}

\numberwithin{equation}{section}
\theoremstyle{plain}
\newtheorem{thm}[equation]{Theorem}
\newtheorem*{thm*}{Theorem}

\newtheorem*{prop*}{Proposition}

\newtheorem{prop}[equation]{Proposition}
    
\newtheorem{cor}[equation]{Corollary}       
\newtheorem{lem}[equation]{Lemma}

\theoremstyle{definition} 
\newtheorem{defn}[equation]{Definition}

\newtheorem{rem}[equation]{Remark}

\newcommand{\Z}{\mathbb{Z}}
\newcommand{\Q}{\mathbb{Q}}

\newcommand{\Hom}{\mathrm{Hom}}
\newcommand{\iHom}{\underline{\mathrm{Hom}}}

\newcommand{\msf}[1]{\mathsf{#1}}

\newcommand{\mc}[1]{\mathcal{#1}}
\newcommand{\mrm}[1]{\mathrm{#1}}

\newcommand{\T}{\mathsf{T}}

\newcommand{\1}{\mathds{1}}

\renewcommand{\mod}[1]{\mathrm{Mod}_{#1}}

\newcommand{\A}{\mathsf{A}}

\newcommand{\D}{\mathsf{D}}

\renewcommand{\mod}[1]{\msf{mod}(#1)}
\newcommand{\Mod}[1]{\msf{Mod}(#1)}
\newcommand{\Flat}[1]{\msf{Flat}(#1)}
\newcommand{\ab}{\msf{Ab}}
\newcommand{\ti}{\textit}

\setlength{\parindent}{0cm}
\setlength{\parskip}{0.8ex}
\newcommand{\rlim}{\varinjlim}
\newcommand{\llim}{\varprojlim}
\renewcommand{\t}{\text}
\renewcommand{\c}{\mrm{c}}

\newcommand{\op}{\mrm{op}}

\renewcommand{\Flat}[1]{\msf{Flat}(#1)}
\renewcommand{\flat}[1]{\msf{flat}(#1)}

\newcommand{\closed}[1]{{{\msf{KZg}}_{\msf{Cl}}^{\otimes}}(#1)}

\newcommand{\hspec}[1]{\msf{Spc}^{\msf{h}}(#1)}

\makeatletter
\@namedef{subjclassname@2020}{\textup{2020} Mathematics Subject Classification}
\makeatother
\title{The homological spectrum via definable subcategories}
\begin{document}
\author{Isaac Bird}
\address[Bird]{Department of Algebra, Faculty of Mathematics and Physics, Charles University in Prague, Sokolovsk\'{a} 83, 186 75 Praha, Czech Republic}
\email{bird@karlin.mff.cuni.cz}
\author{Jordan Williamson}
\address[Williamson]{Department of Algebra, Faculty of Mathematics and Physics, Charles University in Prague, Sokolovsk\'{a} 83, 186 75 Praha, Czech Republic}
\email{williamson@karlin.mff.cuni.cz}
\subjclass[2020]{18G80, 18F99, 18E45, 18E10}
\maketitle
\begin{abstract}
We develop an alternative approach to the homological spectrum of a tensor-triangulated category through the lens of definable subcategories. This culminates in a proof that the homological spectrum is homeomorphic to a quotient of the Ziegler spectrum. Along the way, we characterise injective objects in homological residue fields in terms of the definable subcategory corresponding to a given homological prime. We use these results to give a purity perspective on the relationship between the homological and Balmer spectrum.
\end{abstract}
\setcounter{tocdepth}{1}
\tableofcontents

\section{Introduction}
Support theories have long played an important role in pure mathematics: for instance, in commutative algebra, modular representation theory~\cite{BCR, Carlson}, stable homotopy theory~\cite{HS}, and beyond. In the seminal work~\cite{BalSpec}, Balmer proved the existence of a universal support theory, unifying the above examples through \emph{tensor-triangular geometry}, and providing a general framework of support in a plethora of settings. For a rigidly-compactly generated tensor-triangulated category $\T$ with compact objects $\T^\c$, this universal support theory consists of a topological space $\msf{Spc}(\T^\c)$, together with a support function which assigns to every object of $\T^\c$ a subset of $\msf{Spc}(\T^\c)$. The space $\msf{Spc}(\T^\c)$ is called the \emph{Balmer spectrum}, and this universal support theory classifies the thick tensor ideals of $\T^\c$. To appreciate the generality of the framework of tensor-triangular geometry, we refer the reader to~\cite{axiomatic} for a wide range of examples.

A standard but powerful strategy in algebra is descent: passing to residue fields and then reassembling the data to deduce global information using Nakayama's lemma. However, in general tensor-triangulated categories, honest residue fields are not known to exist. As a key aspect of tensor-triangular geometry is the diversity of examples which inhabit it, the nonexistence of residue fields led Balmer-Krause-Stevenson~\cite{bks} to construct \emph{homological residue fields} as replacements, which can be assembled into a topological space $\hspec{\T^\c}$ called the \emph{homological spectrum}~\cite{Balmernilpotence}. However, these homological residue fields are in general hard to compute~\cite{BalmerCameron, CameronStevenson}, but nonetheless have proven their utility through the good behaviour they exhibit, as we now describe.
\newpage

Although the Balmer spectrum is universal, the related notion of support for big (i.e., non-compact) objects does not necessarily behave well~\cite{BF,Zou}. For example, the associated support theory does not satisfy the desired tensor-product formula in general, and moreover, in order for the support to classify localising tensor ideals of $\T$, one must make assumptions regarding the topology on $\msf{Spc}(\T^\c)$~\cite{bhs,Zou}. On the other hand, Balmer~\cite{Balmerhomsupp} showed that the homological spectrum can be used as a basis for a support theory for big objects which always satisfies the tensor-product formula. In addition, Balmer~\cite{Balmernilpotence} proved an abstract nilpotence theorem for tensor-triangulated categories based on the homological spectrum, generalising the nilpotence theorem in stable homotopy theory~\cite{HS} and unifying it with nilpotence theorems in other settings~\cite{Hopkinsglobal, Neemanchromatic,Thomason}. As such, the homological spectrum provides some technical advantages over the Balmer spectrum.

However, there is a significant difference between the homological spectrum and the Balmer spectrum, namely in where the points lie: the Balmer spectrum lives in the compact objects, whereas the homological spectrum lives inside the functors on the compacts. More explicitly, the points of the Balmer spectrum are the prime thick $\otimes$-ideals of $\T^{\c}$, while the points of the homological spectrum are the maximal Serre $\otimes$-ideals of $\mod{\T^{\c}}$, the category of finitely presented right modules over $\T^{\c}$. Consequently, the homological spectrum provides a more appealing entrance to tensor-triangular geometry for those of a functorial disposition.

It is precisely this viewpoint which we develop. More precisely, we formalise the relationship between the homological spectrum and the Ziegler spectrum, thus building a bridge between tensor-triangular geometry and pure homological algebra. In \cref{backgroundsection} we provide the necessary background in purity for those less familiar with these techniques. In particular, since the Ziegler spectrum may be unfamiliar to many readers, let us briefly recall its inception and applications here. 

Originally introduced by Ziegler in the model theory of modules in \cite{Ziegler}, but later expanded to locally coherent categories~\cite{herzog, krspec2}, finitely accessible categories~\cite{kredc, dac}, and most recently triangulated categories~\cite{krsmash}, the Ziegler spectrum of $\T$, denoted $\msf{Zg}(\T)$, is a topological space whose points are given by the indecomposable pure injective objects of $\T$, and whose closed sets biject with Serre subcategories of $\mod{\T^{\c}}$. Understanding big objects in tensor-triangulated categories is a difficult and active area of study~\cite{bigcats}: it is not known that there is a set of localising tensor ideals. The Ziegler spectrum provides one possible solution to this: there is only a set of indecomposable pure injective objects, and these enable one to understand big objects through the definable subcategories they generate. These subcategories are those which can be realised as kernels of finitely presented functors, and have desirable closure properties. Moreover, they are also in bijection with the Serre subcategories of $\mod{\T^{\c}}$. 

Definable subcategories have already shown their utility in the study of triangulated categories: they were used in \cite{krsmash} to show that there is only a set of smashing localisations, for example, and their use is widespread in more representation theoretic applications such as (co)silting and rank functions on triangulated categories (\cite{AMV,discretederived,dualitypairs,conde2022functorial,krcq}, among many others). Consequently, the Ziegler spectrum can provide a striking amount of information about the triangulated category in question.

One can immediately see there is common ground between the Ziegler and homological spectrum, namely the use of Serre subcategories. But this is not enough for us to relate the spaces, since on the Ziegler side these correspond to closed sets, while on the homological they correspond with points. Moreover, the Ziegler spectrum considers \ti{all} Serre subcategories of $\mod{\T^{\c}}$, while the homological spectrum only considers \emph{maximal} Serre $\otimes$-ideals. The body of this paper focusses on how to bypass these obstacles to compare the spectra. 

We first state our main theorem and then put it into context, before giving details of the proof and the other significant results required for it. The culmination of the paper is the recovery of the homological spectrum from a space, $\closed{\T}^\msf{GZ}$, constructed from the Ziegler spectrum.

\begin{thm*}[{\ref{hspecisgz}}]
Let $\T$ be a rigidly-compactly generated tensor-triangulated category. Then there is a homeomorphism \[\Phi\colon \hspec{\T^\c} \to \closed{\T}^\msf{GZ}.\]
\end{thm*}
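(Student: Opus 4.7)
The plan is to use the Herzog--Krause bijection between Serre subcategories of $\mod{\T^\c}$ and Ziegler-closed subsets of $\msf{Zg}(\T)$ to construct $\Phi$ on underlying sets, and then to compare the topologies on both sides. A point of $\hspec{\T^\c}$ is a maximal Serre $\otimes$-ideal $\mc{B}$ of $\mod{\T^\c}$, which under this bijection corresponds to a closed set $Z_\mc{B} \subseteq \msf{Zg}(\T)$; the role of $\closed{\T}^\msf{GZ}$ is to package such closed sets as single points carrying a topology derived from the Ziegler topology.

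The first step is to define $\Phi$ by sending $\mc{B}$ to the point of $\closed{\T}^\msf{GZ}$ represented by $Z_\mc{B}$. To see that this lands in $\closed{\T}^\msf{GZ}$, one must verify that $Z_\mc{B}$ is $\otimes$-closed as a definable subcategory, reflecting that $\mc{B}$ is a $\otimes$-ideal, and that it satisfies whatever maximality or irreducibility condition distinguishes points of $\closed{\T}^\msf{GZ}$ among all $\otimes$-closed Ziegler-closed sets. The latter requirement should follow from the maximality of $\mc{B}$ together with the characterisation of injectives in homological residue fields established earlier in the paper, which links a given homological prime to the definable subcategory it cuts out.

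For bijectivity, the inverse $\Phi^{-1}$ takes a point of $\closed{\T}^\msf{GZ}$ to the associated Serre $\otimes$-subcategory under the inverse of the Herzog--Krause bijection. Surjectivity of $\Phi$ then amounts to showing that every distinguished closed subset arises from a genuine maximal Serre $\otimes$-ideal of $\mod{\T^\c}$, rather than merely a Serre $\otimes$-subcategory, whereas injectivity is immediate from the underlying bijection being one-to-one.

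The main obstacle is the topological comparison. The homological spectrum is topologised by declaring the sets $\supph(a) = \{\mc{B} : a \notin \mc{B}\}$ for $a \in \T^\c$ to be closed, while $\closed{\T}^\msf{GZ}$ carries a quotient topology descending from the Ziegler structure on closed subsets of $\msf{Zg}(\T)$. The strategy is to show that $\Phi$ carries $\supph(a)$ precisely to the subset of $\closed{\T}^\msf{GZ}$ represented by Ziegler-closed subsets meeting the basic Ziegler-open set of indecomposable pure injectives not annihilated by $a$, and to verify the reverse correspondence. Establishing that $\Phi$ and $\Phi^{-1}$ carry closed sets to closed sets will hinge on the finite presentability of the functors parametrising the Ziegler topology and on the definability properties of the subcategories involved; this is where the technical bulk of the proof resides.
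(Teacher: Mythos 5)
Your set-level plan is essentially the paper's: use the correspondence between Serre $\otimes$-ideals of $\mod{\T^\c}$ and $\otimes$-closed definable (equivalently, Ziegler-closed) subcategories, use maximality of $\mc{B}$ to see that the associated class is a closed point of the Kolmogorov quotient, and get surjectivity by showing that a closed point determines a \emph{maximal} Serre $\otimes$-ideal. (You should still say why the class $\mathscr{D}(\mc{B})$ is nonzero and hence actually yields a point; in the paper this is carried by the pure injective $E_\mc{B}\in\mathscr{D}(\mc{B})$ coming from the injective hull of $Q\bm{y}\1$, and the simplicity statement \cref{simple}.) The genuine gap is in the topological half. You assert that $\closed{\T}^\msf{GZ}$ "carries a quotient topology descending from the Ziegler structure". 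It does not, and it cannot: with the topology inherited from $\msf{KZg}^\otimes(\T)$ every point of $\closed{\T}$ is closed, so the space is $T_1$, whereas $\hspec{\T^\c}$ fails to be $T_1$ already for $\D(R)$ with $R$ of positive Krull dimension (this is exactly the obstruction recorded in \cref{subspaceT1}). The superscript $\msf{GZ}$ refers to a \emph{new}, Gabriel--Zariski (Zariski-dual) style topology on the same underlying set, whose basic \emph{open} sets are $[A]_\otimes=\{X:\iHom(A,X)=0\}$ for $A\in\T^\c$; only the underlying set, not the topology, is produced by the Ziegler/Kolmogorov construction. With the topology you specify, the theorem you are trying to prove is false in general, so the comparison step as planned would fail.

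Relatedly, your proposed matching of $\supph(a)$ with the classes meeting $\{X:\iHom(a,X)\neq 0\}$ is the right computation, but note that this set is a compact \emph{open} in the Ziegler $\otimes$-topology, so under your stated topology it would send closed sets to (traces of) opens; it is only because the $\msf{GZ}$-topology declares the complements $[a]_\otimes^c$ closed that this identification proves $\Phi$ is a closed map (\cref{closedcts}). Moreover, to make the identification representative-independent you need that $\iHom(a,E)=0$ for one indecomposable pure injective $E$ in the class if and only if it vanishes for all of them, in particular for $E_\mc{B}$ itself (this is \cref{ihomequality}); that fact rests not on "finite presentability of the functors parametrising the Ziegler topology" but on the simplicity of $\msf{Def}^\otimes(E_\mc{B})$, i.e.\ on \cref{simple} together with the observation that $\ker\iHom(a,-)$ is a $\otimes$-closed definable subcategory. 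So the missing ideas are: (i) the correct (Gabriel--Zariski) topology on $\closed{\T}$, and (ii) the representative-independence of $\iHom$-vanishing, which is where the maximality of homological primes enters the topological argument.
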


Motivating the construction of $\closed{\T}^\msf{GZ}$ requires some work, so we defer an explicit definition to later in the introduction. We now proceed to discuss the path to the above theorem.

In \cite{bks}, the foundation is laid on how to construct $\Phi$: associated to a homological prime $\mc{B}$ is a pure injective object $E_{\mc{B}}$, from which one can recover $\mc{B}$. Although in many cases it is known that $E_{\mc{B}}$ is an indecomposable object~\cite{BalmerCameron}, and thus an object of $\msf{Zg}(\T)$, whether $E_{\mc{B}}$ is always indecomposable is not known. Consequently, we cannot apriori construct a map from $\hspec{\T^{\c}}$ to $\msf{Zg}(\T)$. Yet the assignment $\mc{B}\mapsto E_{\mc{B}}$ does provide a faint sketch for how to proceed. The construction of $E_{\mc{B}}$ does not use all the properties of a homological prime - simply the fact that a homological prime is a Serre subcategory. Understanding which additional properties $E_{\mc{B}}$ is endowed with from the fact that $\mc{B}$ is a maximal Serre $\otimes$-ideal is the first step to constructing the map $\Phi$. 

The fact that homological primes are $\otimes$-ideals means that, instead of considering all definable subcategories, we should restrict attention to those which are $\otimes$-closed, and we give more details in \cref{deftensor}. As such, one may retopologise the Ziegler spectrum, with the closed sets in bijection with these $\otimes$-closed definable subcategories; we denote this space $\msf{Zg}^{\otimes}(\T)$, and we let $\msf{Def}^{\otimes}(\msf{X})$ denote the smallest $\otimes$-closed definable subcategory containing a set $\msf{X}$, which corresponds to the closure operation in this new topology. 

A crucial step in the construction of $\Phi$ is understanding $\msf{Def}^{\otimes}(E_{\mc{B}})$, and to do this we investigate the homological residue field corresponding to $\mc{B}$. If $\mc{B}$ is a homological prime, its associated homological residue field is the localisation
\[
\Mod{\T^{\c}}/\rlim\mc{B}.
\]
In general, understanding what the objects in this category look like is extremely difficult. Since the homological residue field is a locally coherent Grothendieck category, it is determined by its injective objects. In \cref{ddc} we investigate the relationship between the injective objects in this localisation and the pure injective objects in the definable category corresponding to $\mc{B}$. This culminates in the following result, which holds true for any Serre subcategory over any compactly generated triangulated category, that is, without any assumption of a closed monoidal structure.

\begin{thm*}[{\ref{determineinjs}}]
Let $\mc{S}$ be a Serre subcategory of $\mod{\T^\c}$, with $Q\colon\Mod{\T^{\c}}\to\Mod{\T^{\c}}/\rlim\mc{S}$ the associated localisation functor, and let $\mathscr{D}(\mc{S})$ be the unique definable subcategory of $\T$ corresponding to $\mc{S}$. Then the composition $Q\circ\bm{y}$ induces an equivalence of categories
\[
Q\circ\bm{y}\colon\mathscr{D}(\mc{S})\cap\msf{Pinj}(\T)\xrightarrow{\sim} \msf{Inj}(\Mod{\T^{\c}}/\rlim\mc{S})
\]
between the injective objects in $\Mod{\T^{\c}}/\rlim\mc{S}$ and the pure injective objects in $\mathscr{D}(\mc{S})$.
\end{thm*}

For context, when the homological residue field arises from a `true' residue field, Balmer-Cameron~\cite{BalmerCameron} and Cameron-Stevenson~\cite{CameronStevenson} have give an explicit form for homological residue fields in terms of comodules over a certain comonad. Our characterisation does not require this assumption, and applies uniformly to all homological residue fields, but only provides information about the injectives.

The following theorem builds on the above result to show how the maximality assumption on a homological prime enables us to characterise the definable $\otimes$-closure of the pure injective $E_{\mc{B}}$.

\begin{thm*}[{\ref{simple}}]
Let $\mc{B}\in\hspec{\T^{\c}}$. Then $\msf{Def}^{\otimes}(E_{\mc{B}})$ contains no proper nonzero $\otimes$-closed definable subcategories. In particular, if $X\in\msf{Def}^{\otimes}(E_{\mc{B}})$ is nonzero, then $\msf{Def}^{\otimes}(X)=\msf{Def}^{\otimes}(E_{\mc{B}})$.
\end{thm*}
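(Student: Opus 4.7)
The plan is to translate the problem through the bijection between $\otimes$-closed definable subcategories of $\T$ and Serre $\otimes$-ideals of $\mod{\T^\c}$, and then invoke the maximality of $\mc{B}$ directly. This bijection is mentioned in the introduction (\cref{deftensor}) and is the natural $\otimes$-equivariant refinement of the standard Ziegler correspondence used throughout the paper.

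The first step is to identify the Serre $\otimes$-ideal corresponding to $\msf{Def}^\otimes(E_{\mc{B}})$. By the construction in \cite{bks}, the pure injective $E_{\mc{B}}$ is chosen so that the Serre subcategory of $\mod{\T^\c}$ consisting of functors annihilating $E_{\mc{B}}$ is exactly $\mc{B}$. Hence the ordinary definable hull $\msf{Def}(E_{\mc{B}})$ agrees with the definable subcategory $\mathscr{D}(\mc{B})$ of $\T$ associated to $\mc{B}$; and since $\mc{B}$ is a $\otimes$-ideal, $\mathscr{D}(\mc{B})$ is automatically $\otimes$-closed. Consequently
\[
\msf{Def}^\otimes(E_{\mc{B}}) \; = \; \msf{Def}(E_{\mc{B}}) \; = \; \mathscr{D}(\mc{B}),
\]
which corresponds to $\mc{B}$ itself under the bijection.

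Next, suppose $\mc{D}' \subseteq \msf{Def}^\otimes(E_{\mc{B}})$ is a proper nonzero $\otimes$-closed definable subcategory, and let $\mc{S}'$ be its corresponding Serre $\otimes$-ideal. Since the bijection is inclusion-reversing, properness of the inclusion $\mc{D}' \subsetneq \msf{Def}^\otimes(E_{\mc{B}})$ gives $\mc{B} \subsetneq \mc{S}'$, and the assumption $\mc{D}' \neq 0$ forces $\mc{S}' \neq \mod{\T^\c}$. This contradicts the maximality of $\mc{B}$ as a Serre $\otimes$-ideal. The ``in particular'' statement is then immediate: for nonzero $X \in \msf{Def}^\otimes(E_{\mc{B}})$, the subcategory $\msf{Def}^\otimes(X)$ is a nonzero $\otimes$-closed definable subcategory of $\msf{Def}^\otimes(E_{\mc{B}})$, and thus must coincide with it.

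The main point requiring verification is the first: confirming that the Serre $\otimes$-ideal attached to $\msf{Def}^\otimes(E_{\mc{B}})$ is precisely $\mc{B}$, and in particular that no enlargement is introduced when passing from $\msf{Def}$ to $\msf{Def}^\otimes$. This should be a direct consequence of the defining property of $E_{\mc{B}}$ in \cite{bks} together with the fact that $\mc{B}$ is already a $\otimes$-ideal, but is the only step with real content; once it is in place, the argument is formal and the maximality of $\mc{B}$ does the remaining work.
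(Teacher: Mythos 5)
Your overall architecture --- identify the Serre $\otimes$-ideal attached to $\msf{Def}^{\otimes}(E_{\mc{B}})$ and then let the maximality of $\mc{B}$ do the work through the order-reversing bijection --- is exactly the paper's strategy, and your second paragraph (including the ``in particular'' part) is fine. The problem is the first step, which you yourself flag as the one with real content: as stated it is not only unjustified but false. It is not true that ``by construction'' the Serre subcategory of functors annihilating $E_{\mc{B}}$ is exactly $\mc{B}$, nor that $\msf{Def}^{\otimes}(E_{\mc{B}})=\msf{Def}(E_{\mc{B}})$. A plain definable hull $\msf{Def}(E_{\mc{B}})$ is in general not closed under shifts (i.e.\ under tensoring with $\Sigma\1$): for $\T=\D(k)$, $k$ a field, the unique homological prime is $\mc{B}=0$, $E_{\mc{B}}=k$ concentrated in degree $0$, and $\msf{Def}(k)$ consists of objects concentrated in degree $0$, which is strictly smaller than $\mathscr{D}(\mc{B})=\D(k)$; so $\{f\in\mod{\T^\c}:\Hom(f,\bm{y}E_{\mc{B}})=0\}$ strictly contains $\mc{B}$ here (e.g.\ $f=\bm{y}(k[1])$). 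What \cite[Theorem~3.5]{bks} actually gives is the tensor-kernel description $\mc{B}=\msf{ker}(\bm{y}E_{\mc{B}}\otimes -)\cap\mod{\T^\c}$, which is a different statement from the Hom-annihilator you invoke; indeed the equality $\mathscr{S}(\msf{Def}^{\otimes}(E_{\mc{B}}))=\mc{B}$ is essentially the content of the proposition being proved, so it cannot be taken as an input.

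The good news is that your maximality argument only needs one containment, namely $E_{\mc{B}}\in\mathscr{D}(\mc{B})$, equivalently $\mc{B}\subseteq\mathscr{S}(\msf{Def}^{\otimes}(E_{\mc{B}}))$: given a nonzero proper $\otimes$-closed definable $\mc{D}'\subsetneq\msf{Def}^{\otimes}(E_{\mc{B}})$, its Serre $\otimes$-ideal $\mc{S}'$ then satisfies $\mc{B}\subsetneq\mc{S}'\subsetneq\mod{\T^\c}$, contradicting maximality, and this also yields the equality $\msf{Def}^{\otimes}(E_{\mc{B}})=\mathscr{D}(\mc{B})$ a posteriori. But that containment does require an argument, and it is exactly what the paper supplies: $\bm{y}E_{\mc{B}}=RE(Q\bm{y}\1)$ lies in $(\rlim\mc{B})^{\perp}$ by \cref{quotientisperp}, and then \cref{injcogen} gives $E_{\mc{B}}\in\mathscr{D}(\mc{B})$; concretely, for $f\in\mc{B}$ one has $\Hom(f,\bm{y}E_{\mc{B}})\cong\Hom(Qf,E(Q\bm{y}\1))=0$ since $Qf=0$. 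With that inserted in place of your first step, your proof closes and coincides with the paper's.
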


Since $\msf{Def}^{\otimes}$ corresponds to the closure operation on $\msf{Zg}^{\otimes}(\T)$, we see that once we pass to the Kolmogorov quotient $\msf{KZg}^\otimes(\T)$, the pure injective $E_{\mc{B}}$ can be replaced by any indecomposable pure injective in its definable closure.

The above theorem enables us to construct a map of sets \[\Phi\colon \hspec{\T^\c} \to \msf{KZg}^\otimes(\T),\] see \cref{map}. However, this map is far from surjective, as $\msf{KZg}^\otimes(\T)$ is, in general, substantially larger than $\hspec{\T^{\c}}$, as we illustrate in the example of $\msf{D}(\Z)$ (cf., \cref{specZ}). It is at this point where we once again use the \ti{maximality} of the points in $\hspec{\T^{\c}}$. This ensures that $\Phi$ factors over
\[
\closed{\T}:=\msf{Cl}(\msf{KZg}^\otimes(\T)),
\]
the closed points of $\msf{KZg}^\otimes(\T)$ equipped with the quotient topology. Using this observation, in \cref{hspecandKQ}, we prove that $\Phi\colon\hspec{\T^\c} \to \closed{\T}$ is a bijection.

However, initial hopes that $\Phi$ could be a homeomorphism are dashed, for, equipped with its usual subspace topology, $\closed{\T}$ is a $T_{1}$-space, while $\hspec{\T^{\c}}$ only has this property in the most restrictive circumstances. Consequently, to obtain a homeomorphism through $\Phi$, we must construct a new topology on $\closed{\T}$.

It is here where we must be careful, for there are two sides to the topological coin when considering the set of indecomposable pure injective objects in $\T$. Firstly there is the Ziegler topology, as stated above, but one can also consider the Zariski topology. What we must keep in mind is that the topology on $\hspec{\T^{\c}}$ is firmly on the Zariski side, while the topologies we have so far been considering have been Ziegler.

In \cref{sec:topologies} we introduce two Zariski-style topologies on $\closed{\T}$ which will be familiar to those who have studied spectra of locally coherent Grothendieck categories. The first topology which we denote by $\closed{\T}^\vee$ is that induced by the $\otimes$-closed Gabriel-Zariski topology on $\Mod{\T^{\c}}$. This is the Hochster dual of the $\otimes$-closed Ziegler topology, thus flipping the aforementioned topological coin. The second topology which we denote by $\closed{\T}^\msf{GZ}$ is also on the Zariski side - it is that induced by the $\otimes$-closed Gabriel-Zariski topology on $\Flat{\T^{\c}}$, see \cref{GZperspective} for further discussion. In particular, both topologies we introduce on $\closed{\T}$ arise perfectly naturally from the perspective of the Gabriel-Zariski topology on Grothendieck categories. It is then an elementary calculation (cf., \cref{closedcts}) to verify that the map $\Phi\colon \hspec{\T^\c} \to \closed{\T}^\msf{GZ}$ is closed and continuous. Combining this with the aforementioned bijection, one obtains that $\Phi$ is a homeomorphism as desired.

We then apply this to give a new perspective on the relationship between the homological spectrum and the Balmer spectrum. Since the Balmer spectrum is universal, there is a continuous map \[\phi\colon \hspec{\T^\c} \to \msf{Spc}(\T^\c)\] for any rigidly-compactly generated tensor-triangulated category $\T$, and moreover this map is always surjective~\cite[Corollary 3.9]{Balmernilpotence}. Even more, $\phi$ is a bijection in all known examples~\cite[\S 5]{Balmernilpotence}, which led Balmer to conjecture that it is always a bijection.  As such, injectivity is the missing piece of the jigsaw, and Barthel-Heard-Sanders~\cite{BHScomparison} proved that this holds if and only if the homological spectrum is $T_0$. We also give an alternative proof of this, see~\cref{T0comparison}. From the point of view of this paper, Balmer's conjecture then can be reduced to understanding the properties of \emph{simple} $\otimes$-closed definable subcategories, see~\cref{seperation} for further discussion.

Lastly, we finish the paper with a detailed example in \cref{sec:example}, where we consider $\D(\Z)$. Firstly, we describe the topological space $\msf{Zg}^{\otimes}(\D(\Z))$. Following this, we go through the above process to identify the topological space $\closed{\D(\Z)}^{\msf{GZ}}$, before giving an explicit homeomorphism between $\closed{\D(\Z)}^{\msf{GZ}}$ and $\msf{Spc}(\D(\Z)^{\c})$, the Balmer spectrum of $\D(\Z)$. This gives a concrete proof that the homological and Balmer spectrum are homeomorphic in this example.

The results of this paper have been utilised in~\cite{definablefunctors} to provide an alternative approach to functoriality of the homological spectrum. This perspective has allowed us to prove functoriality under weaker hypotheses, thereby expanding a result of Balmer~\cite{Balmerhomsupp} beyond geometric functors. 

\subsection*{Acknowledgements}
We are particularly grateful to Mike Prest for his insightful comments and suggestions, as well as for sharing a preliminary version of \cite{PrestWagstaffe} with us. We also thank Henning Krause and Sebastian Opper for their feedback on preliminary versions of this paper, and the anonymous referee for their helpful comments.
Both authors were supported by the project PRIMUS/23/SCI/006 from Charles University, and by the Charles University
Research Center program UNCE/SCI/022.

\section{Purity and definability}\label{backgroundsection}

Throughout this paper we will use the tools of purity and definability in compactly generated triangulated categories and their functor categories. Here we recall the salient definitions and facts which are used throughout, and refer the reader to~\cite{krsmash, krcoh, psl, dac} for more details. 

\subsection{Modules} 
If $\T$ is a compactly generated triangulated category, we let $\T^{\c}$ denote the full subcategory of compact objects, and $\Mod{\T^{\c}}:=\msf{Add}((\T^{\c})^{\op},\ab)$ denote the category of additive functors $(\T^{\c})^{\op}\to\ab$. The \ti{restricted Yoneda embedding}
\[
\bm{y}\colon\T\to\Mod{\T^{\c}}
\]
\[
X\mapsto\Hom_{\T}(-,X)\vert_{\T^{\c}}
\]
is the central tool in relating purity in $\T$ and $\Mod{\T^{\c}}$. The latter is a locally coherent Grothendieck category, whose finitely presented objects are the finitely presented functors, that is, those $f\in\Mod{\T^{\c}}$ which have a presentation of the form
\[
\bm{y}A\to\bm{y}B\to f\to 0
\]
where $A,B\in\T^{\c}$. We write $\mod{\T^\c}$ for the full subcategory of finitely presented functors.
By the usual Yoneda lemma, there is a bijection between $\Hom_{\Mod{\T^{\c}}}(\bm{y}A,\bm{y}B)$ and $\Hom_{\T}(A,B)$ whenever $A$ and $B$ are compact, and thus any $f\in\mod{\T^{\c}}$ is uniquely determined (up to isomorphism) by a map $\alpha\in\Hom_{\T}(A,B)$; more explicitly $f\in\mod{\T^{\c}}$ if and only if $f=\msf{coker}(\bm{y}\alpha)$ for some $\alpha\in\msf{Mor}(\T^{\c})$.

In terms of covariant functors, rather than considering finitely presented functors $\T^{\c}\to\ab$, it is more beneficial to consider the category of \ti{coherent functors}, which we denote by $\msf{Coh}(\T)$. Introduced in \cite{krcoh}, $\msf{Coh}(\T)$ is the full subcategory of $\msf{Add}(\T, \ab)$ consisting of the functors $F\colon\T\to\ab$ which have a presentation of the form
\[
\Hom_{\T}(A,-)\to\Hom_{\T}(B,-)\to F\to 0
\]
where $A,B \in \T^\c$.
Again, it is clear that $F\colon\T\to\ab$ is coherent if and only if $F\simeq\msf{coker}(\Hom_{\T}(\alpha,-))$ for some $\alpha\in\msf{Mor}(\T^{\c})$.
\subsection{Purity and definability} 
Since the restricted Yoneda embedding $\bm{y}$ is cohomological, it sends triangles to long exact sequences. If one picks out the triangles whose image, under $\bm{y}$, is a \ti{short} exact sequence, one obtains the \ti{pure triangles}. More explicitly, a triangle $X\to Y\to Z\to \Sigma X$ is pure if 
\[
0\to \bm{y}X\to\bm{y}Y\to\bm{y}Z\to 0
\]
is exact in $\Mod{\T^{\c}}$. In this case, we call the map $X\to Y$ a pure monomorphism, and say $X$ is a pure subobject of $Y$. We similarly say $Y\to Z$ is a pure epimorphism and $Z$ is a pure quotient of $Y$.

We are now in a position to recall the definition of definable subcategories of triangulated categories.

\begin{defn}
Let $\mc{D}$ be a full subcategory of $\T$. We say that $\mc{D}$ is \emph{definable} if there is a set of coherent functors $\{F_i\}_{I}\subseteq\msf{Coh}(\T)$ such that 
\[
\mc{D} = \{X \in \T : F_i(X) = 0 \text{ for all $i \in I$}\}.
\]
We note that definable subcategories are always closed under pure subobjects and direct products: indeed, this follows from the definition, as coherent functors preserve products, and send pure triangles to short exact sequences by~\cite[Theorem A]{krcoh}.
\end{defn}

Recall from \cite[Theorem 1.8]{krsmash} that an object $X\in\T$ is pure injective provided any of the following equivalent conditions hold:
\begin{enumerate}
\item the object $\bm{y}X$ is an injective object in $\Mod{\T^{\c}}$;
\item the natural map $\Hom_{\T}(M,X)\to\Hom_{\Mod{\T^{\c}}}(\bm{y}M,\bm{y}X)$ is a bijection for all $M\in\T$;
\item any pure monomorphism $X\to Y$ splits.
\end{enumerate}
We write $\msf{Pinj}(\T)$ for the full subcategory of pure injective objects in $\T$. In fact, as shown in \cite[Corollary 1.9]{krsmash}, $\bm{y}$ induces an equivalence of categories between $\msf{Pinj}(\T)$ and $\msf{Inj}(\T^{\c}):=\msf{Inj}(\Mod{\T^{\c}})$, where the latter is the category of injective objects in $\Mod{\T^{\c}}$. There is only a set of indecomposable injective objects in $\Mod{\T^{\c}}$, which is denoted $\msf{inj}(\T^{\c})$, see \cite[Lemma E.1.10]{psl}. Thus there is also only a set of indecomposable pure injective objects in $\T$, which we denote by $\msf{pinj}(\T)$. 

\subsection{The fundamental correspondence} 
As we have seen in the definition, subcategories of coherent functors determine definable subcategories. In this subsection we recall parts of the fundamental correspondence of \cite{krcoh}, which show that this determination is actually unique. 

There is an order-reversing bijection
\begin{equation}\label{SerreDefCoh}
\begin{tikzcd}
\{\text{definable subcategories of $\T$}\} \ar[rr, yshift=1mm, "\mathscr{S}_\mrm{c}(-)"] & & \{\text{Serre subcategories of $\msf{Coh}(\T)$}\} \ar[ll, yshift=-1mm, "\mathscr{D}_\mrm{c}(-)"]
\end{tikzcd}
\end{equation}
where the maps are defined by
\[\mathscr{S}_\mrm{c}(\mc{D}) = \{F \in \msf{Coh}(\T) : FX = 0 \text{ for all $X \in \mc{D}$}\}\] and
\[\mathscr{D}_\mrm{c}(\mc{S}) = \{X \in \T : FX = 0 \text{ for all $F \in \mc{S}$}\}.\]

One can also relate this to $\mod{\T^\c}$ via an antiequivalence introduced in \cite[\S 7]{krcoh} as we now recall. There is a functor $(-)^{\vee}\colon\Mod{\T^{\c}}^\op\to (\T,\ab)$ defined by 
\begin{equation}\label{krdual}
F^{\vee}(X):=\Hom(F,\bm{y}X).
\end{equation}
It is shown in~\cite[Lemma 7.2]{krcoh} that $(-)^{\vee}$ restricts to an exact antiequivalence $\mod{\T^{\c}}\to\msf{Coh}(\T)$. More explicitly, if $f\in\mod{\T^{\c}}$ can be realised as the cokernel of $\bm{y}\alpha$, then $f^{\vee}$ can be realised as the cokernel of $\Hom_\T(\Sigma A,-) \to \Hom_\T(\msf{cone}(\alpha),-)$. Next we give an explicit form for the inverse.

\begin{lem}\label{krdualinverse}
If $G\in\msf{Coh}(\T)$, then 
\[
G^{\vee}(X):=\Hom(G,\msf{h}X)
\]
is a finitely presented functor, where $\msf{h}\colon (\T^\c)^\op \to \msf{Coh}(\T)$ is defined by $\msf{h}X = \Hom_\T(X,-)$. Moreover, $(-)^{\vee}\colon\msf{Coh}(\T)^\op\to\msf{mod}(\T^{\c})$ and $(-)^{\vee}\colon\mod{\T^{\c}}\to\msf{Coh}(\T)^\op$ are mutually inverse.
\end{lem}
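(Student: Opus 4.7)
The plan is to extract a presentation of $G^\vee$ in $\mod{\T^\c}$ from a presentation of $G$ and then to verify the mutual inversion directly on generators. Start with $G \in \msf{Coh}(\T)$ and a presentation $\Hom_\T(A,-) \xrightarrow{\Hom_\T(\alpha,-)} \Hom_\T(B,-) \to G \to 0$ corresponding to a morphism $\alpha \colon B \to A$ in $\T^\c$. Applying the left-exact functor $\Hom(-,\msf{h}X)$ and using the (standard) Yoneda lemma $\Hom(\Hom_\T(C,-),\msf{h}X) \simeq \Hom_\T(X,C) = \bm{y}C(X)$, the resulting left-exact sequence identifies
\[
G^\vee(X) = \ker\bigl(\bm{y}\alpha\colon \bm{y}B \to \bm{y}A\bigr)(X)
\]
for every $X \in \T$. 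Completing $\alpha$ to a triangle $\Sigma^{-1}\msf{cone}(\alpha) \to B \xrightarrow{\alpha} A \to \msf{cone}(\alpha)$ in $\T^\c$ and feeding it into the cohomological functor $\bm{y}$ then realizes this kernel as
\[
G^\vee \simeq \msf{coker}\bigl(\bm{y}\Sigma^{-1}A \to \bm{y}\Sigma^{-1}\msf{cone}(\alpha)\bigr),
\]
which is a finitely presented object of $\Mod{\T^\c}$ (since $\Sigma^{-1}A$ and $\Sigma^{-1}\msf{cone}(\alpha)$ lie in $\T^\c$). This establishes that $(-)^\vee$ takes values in $\mod{\T^\c}$.

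For the inversion, I would verify both composites on the two generating families of presentations. For $F = \msf{coker}(\bm{y}\alpha)$ with $\alpha \colon A \to B$ in $\T^\c$, the recalled formula gives
\[
F^\vee \simeq \msf{coker}\bigl(\Hom_\T(\Sigma A,-) \to \Hom_\T(\msf{cone}(\alpha),-)\bigr),
\]
with the connecting map induced by the boundary $\msf{cone}(\alpha) \to \Sigma A$ of the triangle on $\alpha$. Applying the newly defined $(-)^\vee$ to this presentation and running the same Yoneda argument as above yields $(F^\vee)^\vee(X) = \ker(\bm{y}\msf{cone}(\alpha) \to \bm{y}\Sigma A)(X)$, and the long exact sequence produced by $\bm{y}$ on the triangle $A \xrightarrow{\alpha} B \to \msf{cone}(\alpha) \to \Sigma A$ identifies this kernel precisely with $\msf{coker}(\bm{y}\alpha) = F$. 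The reverse composite $(G^\vee)^\vee \simeq G$ is checked by the same template: rotating the defining triangle so that $\msf{cone}(\Sigma^{-1}A \to \Sigma^{-1}\msf{cone}(\alpha)) \simeq B$ with connecting map $\alpha \colon B \to A$, we recover the original presentation of $G$.

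Finally, I would promote these identifications to natural isomorphisms. Each step above is the unique presentation-free construction that Yoneda applied to a cokernel presentation allows, so functoriality and naturality in $F$ and $G$ follow in the standard way from naturality of the Yoneda isomorphism. Combined with the already-established antiequivalence of \cite[Lemma 7.2]{krcoh}, this exhibits the two $(-)^\vee$ constructions as mutual quasi-inverses.

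The main obstacle I expect is bookkeeping: being careful that the map on $\Hom_\T(X,B) \to \Hom_\T(X,A)$ produced by $\Hom(-,\msf{h}X)$ really does agree with $\bm{y}\alpha$ (and not a dual or shifted variant), and that the rotations of the triangles used in the two composites are compatible with the boundary maps appearing in the presentations of $F^\vee$ and $G^\vee$. Sign issues in rotations are harmless up to isomorphism, but the directional conventions are the place where errors would otherwise creep in.
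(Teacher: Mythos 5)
Your proposal is correct and follows essentially the same route as the paper: apply $\Hom(-,\msf{h}X)$ to a presentation of $G$, use the Yoneda lemma to identify $G^{\vee}(X)$ with a kernel, and then use the (de)suspended cone of the representing map to realise it as a cokernel of a map between representables, hence finitely presented. The only difference is that you verify both composites explicitly, whereas the paper simply matches the formula with the inverse already described in \cite[Lemma 7.2]{krcoh}; both are fine.
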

\begin{proof}
Suppose that $G \in \msf{Coh}(\T)$ has a presentation 
\[
\Hom_\T(B,-)\to \Hom_\T(A,-)\to G\to 0,
\]
with $A,B\in\T^{\c}$. Applying $\Hom(-,\msf{h}X)$ to this presentation, and using the Yoneda lemma, we see that the sequence
\[
0\to \Hom(G,\msf{h}X)\to\Hom_\T(X,A)\to\Hom_\T(X,B)
\]
is exact. By considering the cocone of the map $A \to B$ representing $G$, we see that $\Hom(G,\msf{h}X)$ is precisely the inverse of the $(-)^{\vee}$ as described in \cite[Lemma 7.2]{krcoh}.
\end{proof}

\begin{rem}\label{orderpreserving}
Note that if $\mc{A}$ is a Serre subcategory of $\mod{\T^{\c}}$, then $\mc{A}^{\vee\vee}=\mc{A}$. As such, if $\mc{A}\subseteq\mc{B}$ is an inclusion of Serre subcategories, then we have $\mc{A}^{\vee}\subseteq\mc{B}^{\vee}$. Indeed, if $F \in \mc{A}^\vee$, then $F^{\vee}\in \mc{A}^{\vee\vee}=\mc{A}\subseteq \mc{B}$. So $F^{\vee\vee}=F\in \mc{B}^{\vee}$. 
\end{rem}

By combining the bijection of \cref{SerreDefCoh} with the exact antiequivalence $(-)^\vee$, we obtain a bijection
\begin{equation}\label{SerreDef}
\begin{tikzcd}
\{\text{definable subcategories of $\T$}\} \ar[rr, yshift=1mm, "\mathscr{S}(-)"] & & \{\text{Serre subcategories of $\mod{\T^\c}$}\} \ar[ll, yshift=-1mm, "\mathscr{D}(-)"]
\end{tikzcd}
\end{equation}
given by
\[\mathscr{S}(\mc{D}) := (\mathscr{S}_\mrm{c}(\mc{D}))^\vee = \{F^\vee : F \in \msf{Coh}(\T) \text{ such that $FX = 0$ for all $X \in \mc{D}$}\}\] and
\[\mathscr{D}(\mc{S}) := \mathscr{D}_\mrm{c}(\mc{S}^\vee) = \{X \in \T : f^\vee X = 0 \text{ for all $f \in \mc{S}$}\}.\]

Since $(-)^\vee$ is order-preserving by \cref{orderpreserving}, and $\mathscr{S}_\mrm{c}(-)$ and $\mathscr{D}_\mrm{c}(-)$ are order-reversing, we see that the bijections $\mathscr{S}(-)$ and $\mathscr{D}(-)$ are order-reversing.

\subsection{The Ziegler spectrum} 
We next recall the definition of the Ziegler spectrum of $\T$. This is the topological space whose points are the (non-zero) indecomposable pure injectives of $\T$, and whose closed sets are of the form $\mc{D}\cap\msf{pinj}(\T)$, where $\mc{D}$ is a definable subcategory of $\T$. We denote this space by $\msf{Zg}(\T)$. 

A key feature of definable subcategories is that they are uniquely determined by the indecomposable pure injective objects contained within them. There is a bijection between definable subcategories $\mc{D}$ of $\T$ and closed subsets of $\msf{Zg}(\T)$, see the statement of the fundamental correspondence in~\cite{krcoh} for more details. From this bijection, one sees that if $\mc{D}$ is a definable subcategory, then for any $X\in\T$ we have
\begin{equation}\label{purelyembedsinproduct}
X\in\mc{D}\iff \t{there is a pure monomorphism }X\to\prod_{I}E_{i}
\end{equation}
where $E_{i}\in\mc{D}\cap\msf{pinj}(\T)$. As a consequence of this, one immediately obtains the following crucial relationship between definable subcategories of $\T$ and pure injective objects: if $\mc{D}\subseteq\T$ is definable, then
\[
\mc{D}=\msf{Def}(\mc{D}\cap\msf{pinj}(\T))
\]
where $\msf{Def}(\msf{X})$ is the smallest definable subcategory containing $\msf{X}$ when $\msf{X}\subseteq\T$ is any class. In particular, for definable $\mc{D}_{1}$ and $\mc{D}_{2}$, we have $\mc{D}_{1}=\mc{D}_{2}$ if and only if $\mc{D}_{1}\cap\msf{pinj}(\T)=\mc{D}_{2}\cap\msf{pinj}(\T)$.

\subsection{Definability along the Yoneda embedding}
So far we have only considered definability in $\T$, but we also need to consider it in $\Mod{\T^{\c}}$, and occasionally, more generally in a finitely accessible category $\A$ with products. A short exact sequence $0\to L\to M\to N\to 0$ in such an $\A$ is \ti{pure exact} if 
\[
0\to\Hom_{\A}(A,L)\to\Hom_{\A}(A,M)\to\Hom_{\A}(A,N)\to 0
\]
is an exact sequence in $\ab$ for any $A\in\msf{fp}(\A)$, where $\msf{fp}(\A)$ denotes the skeletally small subcategory of finitely presented objects in $\A$. We then say $L\to M$ is a pure monomorphism, $L$ is a pure subobject of $M$, $M \to N$ is a pure epimorphism, and $N$ is a pure quotient of $M$. An object $X\in\A$ is pure injective if every pure monomorphism $X\to Y$ splits.

Much of the theory of definability recalled for $\T$ above also holds for a finitely accessible category $\A$ with products. A full subcategory $\mc{D}\subseteq\A$ is definable if and only if there is a set of finitely presented functors $\{f_{i}\}\subseteq\mod{\msf{fp}(\A)^\mrm{op}}$ such that $\mc{D}=\{X\in\A:\bar{f_{i}}X=0 \t{ for all }i\in I\}$, where $\bar{f_{i}}$ is the unique direct limit preserving extension of $f_i$ to $\A$. This is equivalent to $\mc{D}$ being closed under filtered colimits, direct products, and pure subobjects, or equivalently, being closed under direct products, pure subobjects, and pure quotients. We refer the reader to \cite{psl} or \cite{dac} for a comprehensive consideration of definability in finitely accessible categories.

One particular definable subcategory of $\Mod{\T^{\c}}$, which will be frequently used, is that of the flat functors, which we denote by $\Flat{\T^{\c}}$. The following gives a complete description of objects in $\Flat{\T^{\c}}$.

\begin{prop}[{\cite[\S 2.3]{krsmash}}]\label{flatequivalences}
Let $F\in\Mod{\T^{\c}}$. The following are equivalent:
\begin{enumerate}
\item $F\in\Flat{\T^{\c}}$;
\item $F$ is cohomological;
\item $F$ is \ti{fp-injective}, that is $\mrm{Ext}^{1}(g,f)=0$ for all $g\in\mod{\T^{\c}}$;
\item $F$ is an ind-object over $(\T^{\c})^{\op}$, i.e., $F \simeq \rlim_I\bm{y}A_i$ where each $A_i$ is compact and $I$ is filtered.
\end{enumerate}
\end{prop}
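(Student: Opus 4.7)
The plan is to prove the four conditions equivalent via the implications $(4) \Rightarrow (1)$, $(4) \Rightarrow (2)$, $(2) \Leftrightarrow (3)$, and $(1) \Rightarrow (4)$. The indispensable technical input is that every $g \in \mod{\T^\c}$ admits a projective resolution by representables: writing $g = \msf{coker}(\bm{y}\alpha)$ for some $\alpha \colon A \to B$ in $\T^\c$ and completing $\alpha$ to a triangle $A \to B \to C \to \Sigma A$, the cohomological character of each $\Hom_\T(-, X)\vert_{\T^\c}$ on the rotated triangles yields the exact sequence
\[
\cdots \to \bm{y}\Sigma^{-1}B \to \bm{y}\Sigma^{-1}C \to \bm{y}A \to \bm{y}B \to g \to 0
\]
in $\Mod{\T^\c}$, in which each $\bm{y}X$ is projective.

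The two easy implications come first. For $(4) \Rightarrow (1)$ and $(4) \Rightarrow (2)$, note that representables are projective (hence flat) and cohomological, being of the form $\Hom_\T(-, A)\vert_{\T^\c}$; both properties are preserved by filtered colimits in $\ab$. For $(1) \Rightarrow (4)$ I would invoke the Lazard-style theorem for functor categories, namely that any flat functor on a small additive category is a filtered colimit of representables.

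The heart of the matter is $(2) \Leftrightarrow (3)$. Applying $\Hom_{\Mod{\T^\c}}(-, F)$ to the projective resolution above and invoking the Yoneda lemma produces the complex
\[
0 \to \Hom(g, F) \to F(B) \to F(A) \to F(\Sigma^{-1}C) \to F(\Sigma^{-1}B) \to \cdots,
\]
whose higher cohomology groups compute $\mrm{Ext}^i(g, F)$. When $F$ is cohomological, this complex is exactly the image under $F$ of the long exact sequence associated to the iteratively rotated triangles, hence exact, which forces $\mrm{Ext}^i(g, F) = 0$ for all $i \geq 1$; in particular $F$ is fp-injective. Conversely, if $F$ is fp-injective, exactness of this complex as $\alpha$ ranges over morphisms in $\T^\c$ recovers the cohomological property of $F$ on every triangle (up to rotation). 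The main obstacle will be the index-bookkeeping: one must verify that the kernel/image identifications in the projective resolution genuinely arise from the rotated triangles and transport cleanly through $\Hom(-, F)$. Once $(2) \Leftrightarrow (3)$ is in hand, the cycle $(1) \Leftrightarrow (4) \Rightarrow (2) \Leftrightarrow (3)$ closes the equivalence, with $(3) \Rightarrow (2) \Rightarrow (4) \Rightarrow (1)$ providing the return path.
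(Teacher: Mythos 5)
Your individual implications are fine, but as written the logic does not close up. You establish $(4)\Rightarrow(1)$, $(1)\Rightarrow(4)$, $(4)\Rightarrow(2)$ and $(2)\Leftrightarrow(3)$; this only gives that $\{(1),(4)\}$ implies $\{(2),(3)\}$. The ``return path $(3)\Rightarrow(2)\Rightarrow(4)\Rightarrow(1)$'' you invoke at the end uses $(2)\Rightarrow(4)$, which is nowhere among the implications you prove and is not a formal consequence of them --- it is in fact the substantive direction of the proposition (that a cohomological, equivalently fp-injective, functor is a filtered colimit of representables), and it genuinely uses the triangulated structure. (For comparison, the paper gives no argument at all, citing Krause's \S 2.3 of the smashing-subcategories paper, where precisely this direction is the point.)

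The gap is fixable with one more step in the spirit of your resolution. Given $g\in\mod{\T^\c}$, write $g=\msf{coker}(\bm{y}\alpha)$ for $\alpha\colon A\to B$ in $\T^\c$ and complete to a triangle $A\xrightarrow{\alpha}B\xrightarrow{\beta}C\to\Sigma A$; exactness of $\bm{y}A\to\bm{y}B\to\bm{y}C$ in the middle shows $g\cong\mrm{im}(\bm{y}\beta)$, so there is a short exact sequence $0\to g\to\bm{y}C\to h\to 0$ with $h=\msf{coker}(\bm{y}\beta)$ again finitely presented. If $F$ is fp-injective, then $\mrm{Ext}^1(h,F)=0$ shows that every morphism $g\to F$ extends along $g\hookrightarrow\bm{y}C$, i.e.\ factors through a representable. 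By the standard criterion for lying in the closure of the representables under filtered colimits (every map from a finitely presented object factors through a representable), this gives $(3)\Rightarrow(4)$ and closes the equivalence; your $(1)\Leftrightarrow(4)$ via the Lazard--Oberst--R\"ohrl theorem and your $(2)\Leftrightarrow(3)$ computation, whose index bookkeeping is indeed correct, then do the rest.
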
 
The fact that $\Flat{\T^{\c}}$ is definable in $\Mod{\T^{\c}}$ requires justification. As $\T^{\c}$ is triangulated, it has both weak kernels and weak cokernels, and is also idempotent complete (e.g., see~\cite[Proposition 1.6.8]{Neemantri}). Thus $\Flat{\T^\c}$ is definable~\cite[Theorem 6.1(a)]{dac}. 

We also see from \cref{flatequivalences} that $\Flat{\T^{\c}}$ is a finitely accessible category with products, and we denote its finitely presented objects by $\flat{\T^{\c}}$. We note that $\Flat{\T^\c} \cap \mod{\T^\c} = \flat{\T^\c}$ by~\cite[1.3]{CrawleyBoevey}. It follows that $\bm{y}$ induces a bijection $\T^{\c}\xrightarrow{\sim}\flat{\T^{\c}}$. We may also consider definable subcategories of $\Flat{\T^{\c}}$, which prove to be of vital importance.

We now show that, in relation to understanding purity, there is no difference between looking in $\T$ or in $\Flat{\T^{\c}}$. The following result is almost certainly well known, and is not difficult, but we explicitly include it for referencing purposes. 

\begin{lem}\label{purityflats}
Let $\T$ be a compactly generated triangulated category.   
\begin{enumerate}[label=(\arabic*)]
\item\label{purityflats0} A functor $X \in \Mod{\T^\c}$ is flat and pure injective if and only if it is injective.
\item\label{purityflats1} The restricted Yoneda embedding induces a bijection $\msf{Pinj}(\T)\to\msf{Pinj}(\Flat{\T^{\c}})$.

\item\label{purityflats2} For a definable subcategory $\mc{D}$ of $\T$, we have $\bm{y}(\mc{D} \cap \msf{pinj}(\T)) = \msf{Def}(\bm{y}\mc{D}) \cap \msf{pinj}(\Flat{\T^{\c}})$.

\item\label{purityflats3} There is a bijection between definable subcategories of $\T$ and those of $\msf{Flat}(\T^{\c})$.
\end{enumerate}
\end{lem}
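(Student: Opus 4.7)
For (1), I would first apply \cref{flatequivalences} to identify flatness with fp-injectivity in $\Mod{\T^\c}$. The key input is that in the locally coherent Grothendieck category $\Mod{\T^\c}$, every short exact sequence $0 \to X \to Y \to Z \to 0$ starting from an fp-injective $X$ is automatically pure exact: writing $Z = \rlim_i Z_i$ with $Z_i \in \mod{\T^\c}$, forming the pullbacks $Y_i = Y \times_Z Z_i$, each resulting sequence $0 \to X \to Y_i \to Z_i \to 0$ splits by fp-injectivity, and exactness of filtered colimits in the Grothendieck category realises the original sequence as a filtered colimit of split sequences, hence pure. Assuming additionally that $X$ is pure injective then makes every such monomorphism split, which gives injectivity; the converse is immediate.

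For (2), the text preceding the lemma already records that $\bm{y}$ induces an equivalence $\msf{Pinj}(\T) \simeq \msf{Inj}(\Mod{\T^\c})$, so the task reduces to the equality $\msf{Pinj}(\Flat{\T^\c}) = \msf{Inj}(\Mod{\T^\c})$ inside $\Mod{\T^\c}$. I would invoke the standard feature of definable subcategories that for a definable subcategory $\mathsf{A}$ of a locally finitely presented Grothendieck category $\mathsf{B}$, pure injectivity in $\mathsf{A}$ coincides with pure injectivity in $\mathsf{B}$ together with membership in $\mathsf{A}$ (this uses that the pure injective envelope of an object in $\mathsf{B}$ lies in its definable closure, hence inside $\mathsf{A}$). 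Applied to $\Flat{\T^\c} \subseteq \Mod{\T^\c}$ this yields $\msf{Pinj}(\Flat{\T^\c}) = \msf{Pinj}(\Mod{\T^\c}) \cap \Flat{\T^\c}$, which by (1) is precisely $\msf{Inj}(\Mod{\T^\c})$ since injectives are automatically flat.

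The heart of (3) and (4) is an identification between $\msf{Coh}(\T)$ and the category of finitely presented functors on $\Flat{\T^\c}$. To a coherent $G \in \msf{Coh}(\T)$ with presentation $\msf{h}B \to \msf{h}A$ (with $A, B \in \T^\c$) I would associate the finitely presented functor $\bar{G}$ on $\Flat{\T^\c}$ presented by $\Hom_{\Flat{\T^\c}}(\bm{y}B, -) \to \Hom_{\Flat{\T^\c}}(\bm{y}A, -)$. Yoneda gives $\Hom_{\Flat{\T^\c}}(\bm{y}A, \bm{y}X) = \Hom_\T(A, X) = \msf{h}A(X)$, so $\bar{G}(\bm{y}X) = G(X)$ for every $X \in \T$, and the assignment $G \mapsto \bar{G}$ is a bijection since $\bm{y}$ restricts to an equivalence $\T^\c \xrightarrow{\sim} \flat{\T^\c}$. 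Consequently, a coherent functor vanishes on a definable subcategory $\mc{D}$ if and only if its counterpart vanishes on $\bm{y}\mc{D}$, which gives $X \in \mc{D}$ if and only if $\bm{y}X \in \msf{Def}(\bm{y}\mc{D})$. Part (3) then follows by combining this with the bijection on indecomposable pure injectives from (2), and (4) is then immediate: the desired bijection is $\mc{D} \mapsto \msf{Def}(\bm{y}\mc{D})$ with inverse $\mc{E} \mapsto \bm{y}^{-1}(\mc{E})$, the latter being definable precisely because the fp functors defining $\mc{E}$ correspond to coherent functors on $\T$. The main obstacle is setting up the coherent/fp functor correspondence cleanly; once it is in place, (3) and (4) follow formally.
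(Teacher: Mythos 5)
Your proposal is correct, and its overall architecture matches the paper's: reduce (2) to (1), deduce (3) from the bijection on pure injectives plus the fact that membership in $\mc{D}$ can be tested by membership of $\bm{y}X$ in $\msf{Def}(\bm{y}\mc{D})$, and obtain (4) formally from the preceding parts. The difference is in how the key inputs are obtained. For (1) the paper simply cites the identifications fp-injective $=$ absolutely pure and absolutely pure $+$ pure injective $=$ injective, whereas you reprove them via the pullback/filtered-colimit argument; this is more self-contained but establishes the same facts. For (2) you make explicit a point the paper leaves implicit, namely that pure injectivity in the definable subcategory $\Flat{\T^\c}$ agrees with pure injectivity in $\Mod{\T^\c}$ (via pure injective hulls staying in the definable closure) --- a worthwhile clarification. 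The most genuine divergence is in (3): the paper invokes the external result that $\mc{D} = \{Y \in \T : \bm{y}Y \in \msf{Def}(\bm{y}\mc{D})\}$, while you derive it from an explicit correspondence between coherent functors on $\T$ and finitely presented functors on $\Flat{\T^\c}$ through $\bar{G}(\bm{y}X) = G(X)$; note that to close the backward implication you should spell out that the zero set of the functors $\bar{G}_i$ is itself a definable subcategory of $\Flat{\T^\c}$ containing $\bm{y}\mc{D}$, hence containing $\msf{Def}(\bm{y}\mc{D})$ --- this is implicit in your ``which gives'' step. Similarly, for the identity $\msf{Def}(\bm{y}(\bm{y}^{-1}\mc{E})) = \mc{E}$ in (4) you still need that objects of $\mc{E}$ purely embed into products of indecomposable pure injectives of $\mc{E}$, i.e.\ the determination of definable subcategories by their pure injectives, exactly as the paper uses (3) for this. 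In short: your route buys independence from the cited literature at the cost of some extra functor-category bookkeeping, while the paper's citations make the argument shorter; both are sound.
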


\begin{proof}
For (1), by \cref{flatequivalences}, $X$ is flat if and only if it is fp-injective, which is moreover equivalent to it being absolutely pure by~\cite[Proposition 5.6]{dac}. Consequently if $X$ is flat and pure injective it is absolutely pure and pure injective, and is thus injective by \cite[Lemma 12.3.16]{krbook}. For the converse, any injective is flat by \cref{flatequivalences}, and injectivity implies pure injectivity by definition.

For (2), by \cite[Corollary 1.9]{krsmash}, it suffices to show that an object $X\in\Flat{\T^{\c}}$ is pure injective if and only if it is injective. This is the content of \ref{purityflats0}.

For (3), it is clear that the left hand side is contained in the right, since $\bm{y}$ preserves indecomposability and if $X\in\mc{D}$ then $\bm{y}X\in\msf{Def}(\bm{y}\mc{D})$. For the other inclusion, let $E\in\msf{Def}(\bm{y}\mc{D})\cap\msf{pinj}(\Flat{\T^{\c}})$. Then by \ref{purityflats1}, $E\simeq \bm{y}X$ for some $X\in\msf{pinj}(\T)$. But then $\bm{y}X\in\msf{Def}(\bm{y}\mc{D})$ and hence $X\in\mc{D}$, as $\mc{D} = \{Y \in \T : \bm{y}Y \in \msf{Def}(\bm{y}\mc{D})\}$ by \cite[Corollary 4.4]{AMV}.

Statement (4) follows immediately from \ref{purityflats1} and \ref{purityflats2}. Explicitly, the bijection sends
$
\mc{D}\mapsto \msf{Def}(\bm{y}\mc{D})
$
for $\mc{D}\subseteq\T$ definable, with inverse 
$
\widetilde{\mc{D}}\mapsto \bm{y}^{-1}\widetilde{\mc{D}}=\{X\in\T:\bm{y}X\in\widetilde{\mc{D}}\}
$
for $\widetilde{\mc{D}}\subseteq\Flat{\T^{\c}}$ definable.
\end{proof}

We note that, by \cref{purityflats}, the restricted Yoneda embedding induces a homeomorphism between $\msf{Zg}(\T)$ and $\msf{Zg}(\Flat{\T^{\c}})$, where the latter is defined in an analogous way: the points are the indecomposable pure injective flat functors, and the closed sets are of the form $\mc{D} \cap \msf{pinj}(\Flat{\T^{\c}})$, where $\mc{D}\subseteq\Flat{\T^{\c}}$ is a definable subcategory. 

\begin{rem}
We have only recalled the essential details of purity in triangulated categories required for our needs. The paper~\cite{PrestWagstaffe} provides a much deeper investigation into the model theoretic algebra side of purity in triangulated categories.
\end{rem}

\section{Localisations and definable subcategories}\label{ddc}
In this section, we use the bijection between definable subcategories of $\T$ and Serre subcategories of $\mod{\T^\c}$ to describe the injective objects of the Gabriel quotient $\Mod{\T^{\c}}/\rlim\mc{S}$, where $\mc{S}$ is a Serre subcategory of $\mod{\T^{\c}}$. This will be a crucial tool in better understanding homological residue fields in \cref{sec:homological}. 

Let $\mc{S}\subseteq\T$ be a Serre subcategory of $\mod{\T^\c}$, and recall that we write $\mathscr{D}(\mc{S})$ for the associated definable subcategory of $\T$ (cf., \cref{SerreDef}). 
Given this definable subcategory $\mathscr{D}(\mc{S})$, one may consider the corresponding definable subcategory $\msf{Def}(\bm{y}\mathscr{D}(\mc{S}))\subseteq\Mod{\T^{\c}}$ of \cref{purityflats}\ref{purityflats3}. As such it is natural to investigate how the Serre subcategory $\mc{S}$ of $\mod{\T^\c}$ is related to this definable subcategory of $\Mod{\T^\c}$. The goal of this section is to make this relationship precise. 

We will make use of the following fact, see~\cite[Lemma 2.2.10]{krbook} for instance. 
We write $R$ for the fully faithful right adjoint to the localization functor $Q\colon \Mod{\T^\c} \to \Mod{\T^{\c}}/\rlim\mc{S}$. \begin{lem}\label{quotientisperp}
The functor $R$ restricts to an equivalence of categories $R\colon \Mod{\T^{\c}}/\rlim\mc{S} \xrightarrow{\sim} (\rlim\mc{S})^{\perp}$ with quasi-inverse $Q$, where \[
(\rlim\mc{S})^{\perp}:=\{X\in\Mod{\T^{\c}}:\Hom(\rlim\mc{S},X)=0=\mrm{Ext}^{1}(\rlim\mc{S},X)\}.
\]
\end{lem}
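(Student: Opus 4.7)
The plan is to identify the essential image of the fully faithful functor $R$ as precisely $(\rlim\mc{S})^\perp$; this is a standard feature of Gabriel localisation at a hereditary torsion class, and the argument has two symmetric halves.

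First I would show that $RY \in (\rlim\mc{S})^\perp$ for every $Y$ in the quotient. The vanishing of $\Hom(S,RY)$ for $S \in \rlim\mc{S}$ is immediate from adjunction, since $\Hom(S,RY) \cong \Hom(QS,Y) = 0$ because $QS = 0$. For the $\Ext^1$ vanishing I would take an extension
\[
0 \to RY \to E \to S \to 0
\]
with $S \in \rlim\mc{S}$ and apply $Q$, which is exact; this forces $QE \xrightarrow{\sim} QRY = Y$. The unit $\eta_E\colon E \to RQE \cong RY$ then splits the inclusion by naturality (using that $\eta_{RY}$ is an isomorphism because $R$ is fully faithful), so the extension splits and the $\Ext^1$ vanishes.

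Second I would prove the converse: if $X \in (\rlim\mc{S})^\perp$ then the unit $\eta_X\colon X \to RQX$ is an isomorphism, placing $X$ in the essential image of $R$. The key input here is that the kernel and cokernel of $\eta_X$ lie in $\rlim\mc{S}$, since $Q$ is exact and $Q\eta_X$ is an isomorphism. Writing $K = \ker \eta_X$, the inclusion $K \hookrightarrow X$ is forced to be zero by the vanishing of $\Hom(K,X)$, so $K = 0$. Then we have a short exact sequence
\[
0 \to X \to RQX \to C \to 0
\]
with $C \in \rlim\mc{S}$, and the vanishing of $\Ext^1(C,X)$ splits it, exhibiting $C$ as a direct summand of $RQX$. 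But $RQX$ lies in $(\rlim\mc{S})^\perp$ by the first part, so $\Hom(C, RQX) = 0$, forcing $C = 0$; thus $\eta_X$ is an isomorphism.

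The two halves together show that $R$ lands in $(\rlim\mc{S})^\perp$ and that every object of $(\rlim\mc{S})^\perp$ is (canonically isomorphic to) one in the image of $R$; combined with the fact that $R$ is already fully faithful, this yields the claimed equivalence. I do not expect a serious obstacle: the only subtle point is the $\Ext^1$ half of the first step, and that is handled cleanly by the splitting argument via the unit of the adjunction, without having to compute Ext via an injective resolution.
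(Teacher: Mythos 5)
Your argument is correct. Note that the paper does not prove this lemma at all: it is quoted as a known fact with a reference to \cite[Lemma 3.2]{krcq}, so there is no in-paper proof to compare against. What you have written is the standard Gabriel-localisation argument identifying the essential image of the fully faithful right adjoint $R$ with the $\rlim\mc{S}$-local objects: the adjunction plus exactness of $Q$ gives $\Hom$- and $\Ext^1$-vanishing for objects $RY$ (your splitting of extensions via the unit and the isomorphism $\eta_{RY}$ is the right mechanism), and conversely the kernel and cokernel of $\eta_X$ lie in $\ker Q=\rlim\mc{S}$, so the perpendicularity hypotheses on $X$ kill them. This is a complete and correct substitute for the citation; the only implicit inputs are that $\rlim\mc{S}$ is localizing (so $\ker Q=\rlim\mc{S}$ and $Q$ is exact) and that full faithfulness of $R$ is equivalent to the counit being invertible, both of which are standard and consistent with how the paper sets up the localisation sequence.
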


The following result shows that any object in $\msf{Def}(\bm{y}\mathscr{D}(\mc{S}))$ is in $(\rlim\mc{S})^{\perp}$.
\begin{prop}\label{homorthogonal}
Let $f\in\rlim\mc{S}$ and $X\in\msf{Def}(\bm{y}\mathscr{D}(\mc{S}))$. Then $\mrm{Ext}^i(f,X)=0$ for all $i \geq 0$.
\end{prop}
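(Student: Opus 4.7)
The plan is to construct an injective coresolution $0 \to X \to I^{0} \to I^{1} \to \cdots$ of $X$ in $\Mod{\T^\c}$ whose every term annihilates all Homs out of $\rlim\mc{S}$, and then to read the Ext vanishing off from it. The main input is \cref{purityflats}: the subcategory $\msf{Def}(\bm{y}\mathscr{D}(\mc{S}))$ sits inside $\Flat{\T^\c}$, its pure injectives are precisely the functors $\bm{y}E$ with $E \in \mathscr{D}(\mc{S}) \cap \msf{pinj}(\T)$, and moreover each such $\bm{y}E$ is already injective (not just pure injective) in $\Mod{\T^\c}$.

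To build the coresolution I would first note that $\msf{Def}(\bm{y}\mathscr{D}(\mc{S}))$ is also a definable subcategory of the ambient Grothendieck category $\Mod{\T^\c}$ (since $\Mod{\T^\c}$-purity is stronger than $\Flat{\T^\c}$-purity, and $\Mod{\T^\c}$-pure subobjects and quotients of flat objects remain flat). This lets me embed $X \hookrightarrow I^{0} := \prod_{i} \bm{y}E_{i}^{(0)}$ as a $\Mod{\T^\c}$-pure monomorphism into a product of such $\bm{y}E$; the resulting $I^{0}$ is injective in $\Mod{\T^\c}$ since products of injectives are injective there, and the cokernel $I^{0}/X$ lies again in $\msf{Def}(\bm{y}\mathscr{D}(\mc{S}))$ by closure under pure quotients. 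Iterating this on successive cokernels yields the desired injective coresolution, with each $I^{j}$ a product of functors $\bm{y}E$ with $E \in \mathscr{D}(\mc{S}) \cap \msf{pinj}(\T)$.

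The key calculation is then that $\Hom(f, I^{j}) = 0$ for every $f \in \rlim\mc{S}$ and every $j$. For $g \in \mc{S}$ one has $\Hom(g, \bm{y}E) = g^{\vee}(E) = 0$ by the very definition of $\mathscr{D}(\mc{S})$; writing $f = \rlim g_{k}$ with $g_{k} \in \mc{S}$ this extends to $\Hom(f, \bm{y}E) = \llim_{k}\Hom(g_{k}, \bm{y}E) = 0$, and $\Hom$ commutes with products in the second variable. Since $\mrm{Ext}^{i}(f, X) = H^{i}(\Hom(f, I^{\bullet}))$, the proposition follows. The step requiring the most care is the identification of $\msf{Def}(\bm{y}\mathscr{D}(\mc{S}))$ as a $\Mod{\T^\c}$-definable subcategory rather than only a $\Flat{\T^\c}$-definable one, since only this stronger purity ensures that each $I^{j}/\mrm{im}$ remains in the subcategory and allows the iteration to proceed.
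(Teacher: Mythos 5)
Your argument is correct and rests on exactly the ingredients the paper uses: the pure embedding of any object of $\msf{Def}(\bm{y}\mathscr{D}(\mc{S}))$ into a product of the $\bm{y}E$ with $E \in \mathscr{D}(\mc{S}) \cap \msf{pinj}(\T)$, the computation $\Hom(f,\bm{y}E) \simeq \llim_k \Hom(g_k,\bm{y}E) \simeq \llim_k g_k^{\vee}(E) = 0$, the injectivity in $\Mod{\T^\c}$ of flat pure injectives, and closure of the definable subcategory under pure quotients. The only difference is organizational: you package the induction as an explicit injective coresolution with terms $\prod \bm{y}E$, whereas the paper proves the case $i=0$ from the pure embedding and then dimension-shifts along the pure exact sequence $0 \to X \to PE(X) \to Z \to 0$; your ``step requiring the most care'' (that $\msf{Def}(\bm{y}\mathscr{D}(\mc{S}))$ is definable in $\Mod{\T^\c}$, not just in $\Flat{\T^\c}$) is indeed standard, since flat objects are absolutely pure so the two notions of purity agree on them, and so the iteration goes through.
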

\begin{proof}
We prove the $i=0$ case first. By \cref{purityflats}\ref{purityflats2}, 
$\msf{Def}(\bm{y}\mathscr{D}(\mc{S})) = \msf{Def}(\bm{y}E_j : j \in J)$
where $\{E_{j}\}_J = \mathscr{D}(\mc{S}) \cap \msf{pinj}(\T)$. Consequently if $X\in\msf{Def}(\bm{y}\mathscr{D}(\mc{S}))$, there is a pure monomorphism $0\to X\to \prod_{J}\bm{y}E_{j}$ by the abelian analogue of \cref{purelyembedsinproduct}. By the left exactness of $\Hom(f,-)$ it suffices to show that $\Hom(f,\bm{y}E_{j})=0$ for all $j \in J$. Writing $f=\rlim_{I}f_{i}$, with $f_{i}\in \mc{S}$, we have
\[
\Hom(f,\bm{y}E_{j})\simeq\llim_{I}\Hom(f_{i},\bm{y}E_{j})\simeq \llim_{I}f_{i}^{\vee}(E_{j}).
\]
Since $f_{i}\in\mc{S}$, we have that $f_i^\vee \in \mc{S}^\vee$, but as $E_j \in \mathscr{D}(\mc{S})$, it follows $f_{i}^{\vee}(E_{j})=0$ by definition of $\mathscr{D}(\mc{S})$. This completes the proof of the $i=0$ case, and we now use that $\msf{Def}(\bm{y}\mathscr{D}(\mc{S}))$ is definable, to extend the vanishing property to higher Ext-groups. 

Consider the pure exact sequence $0\to X\to PE(X)\to Z\to 0$, where $PE(X)$ denotes the pure injective hull of $X$. As $\msf{Def}(\bm{y}\mathscr{D}(\mc{S}))$ is definable in $\Flat{\T^\c}$, it follows that each term of this pure exact sequence is in $\msf{Def}(\bm{y}\mathscr{D}(\mc{S}))$ since $X \in \msf{Def}(\bm{y}\mathscr{D}(\mc{S}))$ if and only if $PE(X) \in \msf{Def}(\bm{y}\mathscr{D}(\mc{S}))$ by~\cite[Proposition 12.2.8]{krbook}. Therefore $PE(X)$ is flat and pure injective, and hence is injective by \cite[Lemma 12.3.16]{krbook}, and thus $\t{Ext}^{1}(-,PE(X))=0$. Taking the long exact sequence of $\mrm{Ext}$-groups associated to the pure exact sequence and applying the case $i=0$, one sees that $\t{Ext}^{1}(f,X)=0$. But since $X$ is arbitrary, the same reasoning shows that $\mrm{Ext}^{1}(f,Z)=0$, hence by dimension shifting we see that all higher extensions also vanish, which proves the claim.
\end{proof}

As mentioned, the previous result gives the following.
\begin{cor}\label{definlocalisation}
There is an inclusion $\msf{Def}(\bm{y}\mathscr{D}(\mc{S}))\subseteq(\rlim \mc{S})^{\perp}$. \qed
\end{cor}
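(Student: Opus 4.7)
The proof is essentially a direct unravelling of definitions once Proposition \ref{homorthogonal} is in hand. My plan is as follows.

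Fix $X \in \msf{Def}(\bm{y}\mathscr{D}(\mc{S}))$; I want to show $X \in (\rlim\mc{S})^{\perp}$. Recalling the definition
\[
(\rlim\mc{S})^{\perp}=\{Y\in\Mod{\T^{\c}}:\Hom(\rlim\mc{S},Y)=0=\mrm{Ext}^{1}(\rlim\mc{S},Y)\},
\]
it suffices to verify that for every $f \in \rlim\mc{S}$, both $\Hom(f,X)=0$ and $\mrm{Ext}^{1}(f,X)=0$. But this is precisely the $i=0$ and $i=1$ cases of Proposition \ref{homorthogonal}, which asserts that $\mrm{Ext}^{i}(f,X)=0$ for all $i\geq 0$ whenever $f \in \rlim\mc{S}$ and $X \in \msf{Def}(\bm{y}\mathscr{D}(\mc{S}))$.

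Thus the proof is a one-line invocation of Proposition \ref{homorthogonal}, and there is no real obstacle to overcome at this stage: all of the work has already been done in establishing the vanishing of $\mrm{Ext}^{i}$ for definable objects (where the delicate step was the dimension-shifting argument using that the pure injective hull of an object in $\msf{Def}(\bm{y}\mathscr{D}(\mc{S}))$ remains in the definable subcategory and is then injective by \cite[Lemma 12.3.16]{krbook}). Lemma \ref{quotientisperp} is not strictly needed for the inclusion itself, though it provides the conceptual reason this inclusion is of interest: it identifies $(\rlim\mc{S})^{\perp}$ with the essential image of the right adjoint $R$, so the corollary can be read as saying that every $X \in \msf{Def}(\bm{y}\mathscr{D}(\mc{S}))$ is (canonically) of the form $R(QX)$, setting the stage for the characterisation of injectives in the Gabriel quotient in Theorem \ref{injectivesinloc}.
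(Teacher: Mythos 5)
Your proof is correct and matches the paper's: the corollary is exactly the $i=0$ and $i=1$ cases of Proposition \ref{homorthogonal} applied to the definition of $(\rlim\mc{S})^{\perp}$, which is why the paper states it with an immediate \qed. Your observation that Lemma \ref{quotientisperp} is only needed for the later interpretation, not for the inclusion itself, is also accurate.
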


Since $\mc{S}$ is a Serre subcategory of $\mod{\T^{\c}}$, the class $\rlim\mc{S}$ is a hereditary torsion class of $\Mod{\T^{\c}}$ (see, for example \cite[Proposition 11.1.36]{psl}). Let $(\rlim\mc{S},\mc{F})$ denote the corresponding hereditary torsion pair, which, by being hereditary, is uniquely determined by a class of injective objects in $\Mod{\T^{\c}}$; in other words, there is a class $\mc{E}\subseteq\msf{Inj}(\T^{\c})$ such that 
\begin{equation}\label{limclosure}
\rlim\mc{S}=\{F\in\Mod{\T^{\c}}:\Hom(F,E)=0 \t{ for all }E\in\mc{E}\}
\end{equation}
and 
\begin{equation}
\mc{F}=\{F\in\Mod{\T^{\c}} : \t{there is an embedding } F\to X \t{ with }X\in\msf{Prod}(\mc{E})\},
\end{equation}
where $\msf{Prod}(\mc{E})$ denotes the subcategory of retracts of direct products of elements of $\mc{E}$. Thus $\mrm{Inj}(\T^\c) \cap \mc{F} = \mc{E}$.

By \cref{definlocalisation}, we see that if $X \in \mathscr{D}(\mc{S})$ is pure injective, then $\bm{y}X \in \mc{E}$. In particular, $\msf{Def}(\bm{y}\mathscr{D}(\mc{S})) \cap \msf{Inj}(\T^\c) \subseteq \mc{E}$. However, it could be the case that there are many more injective objects in $\mc{E}$ than in $\msf{Def}(\bm{y}\mathscr{D}(\mc{S}))$. The following result shows that this is in fact not the case.

\begin{lem}\label{injcogen}
Let $\mc{E}$ be the set of injectives that cogenerates the torsion pair $(\rlim\mc{S},\mc{F})$. Then $\mc{E}\subseteq\bm{y}\mathscr{D}(\mc{S})$. In particular, if $E \in (\rlim\mc{S})^\perp \cap \msf{Inj}(\T^\c)$, then $E \simeq \bm{y}X$ for some $X \in \mathscr{D}(\mc{S}) \cap \msf{Pinj}(\T)$.
\end{lem}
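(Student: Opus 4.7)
The plan is to identify an arbitrary $E \in \mc{E}$ with $\bm{y}X$ for some pure injective $X$, and then translate the cogeneration property of $\mc{E}$ into the vanishing condition defining $\mathscr{D}(\mc{S})$.

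First, given $E \in \mc{E}$, we have $E \in \msf{Inj}(\T^{\c})$ by definition. Applying the equivalence $\bm{y}\colon\msf{Pinj}(\T) \xrightarrow{\sim} \msf{Inj}(\T^{\c})$ of~\cite[Corollary 1.9]{krsmash}, there exists a pure injective $X \in \T$, unique up to isomorphism, with $E \simeq \bm{y}X$. The core of the proof is then to verify $X \in \mathscr{D}(\mc{S})$, i.e., that $f^{\vee}X = 0$ for every $f \in \mc{S}$.

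To this end, I would unwind the definition of the duality from \eqref{krdual}: for any $f \in \mod{\T^\c}$,
\[
f^{\vee}(X) = \Hom_{\Mod{\T^\c}}(f, \bm{y}X) \simeq \Hom_{\Mod{\T^\c}}(f,E).
\]
Since $f \in \mc{S} \subseteq \rlim\mc{S}$ and $E \in \mc{E}$, the cogeneration property \eqref{limclosure} forces $\Hom_{\Mod{\T^\c}}(f,E) = 0$. Hence $f^{\vee}X = 0$ for all $f \in \mc{S}$, so $X \in \mathscr{D}(\mc{S})$ by the definition of $\mathscr{D}(-)$ in \eqref{SerreDef}. This proves $\mc{E} \subseteq \bm{y}\mathscr{D}(\mc{S})$.

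For the ``in particular'' statement, suppose $E \in (\rlim\mc{S})^{\perp} \cap \msf{Inj}(\T^{\c})$. By \cref{quotientisperp}, the $\Hom$-vanishing half of the definition of $(\rlim\mc{S})^{\perp}$ combined with \eqref{limclosure} places $E$ in $\mc{F}$. Since $E$ is injective, $E \in \msf{Inj}(\T^{\c}) \cap \mc{F} = \mc{E}$, and the first part produces the desired $X \in \mathscr{D}(\mc{S}) \cap \msf{Pinj}(\T)$ with $E \simeq \bm{y}X$.

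Overall this is a short unwinding of the definitions, so I do not anticipate a genuine obstacle. The only point requiring mild care is matching the two flavours of orthogonality --- the $\perp$ of \cref{quotientisperp} (which includes an $\mrm{Ext}^{1}$ condition) with the $\Hom$-only cogeneration description of $\mc{F}$ --- but for injective $E$ the $\mrm{Ext}^{1}$ condition is automatic, so the identification $\mc{E} = \msf{Inj}(\T^{\c}) \cap \mc{F}$ suffices.
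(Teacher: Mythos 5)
Your proposal is correct and matches the paper's argument: the paper likewise writes $E\simeq\bm{y}X$ with $X$ pure injective and computes $f^{\vee}(X)=\Hom(f,\bm{y}X)=\Hom(f,E)=0$ for $f\in\mc{S}$ via \eqref{limclosure}, concluding $X\in\mathscr{D}(\mc{S})$. Your handling of the ``in particular'' part through $\msf{Inj}(\T^{\c})\cap\mc{F}=\mc{E}$ is exactly the identification the paper sets up in the text preceding the lemma, so there is no substantive difference.
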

\begin{proof}
If $E\in\mc{E}$ then $E\simeq\bm{y}X$ for some $X\in\msf{Pinj}(\T)$. If $f\in\mc{S}$, then by \cref{limclosure}, we have
\[
0=\Hom(f,E)\simeq \Hom(f,\bm{y}X)= f^{\vee}(X).
\]
Consequently $X\in\mathscr{D}(\mc{S})$ by definition of $\mathscr{D}(\mc{S})$, and thus $E \simeq \bm{y}X\in\bm{y}\mathscr{D}(\mc{S})$.
\end{proof}

We can now assemble these results to give the main result of this section.
\begin{thm}\label{determineinjs}
The functor $Q\circ \bm{y}\colon\T\to\Mod{\T^{\c}}/\rlim\mc{S}$ restricts to an equivalence of categories
\[
\mathscr{D}(\mc{S})\cap\msf{Pinj}(\T)\xrightarrow{\sim}\msf{Inj}(\Mod{\T^{\c}}/\rlim\mc{S}).
\]
\end{thm}
\begin{proof}
The functor $Q\colon (\rlim\mc{S})^{\perp} \to \Mod{\T^\c}/\rlim\mc{S}$ is an exact equivalence by \cref{quotientisperp}, and therefore it restricts to an equivalence \[Q\colon \msf{Inj}((\rlim\mc{S})^{\perp}) \to \msf{Inj}(\Mod{\T^\c}/\rlim\mc{S})\] on injective objects. As $\msf{Inj}((\rlim\mc{S})^{\perp}) = (\rlim \mc{S})^\perp \cap \msf{Inj}(\T^\c)$ (for example, see~\cite[Corollary 2.2.15]{krbook}), it thus remains to prove that
\[\bm{y}\colon \mathscr{D}(\mc{S}) \cap \msf{Pinj}(\T) \xrightarrow{\sim} (\rlim \mc{S})^\perp \cap \msf{Inj}(\T^\c)\] is an equivalence of categories. This is well-defined by \cref{definlocalisation}, essentially surjective by \cref{injcogen}, and fully faithful by~\cite[Corollary 1.9]{krsmash}.
\end{proof}

\section{An atlas of spectra}\label{sec:homological}
In this section, we prove the main results of this paper. Following some preliminaries on monoidal considerations, we give a characterization of the injective objects in homological residue fields in terms of definability, and then prove that the homological spectrum is homeomorphic to a certain space built from a quotient of the Ziegler spectrum using a Gabriel-Zariski style topology.

Henceforth we assume that $\T$ is a rigidly-compactly generated tensor-triangulated category. We briefly recall what this means below, together with the interaction between the tensor product on $\T$ and on its module category, see~\cite{bks} for more details.

\subsection{Tensors and modules}
Suppose that $\T$ is a rigidly-compactly generated tensor-triangulated category. More precisely, this is a triangulated category equipped with a closed symmetric monoidal structure, with both the tensor product $- \otimes -$ and the internal hom $\iHom(-,-)$ exact in both variables. We write $\1$ for the monoidal unit. Moreover the rigid and compact objects of $\T$ coincide, where an object $X \in \T$ is said to be rigid if the natural map $\iHom(X,\1) \otimes Y \to \iHom(X,Y)$ is an isomorphism for all $Y \in \T$. 

When $\T$ is a rigidly-compactly generated tensor-triangulated category, the module category $\Mod{\T^\c}$ inherits a symmetric monoidal structure via Day convolution, which is characterized by the properties that $- \otimes -$ commutes with colimits in both variables, and that $\bm{y}(X \otimes Y) = \bm{y}X \otimes \bm{y}Y$ for all $X,Y \in \T^\c$. Then the restricted Yoneda embedding $\bm{y}\colon \T \to \Mod{\T^\c}$ is symmetric monoidal. Moreover, the module category $\Mod{\T^\c}$ is \emph{closed} symmetric monoidal, so that it admits an internal hom.

For any $X \in \T$ (not necessarily compact), $\bm{y}X$ is $\otimes$-flat (i.e., $\bm{y}X \otimes -$ is exact)~\cite[Proposition A.14]{BKSframe}. From this one sees that the flat objects in $\Mod{\T^\c}$ coincide with the $\otimes$-flat objects. Indeed, given any $M \in \Flat{\T^\c}$, we have $M \simeq \rlim \bm{y}X_i$ where $X_i \in \T^\c$, and this is $\otimes$-flat by the above recollection. Conversely, suppose that $M$ is $\otimes$-flat, and $X \to Y \to Z$ is a triangle of compacts. Writing $D = \iHom(-, \1)$ for the functional dual, we see that \[\bm{y}(D\Sigma X) \to \bm{y}(DZ) \to \bm{y}(DY) \to \bm{y}(DX)\] is exact, and then as $M(X \otimes -) \simeq M \otimes \bm{y}(DX)$ by~\cite[Lemma 2.5]{bks} it follows that \[M(\Sigma X) \to M(Z) \to M(Y) \to M(X)\] is exact. Therefore $M$ is cohomological, and hence flat by \cref{flatequivalences}.

\subsection{Definable tensor closure}\label{deftensor}
In this subsection we give some preparatory results regarding the interaction between tensors and definable subcategories.

We say a definable subcategory $\mc{D}$ is \emph{$\otimes$-closed} if whenever $X \in \mc{D}$, we have $Y \otimes X \in \mc{D}$ for all $Y \in \T$. We emphasise that such definable subcategories are not assumed to be triangulated. By~\cite[Theorem 5.1.8]{wagstaffe}, the fundamental correspondence (cf., \cref{SerreDef}) restricts to an order-reversing bijection between $\otimes$-closed definable subcategories of $\T$ and Serre $\otimes$-ideals of $\mod{\T^\c}$, that is, those Serre subcategories $\mc{S}$ for which $f\otimes g\in\mc{S}$ whenever $g\in\mc{S}$ and $f\in\mod{\T^{\c}}$

The following lemma shows that the $\otimes$-closure of definable subcategories can be tested on compact objects. We will give a direct proof in the spirit of this paper using functor categories. We note that an alternative proof can be found in~\cite[Lemma 5.1.11]{wagstaffe}.
\begin{lem}\label{deftensoridealcompacts}
Let $\mc{D}\subseteq\T$ be a definable subcategory. Then $\mc{D}$ is $\otimes$-closed if and only if $C\otimes X\in\mc{D}$ for every $X\in\mc{D}$ and $C\in\T^{\c}$.
\end{lem}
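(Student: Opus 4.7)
The forward direction is immediate: if $\mc{D}$ is $\otimes$-closed and $X \in \mc{D}$, then by definition $C \otimes X \in \mc{D}$ for every $C \in \T^\c$ (indeed, for every $C \in \T$). For the converse, my plan is to transport the problem along the restricted Yoneda embedding $\bm{y}\colon \T \to \Mod{\T^\c}$ and exploit the fact, recorded in \cref{flatequivalences}, that every flat functor is a filtered colimit of representables of compact objects.

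Concretely, fix $Y \in \T$ and $X \in \mc{D}$. By \cref{purityflats}\ref{purityflats3}, verifying that $Y \otimes X \in \mc{D}$ is equivalent to verifying that $\bm{y}(Y \otimes X) \in \msf{Def}(\bm{y}\mc{D})$. Since $\bm{y}$ is symmetric monoidal and $\bm{y}Y$ is flat, we can write $\bm{y}Y \simeq \rlim_{j} \bm{y}C_j$ with $C_j \in \T^\c$ and the diagram filtered, so that
\[
\bm{y}(Y \otimes X) \;\simeq\; \bm{y}Y \otimes \bm{y}X \;\simeq\; \rlim_j (\bm{y}C_j \otimes \bm{y}X) \;\simeq\; \rlim_j \bm{y}(C_j \otimes X),
\]
using that the Day-convolution tensor product on $\Mod{\T^\c}$ commutes with colimits in each variable, and again the monoidality of $\bm{y}$. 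By hypothesis each $C_j \otimes X$ lies in $\mc{D}$, so each $\bm{y}(C_j \otimes X)$ lies in $\bm{y}\mc{D} \subseteq \msf{Def}(\bm{y}\mc{D})$. Since $\msf{Def}(\bm{y}\mc{D})$ is a definable subcategory of the finitely accessible category $\Flat{\T^\c}$, it is closed under filtered colimits, so the displayed filtered colimit lies in $\msf{Def}(\bm{y}\mc{D})$. Hence $Y \otimes X \in \mc{D}$, as required.

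I do not anticipate a serious obstacle here: the proof is essentially a matter of threading together the monoidality of $\bm{y}$, the flat presentation of $\bm{y}Y$ guaranteed by \cref{flatequivalences}, and the bijection of \cref{purityflats}\ref{purityflats3} between definable subcategories of $\T$ and of $\Flat{\T^\c}$. The only mild subtlety is checking that all the identifications line up — that $\otimes$ on $\Mod{\T^\c}$ commutes with the relevant filtered colimits and that $\msf{Def}(\bm{y}\mc{D})$ really is definable in the finitely accessible sense so its closure under filtered colimits is available — but both of these are already in place from \cref{backgroundsection}.
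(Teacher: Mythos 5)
Your proof is correct and is essentially the paper's own argument: both transport the question along $\bm{y}$ (using that $\mc{D}=\{X\in\T:\bm{y}X\in\msf{Def}(\bm{y}\mc{D})\}$), write $\bm{y}Y$ as a filtered colimit of representables of compacts, and conclude via monoidality of $\bm{y}$ and closure of $\msf{Def}(\bm{y}\mc{D})$ under direct limits. No changes needed.
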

\begin{proof}
Only the reverse implication needs proving. Suppose $D \in \mc{D}$ and $Z\in\T$. Consider $\bm{y}Z\in\Flat{\T^{\c}}$ which we will realise as $\bm{y}Z\simeq \rlim\bm{y}C_{i}$, where each $C_{i}\in\T^\c$. Now, since $\mc{D}=\{X\in\T: \bm{y}X\in\msf{Def}(\bm{y}\mc{D})\}$ by~\cite[Corollary 4.4]{AMV}, to prove that $Z \otimes D \in \mc{D}$ it suffices to show that $\bm{y}(Z\otimes D)\in\msf{Def}(\bm{y}\mc{D})$. This holds, as there are isomorphisms
\[
\bm{y}(Z\otimes D)\simeq \bm{y}Z\otimes\bm{y}D\simeq \rlim(\bm{y}C_i\otimes\bm{y}D)\simeq \rlim\bm{y}(C_i\otimes D),
\]
and by assumption we have $C_i\otimes D\in\mc{D}$ and $\msf{Def}(\bm{y}\mc{D})$ is closed under direct limits. 
\end{proof}

Using that $\otimes$-closure of definable subcategories may be tested on compacts, we obtain the following characterization of definable $\otimes$-closed subcategories.
\begin{lem}\label{definabletensorcharacterisation}
Let $\mc{D}$ be a full subcategory of $\T$. Then $\mc{D}$ is a $\otimes$-closed definable subcategory if and only if there exists a set $\{F_i\}_I$ of coherent functors such that \[\mc{D} = \{X \in \T : F_i(C \otimes X) = 0 \text{ for all $C \in \T^\c$ and $i \in I$}\}.\]
\end{lem}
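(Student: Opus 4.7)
The plan is to handle the two directions separately, with most of the work concentrated in the reverse direction. The forward implication is essentially immediate, while the reverse requires two things: showing the given description actually defines a definable subcategory, and then verifying $\otimes$-closure. I expect the main technical step to be verifying that for each coherent $F$ and each compact $C$, the functor $F(C \otimes -)$ is again coherent — here rigidity of compacts is essential.

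For the forward direction, assume $\mc{D}$ is a $\otimes$-closed definable subcategory. By definability there is a set $\{F_i\}_I$ of coherent functors with $\mc{D} = \{X \in \T : F_i(X) = 0 \text{ for all } i\}$. If $X \in \mc{D}$, then $\otimes$-closure gives $C \otimes X \in \mc{D}$ for every $C \in \T^\c$, so $F_i(C \otimes X) = 0$. Conversely, any $X$ with $F_i(C \otimes X) = 0$ for all $C \in \T^\c$, $i \in I$, satisfies $F_i(X) = F_i(\1 \otimes X) = 0$ (by taking $C = \1 \in \T^\c$), hence $X \in \mc{D}$.

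For the reverse direction, suppose $\mc{D} = \{X \in \T : F_i(C \otimes X) = 0 \text{ for all } C \in \T^\c, i \in I\}$ for some set $\{F_i\}_I$ of coherent functors. The main step is to show each $G_{i,C} := F_i(C \otimes -)$ is coherent. Fix a presentation
\[
\Hom_\T(B,-) \to \Hom_\T(A,-) \to F_i \to 0
\]
with $A, B \in \T^\c$. Writing $DC := \iHom(C,\1)$, rigidity of $C$ gives the adjunction $(DC \otimes -) \dashv (C \otimes -)$, hence natural isomorphisms $\Hom_\T(A, C \otimes Y) \simeq \Hom_\T(DC \otimes A, Y)$ and similarly for $B$. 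Since $DC \in \T^\c$ and $\T^\c$ is closed under tensor products, $DC \otimes A$ and $DC \otimes B$ lie in $\T^\c$, and we obtain a presentation
\[
\Hom_\T(DC \otimes B, -) \to \Hom_\T(DC \otimes A, -) \to G_{i,C} \to 0
\]
exhibiting $G_{i,C}$ as coherent. As $\T^\c$ is essentially small, the collection $\{G_{i,C} : i \in I, C \in \T^\c\}$ is a set, so $\mc{D}$ is definable.

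Finally, for $\otimes$-closure, by \cref{deftensoridealcompacts} it suffices to check that $C \otimes X \in \mc{D}$ whenever $X \in \mc{D}$ and $C \in \T^\c$. For any $C' \in \T^\c$, we have $C' \otimes C \in \T^\c$, so $F_i(C' \otimes C \otimes X) = F_i((C' \otimes C) \otimes X) = 0$, confirming $C \otimes X \in \mc{D}$. The main obstacle — ensuring the candidate defining functors stay within $\msf{Coh}(\T)$ under the auxiliary tensoring — is resolved cleanly by the rigidity hypothesis, so no further delicacy is needed.
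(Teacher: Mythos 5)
Your proof is correct and follows essentially the same route as the paper: the forward direction uses $\otimes$-closure plus specialising to $C=\1$, and the reverse direction establishes definability and then invokes \cref{deftensoridealcompacts} for $\otimes$-closure. The only difference is that you spell out, via rigidity and the presentation $\Hom_\T(DC\otimes B,-)\to\Hom_\T(DC\otimes A,-)\to F_i(C\otimes-)\to 0$, why the functors $F_i(C\otimes-)$ are coherent, a point the paper leaves implicit ("one sees that\dots").
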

\begin{proof}
One sees that for any set $\{F_i\}_I$ of coherent functors, $\{X \in \T : F_i(C \otimes X) = 0 \text{ for all $C \in \T^\c$ and $i \in I$}\} 
$
is definable and $\otimes$-closed using \cref{deftensoridealcompacts}, which proves the reverse implication. For the forward implication, let $\mc{D}$ be a $\otimes$-closed definable subcategory. Since $\mc{D}$ is definable there is a set $\{F_i\}_I$ of coherent functors so that 
$\mc{D} = \{X \in \T : F_i(X) = 0 \text{ for all $i \in I$}\}.$ Since $\mc{D}$ is $\otimes$-closed, we have $X \in \mc{D}$ if and only if $C \otimes X \in \mc{D}$ for all $C \in \T^\c$. In other words, $F_iX = 0$ for all $i \in I$ if and only if $F_i(C \otimes X) = 0$ for all $i \in I$ and $C \in \T^\c$. As such, $\mc{D} = \{X \in \T : F_i(C \otimes X) = 0 \text{ for all $C \in \T^\c$ and $i \in I$}\}$ as required.
\end{proof}

With the addition of a tensor product it makes sense to retopologise the Ziegler spectrum. Rather than considering closed sets bijecting with definable subcategories, we follow \cite{wagstaffe} and consider the Ziegler $\otimes$-topology, whose closed sets are of the form $\mc{D} \cap \msf{pinj}(\T)$ where $\mc{D}$ is a $\otimes$-closed definable subcategory. We denote this topological space by $\msf{Zg}^\otimes(\T)$.

The following result enables one to relate the definable closure of a set of objects in the Ziegler $\otimes$-topology with its closure in the usual Ziegler topology.

\begin{lem}\label{relatingclosures}
Let $\msf{X}\subseteq\T$. Then
\[
\msf{Def}^{\otimes}(\msf{X})=\msf{Def}(C\otimes X:C\in\T^{\c},X\in\msf{X}).
\]
\end{lem}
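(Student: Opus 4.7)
The plan is to prove the equality by mutual inclusion, with the inclusion ``$\supseteq$'' being the easy direction and the inclusion ``$\subseteq$'' requiring a definability trick.

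For the inclusion $\supseteq$, observe that the left-hand side is a $\otimes$-closed definable subcategory containing $\msf{X}$. By \cref{deftensoridealcompacts}, $\otimes$-closure implies $C\otimes X\in\msf{Def}^{\otimes}(\msf{X})$ for every $C\in\T^{\c}$ and every $X\in\msf{X}$. Since the left-hand side is definable and contains this generating set, it contains the definable subcategory generated by them, which is exactly the right-hand side.

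For the converse inclusion, abbreviate $\mc{R}=\msf{Def}(C\otimes X:C\in\T^{\c},X\in\msf{X})$. It is definable by construction and contains $\msf{X}$ (take $C=\1$), so it suffices to show that $\mc{R}$ is $\otimes$-closed; then minimality of $\msf{Def}^{\otimes}(\msf{X})$ gives $\msf{Def}^{\otimes}(\msf{X})\subseteq\mc{R}$. By \cref{deftensoridealcompacts}, it is enough to verify that $C\otimes Y\in\mc{R}$ for every $Y\in\mc{R}$ and every $C\in\T^{\c}$. To do this, consider the auxiliary subcategory
\[
\mc{E}=\{Y\in\T:C\otimes Y\in\mc{R}\text{ for all }C\in\T^{\c}\}
\]
and show it is definable and contains the generators of $\mc{R}$.

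The key step is that $\mc{E}$ is definable. Writing $\mc{R}$ as the vanishing locus of a set of coherent functors $\{G_j\}_J$, one has $\mc{E}=\{Y\in\T:G_j(C\otimes Y)=0\text{ for all }j\in J,C\in\T^{\c}\}$, so it suffices to show $G_j\circ (C\otimes -)$ is coherent whenever $C\in\T^{\c}$. This is where rigidity is used: applying tensor-hom adjunction together with rigidity of $C$ gives $\Hom_{\T}(A,C\otimes -)\simeq \Hom_{\T}(A\otimes \iHom(C,\1),-)$, and since $A\otimes \iHom(C,\1)$ is compact, a presentation of $G_j$ by representables on compacts pulls back to such a presentation for $G_j(C\otimes -)$. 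Hence $\mc{E}$ is definable. That $\mc{E}$ contains the generating set is immediate, as $C\otimes(C'\otimes X)=(C\otimes C')\otimes X$ with $C\otimes C'\in\T^{\c}$. Minimality of the definable closure then forces $\mc{R}\subseteq\mc{E}$, which is precisely what we needed.

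The only subtle point is the coherence of $G_j\circ (C\otimes -)$, which I expect to be the main technical ingredient; once that is in hand, the rest is a standard ``definable subcategory defined by a property'' argument.
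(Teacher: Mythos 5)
Your proof is correct, but it takes a genuinely different route from the paper. The paper pushes everything along the strong monoidal restricted Yoneda embedding into $\Flat{\T^\c}$ and argues with purity there: an object of the definable closure is a pure subobject of something built from the generators by direct limits and products, tensoring with the finitely presented functor $\bm{y}A$ preserves pure monomorphisms, and it commutes with direct limits and products, so the right-hand side is closed under $\bm{y}A\otimes-$ for $A\in\T^\c$; \cref{deftensoridealcompacts} then finishes, exactly as in your outline. You instead stay inside $\T$ and run an ``auxiliary definable subcategory'' argument: writing $\mc{R}$ as the vanishing locus of coherent functors $G_j$, you show the subcategory $\mc{E}=\{Y: G_j(C\otimes Y)=0 \text{ for all } j, C\in\T^\c\}$ is definable because $G_j\circ(C\otimes-)$ is again coherent, using rigidity to rewrite $\Hom_\T(A,C\otimes-)\simeq\Hom_\T(A\otimes\iHom(C,\1),-)$ with $A\otimes\iHom(C,\1)$ compact; since $\mc{E}$ contains the generators, $\mc{R}\subseteq\mc{E}$, so $\mc{R}$ is closed under tensoring with compacts and \cref{deftensoridealcompacts} applies. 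This coherence-of-composites observation is precisely the mechanism implicit in the paper's \cref{definabletensorcharacterisation}, so you could shorten your argument by simply noting that your $\mc{E}$ is of the form appearing there; conversely, your version makes that implicit step explicit, which is a nice payoff. What each approach buys: the paper's proof avoids any computation with internal homs but leans on purity facts from the literature (preservation of pure monomorphisms by $\bm{y}A\otimes-$, the description of definable closure via products, direct limits and pure subobjects), whereas yours is self-contained at the level of coherent functors on $\T$ but uses rigidity of compacts in an essential way (harmless here, since $\T$ is assumed rigidly-compactly generated, which also guarantees $\1\in\T^\c$ so that taking $C=\1$ is legitimate, and $\T^\c$ is skeletally small so your family $\{G_j(C\otimes-)\}$ is indeed a set).
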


\begin{proof}
Since $\bm{y}$ is strong monoidal, we may use \cref{purityflats} to see that it suffices to show
$
\mc{D}:=\msf{Def}(\bm{y}C\otimes\bm{y}X\colon C\in\T^{\c},X\in\msf{X})
$
is closed under tensoring with $\bm{y}A$ for any $A\in\T^{\c}$. By the comment preceding \cite[Proposition 3.4.9]{psl}, an object $F\in\Flat{\T^{\c}}$ lies in $\mc{D}$ if and only if there is a pure monomorphism
$F \to U$
where $U$ is an object in the closure of $\{\bm{y}C\otimes\bm{y}X\colon C\in\T^{\c},X\in\msf{X}\}$ under direct limits and products. 

So, suppose $Z\in\mc{D}$ with $Z\to U$ the corresponding pure monomorphism and let $A$ be a compact object. Since $\bm{y}A$ is a finitely presented functor, the map
$
\bm{y}A\otimes Z\to\bm{y}A\otimes U
$
is also a pure monomorphism, by \cite[\S13]{dac}. Since $U$ is built from direct limits and products of objects in $\{\bm{y}C\otimes\bm{y}X\colon C\in\T^{\c},X\in\msf{X}\}$, and tensoring with $\bm{y}A$ commutes with taking direct limits and products, it follows that $\bm{y}A\otimes U$ is also in the closure of $\{\bm{y}C\otimes\bm{y}X\colon C\in\T^{\c},X\in\msf{X}\}$ under direct limits and direct products, and thus $\bm{y}A\otimes Z$ is an object in $\mc{D}$, as we wanted.
\end{proof}

\subsection{Homological residue fields}
In this section we recall the construction of the homological spectrum from~\cite{Balmernilpotence, bks}, and use the results of \cref{ddc} to describe the injectives in homological residue fields. 

The \emph{homological spectrum} of $\T$, which is denoted $\hspec{\T^{\c}}$, is the topological space whose points are maximal proper Serre $\otimes$-ideals of $\mod{\T^{\c}}$. The points of the homological spectrum are called \emph{homological primes}. We will describe and discuss the topology in due course. 

Associated to any Serre $\otimes$-ideal $\mc{S}$ of $\mod{\T^{\c}}$ is a pure injective object of $\T$, denoted $E_{\mc{S}}$. This is obtained as follows~\cite[\S 3]{bks}. There is a localisation sequence
\[
\begin{tikzcd} 
\T \arrow[r, "\bm{y}"] & \Mod{\T^{\c}} \arrow[r, shift left = 1mm, "Q"] \arrow[r, hookleftarrow, shift left = -1mm, "R"']
& \Mod{\T^{\c}}/\rlim\mc{S} \end{tikzcd}
\]
where $Q$ is the localisation at the hereditary torsion class $\rlim\mc{S}$ and $R$ is the fully faithful right adjoint to $Q$. Consider the object $Q\bm{y}\1\in\Mod{\T^{\c}}/\rlim\mc{S}$, where $\1$ is the $\otimes$-unit in $\T$. As this quotient category is Grothendieck, there is an injective hull $E(Q\bm{y}\1)$ of $Q\bm{y}\1$. The functor $R$ preserves injectives, hence $RE(Q\bm{y}\1)\simeq\bm{y}E_{\mc{S}}$ for a unique pure injective object $E_\mc{S}$ in $\T$. 

The topology on $\hspec{\T^\c}$ is then defined to have a basis of closed sets given by
\[\mrm{supp}^\msf{h}(x) := \{\mc{B} \in \hspec{\T^\c} : \iHom(x, E_\mc{B}) \neq 0\}\]
where $x$ ranges over the compacts $\T^\c$. We note that this not the definition of $\mrm{supp}^\msf{h}$ given in~\cite{Balmernilpotence}, but they are the same by~\cite[Proposition 4.4]{Balmerhomsupp}. For $\mc{B} \in \hspec{\T^\c}$, the Grothendieck category $\Mod{\T^{\c}}/\rlim\mc{B}$ is called the \emph{homological residue field} (associated to $\mc{B}$). 

Given a homological prime $\mc{B}$ there are two ways to obtain a $\otimes$-closed definable subcategory: firstly, via $\mathscr{D}(\mc{B})$ from the fundamental correspondence (cf., \cref{SerreDef}) or via the assignment $\msf{Def}^\otimes(E_\mc{B})$. The following result shows that these two approaches agree. By a \ti{simple} $\otimes$-closed definable subcategory, we mean one which contains no nonzero proper $\otimes$-closed definable subcategories.

\begin{prop}\label{simple}
Let $\mc{B}\in\hspec{\T^{\c}}$. Then $\msf{Def}^{\otimes}(E_{\mc{B}}) = \mathscr{D}(\mc{B}).$ In particular, $\msf{Def}^{\otimes}(E_{\mc{B}})$ is a simple $\otimes$-closed definable subcategory of $\T$.
\end{prop}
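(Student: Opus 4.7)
The plan is to leverage the order-reversing bijection between $\otimes$-closed definable subcategories of $\T$ and Serre $\otimes$-ideals of $\mod{\T^\c}$ together with the maximality of $\mc{B}$; everything then reduces to showing one inclusion and using a dichotomy.

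First I would establish $\msf{Def}^{\otimes}(E_{\mc{B}}) \subseteq \mathscr{D}(\mc{B})$. Since $\mathscr{D}(\mc{B})$ is a $\otimes$-closed definable subcategory, it suffices to show $E_\mc{B} \in \mathscr{D}(\mc{B})$. By construction $\bm{y}E_\mc{B} = RE(Q\bm{y}\1)$ is injective (as $R$ preserves injectives), and it lies in $(\rlim\mc{B})^\perp$ by \cref{quotientisperp}. So \cref{injcogen} applies directly and delivers $E_\mc{B} \in \mathscr{D}(\mc{B}) \cap \msf{Pinj}(\T)$.

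Next I would obtain the reverse inclusion from maximality. Applying the order-reversing $\mathscr{S}(-)$ to the first inclusion yields $\mc{B} = \mathscr{S}(\mathscr{D}(\mc{B})) \subseteq \mathscr{S}(\msf{Def}^{\otimes}(E_\mc{B}))$. Since $\mc{B}$ is a maximal proper Serre $\otimes$-ideal, there are only two possibilities: either $\mathscr{S}(\msf{Def}^{\otimes}(E_\mc{B})) = \mc{B}$, which upon applying $\mathscr{D}(-)$ gives the desired equality; or $\mathscr{S}(\msf{Def}^{\otimes}(E_\mc{B})) = \mod{\T^\c}$, which corresponds to $\msf{Def}^{\otimes}(E_\mc{B}) = \{0\}$. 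I would rule out the second possibility by noting $E_\mc{B} \neq 0$: if $E_\mc{B}$ were zero then $Q\bm{y}\1 = 0$, forcing $\bm{y}\1 \in \rlim\mc{B} \cap \mod{\T^\c} = \mc{B}$, which contradicts the properness of the $\otimes$-ideal $\mc{B}$ since $\bm{y}\1$ is the $\otimes$-unit of $\mod{\T^\c}$.

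The ``in particular'' clause falls out immediately from the same dichotomy: any nonzero $\otimes$-closed definable subcategory $\mc{D} \subseteq \msf{Def}^{\otimes}(E_\mc{B}) = \mathscr{D}(\mc{B})$ corresponds under $\mathscr{S}$ to a proper Serre $\otimes$-ideal containing $\mc{B}$, which by maximality must be $\mc{B}$ itself, and hence $\mc{D} = \mathscr{D}(\mc{B})$. There is no serious technical obstacle here once \cref{injcogen} and the $\otimes$-closed refinement of the fundamental correspondence are in hand; the only points requiring care are correctly tracking the order-reversal of $\mathscr{S}$ and $\mathscr{D}$, and excluding the degenerate case $E_\mc{B} = 0$.
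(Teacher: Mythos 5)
Your proposal is correct and follows essentially the same route as the paper: both prove $E_{\mc{B}}\in\mathscr{D}(\mc{B})$ via \cref{quotientisperp} and \cref{injcogen}, and then use the maximality of $\mc{B}$ together with the order-reversing bijection of \cref{SerreDef} (in its $\otimes$-closed form) to force equality. Your explicit exclusion of the degenerate case $E_{\mc{B}}=0$ just makes visible a nondegeneracy the paper handles by remarking at the outset that $Q\bm{y}E_{\mc{B}}$ is nonzero, so it is a difference of presentation rather than of method.
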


\begin{proof}
Since $Q\bm{y}E_{\mc{B}}$ is nonzero in $\Mod{\T^{\c}}/\rlim\mc{B}$, it follows that $RQ\bm{y}E_\mc{B} \in (\rlim\mc{B})^\perp$ by \cref{quotientisperp}. By construction of $E_\mc{B}$, we have \[RQ\bm{y}E_\mc{B} = RQRE(Q\bm{y}\1) = RE(Q\bm{y}\1) = \bm{y}E_\mc{B}.\] By \cref{injcogen} we see that $E_\mc{B} \in \mathscr{D}(\mc{B})$, and therefore $\msf{Def}^{\otimes}(E_{\mc{B}})\subseteq\mathscr{D}(\mc{B})$. Since the bijection of \cref{SerreDef} is order-reversing and $\mc{B}$ is a maximal Serre $\otimes$-ideal, we have that $\mathscr{D}(\mc{B})$ is a simple $\otimes$-closed definable subcategory. Thus $\mathscr{D}(\mc{B})=\msf{Def}^{\otimes}(E_{\mc{B}})$ as required.
\end{proof}

By combining the previous result with \cref{determineinjs}, we obtain the following characterisation of the injectives in homological residue fields. 

\begin{cor}
Let $\mc{B}\in\hspec{\T^{\c}}$. Then an object $E\in\Mod{\T^{\c}}/\rlim\mc{B}$ is injective if and only if it is of the form $Q\bm{y}X$ with $X\in\msf{Def}^{\otimes}(E_{\mc{B}})\cap\msf{Pinj}(\T)$. Moreover, there is an equivalence of categories \[Q\circ\bm{y}\colon \msf{Def}^\otimes(E_\mc{B}) \cap \msf{Pinj}(\T) \xrightarrow{\sim} \msf{Inj}(\Mod{\T^\c}/\rlim\mc{B}).\] 
\end{cor}

\begin{rem}
We note that the preceding corollary is a refinement of \cite[Corollary 2.18(c)]{bks}, in which it is shown that every injective object in $\Mod{\T^{\c}}/\rlim\mc{B}$ is of the form $Q\bm{y}E$ for some unique pure injective $E\in\T$. We have restricted the domain to show that $E$ must lie in the small definable subcategory $\msf{Def}^{\otimes}(E_{\mc{B}})$.
\end{rem}

\subsection{Comparing the Ziegler and homological spectrum}\label{sec:topologies}
In this section, we prove our main result, \cref{hspecisgz}, giving a new perspective on the homological spectrum via the Ziegler spectrum. 

Since there are several spectra and topologies involved, we give a schematic relating them in \cref{underlyingsets}, which also shows our line of approach. All the maps below are maps of the underlying sets of topological spaces. We provide a figure showing the topological properties of maps below in \cref{maps}.

\begin{figure}[htb]
\centering
\begin{tikzcd}[column sep = 1.8cm, cells={nodes={draw=black}}]
\msf{Zg}(\T) & \msf{Zg}^{\otimes}(\T) \arrow[l, shorten=2mm, "\mrm{id}" {yshift=3pt}, swap, "\cong" description] \arrow[r, shorten=2mm, "\mrm{quotient}" {yshift=3pt}, twoheadrightarrow] & \msf{KZg}^\otimes(\T) & \closed{\T} \arrow[l, shorten=2mm, "\mrm{inc}" {yshift=3pt}, swap, rightarrowtail] & \hspec{\T^{\c}} \arrow[l, shorten=2mm,"\Phi" {yshift=3pt}, "\t{Prop. }\ref{hspecandKQ}" ' {yshift = -3pt} , swap, "\cong" description]
\end{tikzcd}
\caption{A schematic of the underlying sets of the topological spaces under consideration.}
\label{underlyingsets}
\end{figure}

Firstly, we consider the Kolmogorov quotient of $\msf{Zg}^\otimes(\T)$, which we denote by $\msf{KZg}^\otimes(\T)$. Recall that this is the quotient space $\msf{Zg}^\otimes(\T)/\!\!\sim$ where \[X \sim Y \quad  \text{if and only if} \quad \msf{Def}^\otimes(X) \cap \msf{pinj}(\T) = \msf{Def}^\otimes(Y) \cap \msf{pinj}(\T).\] The Kolmogorov quotient is the best approximation to $\msf{Zg}^\otimes(\T)$ by a $T_0$-space; more precisely, the operation of taking the Kolmogorov quotient is left adjoint to the inclusion of the $T_0$-spaces into all topological spaces, and exhibits the $T_0$-spaces as a reflective subcategory of all spaces.

Let $\mc{B}$ be a homological prime, and consider the associated pure injective object $E_{\mc{B}}$. For any $E \in \msf{Def}^\otimes(E_\mc{B}) \cap \msf{pinj}(\T)$, we have 
\begin{equation}\label{defequal}
\msf{Def}^{\otimes}(E) = \msf{Def}^{\otimes}(E_{\mc{B}})
\end{equation}
by \cref{simple}. Consequently all elements of $\msf{Def}^\otimes(E_\mc{B}) \cap \msf{pinj}(\T)$ become equal in $\msf{KZg}^\otimes(\T)$. As such we have proved the following.

\begin{lem}\label{map}
There is a well-defined map \[\Phi\colon\hspec{\T^{\c}}\to \msf{KZg}^\otimes(\T)\] defined by $\Phi(\mc{B}) = [E]$ where $E$ is any indecomposable pure injective in $\msf{Def}^\otimes(E_\mc{B})$. \qed
\end{lem}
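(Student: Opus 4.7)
The proof will be essentially a direct consequence of Proposition \ref{simple}, which has already done the hard work. The plan is to verify two things: first, that there exists at least one indecomposable pure injective in $\msf{Def}^\otimes(E_\mc{B})$ so that $\Phi(\mc{B})$ makes sense as a point of $\msf{KZg}^\otimes(\T)$; and second, that any two such choices determine the same class in the Kolmogorov quotient.

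For existence, the subcategory $\msf{Def}^\otimes(E_\mc{B})$ contains the nonzero pure injective $E_\mc{B}$, so it is nonzero. Since any nonzero definable subcategory $\mc{D} \subseteq \T$ is recovered from its indecomposable pure injectives via $\mc{D} = \msf{Def}(\mc{D} \cap \msf{pinj}(\T))$, the set $\msf{Def}^\otimes(E_\mc{B}) \cap \msf{pinj}(\T)$ must be nonempty. This gives at least one admissible $E$ for the defining formula $\Phi(\mc{B}) = [E]$.

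For well-definedness, suppose $E \in \msf{Def}^\otimes(E_\mc{B}) \cap \msf{pinj}(\T)$ is any nonzero choice. Then $\msf{Def}^\otimes(E)$ is a nonzero $\otimes$-closed definable subcategory sitting inside $\msf{Def}^\otimes(E_\mc{B})$. By Proposition \ref{simple}, $\msf{Def}^\otimes(E_\mc{B}) = \mathscr{D}(\mc{B})$ is a simple $\otimes$-closed definable subcategory, and hence contains no proper nonzero $\otimes$-closed definable subcategories. Consequently $\msf{Def}^\otimes(E) = \msf{Def}^\otimes(E_\mc{B})$, which is exactly equation \ref{defequal}. Intersecting with $\msf{pinj}(\T)$ preserves this equality, so all admissible choices $E$ yield the same set $\msf{Def}^\otimes(E) \cap \msf{pinj}(\T)$.

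Finally, by the very definition of the equivalence relation defining $\msf{KZg}^\otimes(\T)$, two indecomposable pure injectives are identified exactly when their definable $\otimes$-closures meet $\msf{pinj}(\T)$ in the same set. Thus any two admissible $E$'s represent the same class $[E]$ in $\msf{KZg}^\otimes(\T)$, establishing that $\Phi$ is a well-defined map. There is no substantive obstacle in the argument itself; the only nontrivial input, the simplicity of $\msf{Def}^\otimes(E_\mc{B})$ coming from the maximality of $\mc{B}$, is precisely what was secured in Proposition \ref{simple}.
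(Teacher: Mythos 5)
Your argument is correct and is essentially the paper's own: the well-definedness follows from the simplicity of $\msf{Def}^\otimes(E_\mc{B})$ established in \cref{simple}, which forces $\msf{Def}^\otimes(E) = \msf{Def}^\otimes(E_\mc{B})$ for every indecomposable pure injective $E$ in it, so all choices are identified in the Kolmogorov quotient. Your additional explicit check that $\msf{Def}^\otimes(E_\mc{B}) \cap \msf{pinj}(\T)$ is nonempty (via $\mc{D} = \msf{Def}(\mc{D} \cap \msf{pinj}(\T))$) is a sensible touch that the paper leaves implicit.
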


\begin{rem}\label{notT0}
We note that $\msf{Zg}^\otimes(\T)$ is in general not a $T_0$-space, so that taking the Kolmogorov quotient is necessary in the previous result. Indeed, if the shift $\Sigma$ on $\T$ is not the identity, then for any $E \in \msf{Zg}^\otimes(\T)$ we also have $\Sigma E \in \msf{Zg}^\otimes(\T)$, and these points are not topologically distinguishable since $\msf{Def}^\otimes(E) = \msf{Def}^\otimes(\Sigma E)$.
\end{rem}

\begin{rem}
In the case when each $E_{\mc{B}}$ is itself indecomposable, the map $\Phi$ can be chosen to send $\mc{B}$ to $E_{\mc{B}}$. However, in general it is not known that $E_{\mc{B}}$ is indecomposable, although this holds in many cases, see~\cite{BalmerCameron} for some examples.
\end{rem}

\begin{defn}
Write $\closed{\T}$ for the set of closed points of $\msf{KZg}^\otimes(\T)$. 
\end{defn}

The following result provides the first key input to our proof of \cref{hspecisgz}.
\begin{prop}\label{hspecandKQ}
The map of sets $\Phi\colon\hspec{\T^{\c}}\to \msf{KZg}^\otimes(\T)$ of \cref{map} induces a bijection \[\Phi\colon \hspec{\T^{\c}} \xrightarrow{\cong} \closed{\T}.\]
\end{prop}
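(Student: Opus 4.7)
The plan is to prove the bijectivity of $\Phi$ by first giving an intrinsic characterisation of the closed points of $\msf{KZg}^\otimes(\T)$ and then combining this with the fundamental correspondence (restricted to $\otimes$-closed definable subcategories versus Serre $\otimes$-ideals). Concretely, I claim the following characterisation: a point $[E]\in\msf{KZg}^\otimes(\T)$ is closed if and only if $\msf{Def}^\otimes(E)$ is a simple $\otimes$-closed definable subcategory of $\T$. Granting this, the bijection $\Phi$ is almost immediate from \cref{simple}.

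To establish the characterisation, let $q\colon\msf{Zg}^\otimes(\T)\to\msf{KZg}^\otimes(\T)$ denote the quotient map. Since a set in $\msf{KZg}^\otimes(\T)$ is closed iff its $q$-preimage is closed in $\msf{Zg}^\otimes(\T)$, the singleton $\{[E]\}$ is closed iff
\[q^{-1}(\{[E]\}) \;=\; \{E'\in\msf{pinj}(\T) : \msf{Def}^\otimes(E') = \msf{Def}^\otimes(E)\}\]
equals $\mc{D}\cap\msf{pinj}(\T)$ for some $\otimes$-closed definable $\mc{D}$. If $\msf{Def}^\otimes(E)$ is simple, then for every nonzero pure injective $E'\in\msf{Def}^\otimes(E)$ the strict containment $\msf{Def}^\otimes(E')\subseteq\msf{Def}^\otimes(E)$ forces equality, and so $q^{-1}(\{[E]\})=\msf{Def}^\otimes(E)\cap\msf{pinj}(\T)$, which is closed. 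Conversely, suppose $q^{-1}(\{[E]\})=\mc{D}\cap\msf{pinj}(\T)$ for some $\otimes$-closed definable $\mc{D}$. Since $E\in\mc{D}$ we have $\msf{Def}^\otimes(E)\subseteq\mc{D}$; and since every indecomposable pure injective in $\mc{D}$ is in $\msf{Def}^\otimes(E)$, the fact that definable subcategories are determined by their indecomposable pure injectives gives $\mc{D}\subseteq\msf{Def}^\otimes(E)$. Any nonzero $\otimes$-closed definable $\mc{D}''\subseteq\msf{Def}^\otimes(E)$ must contain some indecomposable pure injective $E'$; then $\msf{Def}^\otimes(E')=\msf{Def}^\otimes(E)$ and so $\mc{D}''=\msf{Def}^\otimes(E)$, proving simplicity.

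With this in hand, $\Phi$ lands in $\closed{\T}$ because \cref{simple} shows $\msf{Def}^\otimes(E_\mc{B})=\mathscr{D}(\mc{B})$ is simple for any $\mc{B}\in\hspec{\T^\c}$. For injectivity, distinct homological primes $\mc{B}_1\neq\mc{B}_2$ correspond to distinct $\otimes$-closed definable subcategories under the order-reversing bijection of \cref{SerreDef} (which restricts to $\otimes$-closed definable subcategories vs Serre $\otimes$-ideals), so $\msf{Def}^\otimes(E_{\mc{B}_1})\neq\msf{Def}^\otimes(E_{\mc{B}_2})$ and hence $\Phi(\mc{B}_1)\neq\Phi(\mc{B}_2)$. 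For surjectivity, take $[E]\in\closed{\T}$, so $\msf{Def}^\otimes(E)$ is a simple $\otimes$-closed definable subcategory. Under the fundamental correspondence restricted to the $\otimes$-setting, simple $\otimes$-closed definable subcategories are exactly the $\mathscr{D}(\mc{B})$ for maximal proper Serre $\otimes$-ideals $\mc{B}$, i.e.\ for $\mc{B}\in\hspec{\T^\c}$. Taking the homological prime $\mc{B}$ with $\mathscr{D}(\mc{B})=\msf{Def}^\otimes(E)$ and applying \cref{simple} yields $\msf{Def}^\otimes(E_\mc{B})=\msf{Def}^\otimes(E)$, so $\Phi(\mc{B})=[E]$.

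The only step requiring genuine work is the characterisation of closed points in the Kolmogorov quotient; everything after that is bookkeeping with the fundamental correspondence and \cref{simple}. The subtlety in that step is having to appeal twice to the fact that definable subcategories are detected on their indecomposable pure injectives, once to pass from the preimage condition back to an inclusion of definable subcategories, and once to produce an indecomposable pure injective inside a candidate proper subsubcategory in order to contradict simplicity.
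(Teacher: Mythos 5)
Your argument is correct, and its overall skeleton (well-definedness into the closed points via simplicity, then injectivity, then surjectivity) matches the paper's, but two steps are handled differently. First, you isolate as an explicit lemma the characterisation that $[E]$ is closed in $\msf{KZg}^\otimes(\T)$ if and only if $\msf{Def}^\otimes(E)$ is simple; the paper never states this, but uses each direction implicitly (simplicity $\Rightarrow$ closed when showing $\Phi$ lands in $\closed{\T}$, and closedness $\Rightarrow$ maximality of $\mathscr{S}(\msf{Def}^\otimes(X))$ in the surjectivity step), and your proof of it is sound, resting only on the quotient topology and the fact that a definable subcategory is determined by (and, if nonzero, contains) its indecomposable pure injectives. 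Second, and more substantively, your injectivity argument is genuinely different: you deduce $\mc{B}=\mc{B}'$ from $\msf{Def}^\otimes(E_\mc{B})=\msf{Def}^\otimes(E_{\mc{B}'})$ by combining \cref{simple} with the injectivity of $\mathscr{D}(-)$ in the restricted fundamental correspondence (\cref{SerreDef} together with the $\otimes$-closed refinement quoted from Wagstaffe), whereas the paper argues via the $\otimes$-closed definable subcategories $\mc{A}_f=\{Y:\bm{y}Y\otimes f\simeq 0\}$ and the recovery formula $\mc{B}=\msf{ker}(\bm{y}E_\mc{B}\otimes-)\cap\mod{\T^\c}$ from \cite[Theorem 3.5]{bks}. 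Your route is shorter and keeps everything inside the correspondence machinery already set up; the paper's route is more hands-on, exhibiting concretely how the homological prime is recovered from its associated pure injective without leaning again on the bijection. The surjectivity arguments are essentially identical, though you compress the dictionary ``simple $\otimes$-closed definable $\leftrightarrow$ maximal proper Serre $\otimes$-ideal'' into one sentence; to be fully rigorous one should note (as the paper implicitly does) that the zero definable subcategory corresponds to all of $\mod{\T^\c}$, so that proper Serre $\otimes$-ideals have nonzero associated definable subcategories. This is routine and does not constitute a gap.
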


\begin{proof}
Let us first show that the image of the map lies in $\closed{\T}$. Let $\mc{B}$ be a homological prime and let $E$ be a representative for $\Phi(\mc{B})$. To show that $\Phi(\mc{B})$ is closed in $\msf{KZg}^\otimes(\T)$ we must show that if $P \in \msf{Def}^\otimes(E) \cap \msf{pinj}(\T)$, then $P \sim E$. 
So suppose that $P\in\msf{Def}^{\otimes}(E) \cap \msf{pinj}(\T)$. By \cref{simple} together with \cref{defequal}, we know that $\msf{Def}^{\otimes}(E)$ contains no non-trivial proper $\otimes$-closed definable subcategories. Consequently $\msf{Def}^{\otimes}(E)=\msf{Def}^{\otimes}(P)$. Thus $E\sim P$ as required. Therefore we have a well-defined map $\Phi\colon \hspec{\T^\c} \to \closed{\T}$. 

For injectivity, suppose that $\Phi(\mc{B}) = \Phi(\mc{B}')$. It then follows from \cref{defequal} that $\msf{Def}^\otimes(E_\mc{B}) = \msf{Def}^\otimes(E_{\mc{B}'})$. Let $f \in \mod{\T^\c}$, and consider the set \[\mathcal{A}_f := \{Y \in \T : \bm{y}Y \otimes f \simeq 0\}.\] As $f$ is finitely presented, $\mathcal{A}_f$ is a $\otimes$-closed definable subcategory of $\T$. Indeed, we may consider the functor $-\otimes f:\Mod{\T^{\c}}\to\Mod{\T^{\c}}$. This is a definable functor in the sense of \cite[\S 13]{dac}, and therefore its kernel $\mc{K} := \msf{ker}(- \otimes f)$ is definable by \cite[Proposition 13.3]{dac}. Clearly $\mc{K}$ is also $\otimes$-closed in $\Mod{\T^{\c}}$. By \cref{purityflats}\ref{purityflats3}, we see that $\mc{A}_{f}$ is the unique definable subcategory of $\T$ corresponding to $\mc{K} \cap \Flat{\T^\c}$, and as $\bm{y}$ is strong monoidal, it follows that $\mc{A}_{f}$ is a $\otimes$-closed definable subcategory, as claimed.

Now, if $\bm{y}E_{\mc{B}}\otimes f\simeq 0$, then $E_{\mc{B}}\in\mc{A}_{f}$. As the latter is a $\otimes$-closed definable subcategory of $\T$, it follows that $\msf{Def}^{\otimes}(E_{\mc{B}})\subseteq \mc{A}_{f}$, but immediately this yields that $\msf{Def}^{\otimes}(E_{\mc{B}'})\subseteq\mc{A}_{f}$; hence $E_{\mc{B}'}\in\mc{A}_{f}$ showing that $\bm{y}E_{\mc{B}'}\otimes f\simeq 0$. Consequently we see that $\bm{y}E_{\mc{B}}\otimes f\simeq 0$ if and only if $\bm{y}E_{\mc{B}'}\otimes f\simeq 0$ for any $f\in\mod{\T^{\c}}$. But by~\cite[Theorem 3.5]{bks}, we know that $\mc{B} = \msf{ker}(\bm{y}E_\mc{B} \otimes -) \cap \mod{\T^\c}$ and similarly for $\mc{B}'$. Hence, this shows that $\mc{B} = \mc{B}'$ as desired.

Lastly, we show that the map is surjective. Let $X$ be a closed point in $\msf{KZg}^\otimes(\T)$ and consider the $\otimes$-closed definable subcategory $\msf{Def}^{\otimes}(X)$. Let $\mc{B}=\mathscr{S}(\msf{Def}^{\otimes}(X))$ denote the associated Serre $\otimes$-ideal of $\mod{\T^\c}$. If $\mc{B}\subseteq\mc{A}$, with $\mc{A}$ a Serre $\otimes$-ideal, then there is a reverse inclusion $\mathscr{D}(\mc{A})\subseteq\msf{Def}^{\otimes}(X)=\mathscr{D}(\mc{B})$ on the corresponding $\otimes$-closed definable subcategories. As $X$ is a closed point, we therefore have $\mathscr{D}(\mc{A}) =\msf{Def}^\otimes(X)$ and hence $\mc{B} = \mc{A}$. Therefore $\mc{B}$ is a maximal Serre $\otimes$-ideal of $\mod{\T^\c}$, and hence $\mc{B} \in \hspec{\T^{\c}}$. Thus we have obtained a candidate homological prime, whose image under $\Phi$ we now show is $X$. We have $\msf{Def}^\otimes(X) = \mathscr{D}(\mc{B}) = \msf{Def}^\otimes(E_\mc{B})$ by \cref{simple}, and hence for any $E \in \msf{Def}^\otimes(E_\mc{B})$ we see that $\msf{Def}^\otimes(E) = \msf{Def}^\otimes(X)$ by \cref{defequal}, so $\Phi(\mc{B}) = X$ in $\msf{KZg}^\otimes(\T)$ as required.
\end{proof}

\begin{rem}
From the proof of the previous result, one sees that the inverse map $\Phi^{-1}\colon\closed{\T}\to \hspec{\T^{\c}}$ is given by
\[
X\mapsto \mathscr{S}(\msf{Def}^{\otimes}(X)).
\]
\end{rem}

\begin{rem}\label{subspaceT1}
One may hope that the map given above is a homeomorphism when $\closed{\T}$ is equipped with the subspace topology, but this cannot be the case in general. Note that by definition $\closed{\T}$ would then be a $T_{1}$-space; that is every point in it is closed. If $\hspec{\T^{\c}}$ were homeomorphic to it, it would follow that $\hspec{\T^{\c}}$ would then also be $T_{1}$ and hence $T_0$. The $T_{0}$ property would ensure that the Balmer spectrum $\msf{Spc}(\T^{\c})$ were homeomorphic to $\hspec{\T^{\c}}$~\cite[Proposition 4.5]{BHScomparison}. For any commutative ring $R$, there is an homeomorphism $\msf{Spec}(R)\simeq \msf{Spc}(\D(R)^{\c})$. Were the latter $T_{1}$, then the former would also be, but $\msf{Spec}(R)$ is known to be $T_{1}$ if and only if $R$ has Krull dimension zero. Consequently, as claimed, there cannot be a homeomorphism between $\closed{\T}$ and $\hspec{\T^{\c}}$ in general when the former is equipped with the subspace topology. Below we show that, with $\closed{\T}$ retopologised with the $\msf{GZ}$-topology, $\Phi$ does induce a homeomorphism $\hspec{\T^{\c}}\to\closed{\T}^\msf{GZ}$, see \cref{hspecisgz}.
\end{rem}

In light of the previous remark, we want to consider an alternative topology on the Ziegler spectrum. As such, we state the following for motivation. Since the result is only for motivation we omit the proof and note that it follows as in the non-tensor case~\cite[Corollary 4.5]{krspec2}.
\begin{lem}\label{compactopens}
The compact opens in $\msf{Zg}^\otimes(\T)$ are precisely the sets of the form \[(F)_\otimes = \{X \in \msf{pinj}(\T) : F(C \otimes X) \neq 0 \text{ for some $C \in \T^\c$}\}\] where $F \in \msf{Coh}(\T)$.
\end{lem}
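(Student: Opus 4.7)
The plan is to exhibit the sets $(F)_\otimes$ as a basis of open sets for $\msf{Zg}^\otimes(\T)$ that is closed under finite unions, and then argue each such basic open is quasi-compact; the identification of compact opens with these basic opens follows formally.

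First, I would verify that $\{(F)_\otimes : F \in \msf{Coh}(\T)\}$ is a basis by direct inspection of \cref{definabletensorcharacterisation}: a closed set of $\msf{Zg}^\otimes(\T)$ has the form $\mc{D} \cap \msf{pinj}(\T)$ where $\mc{D} = \{X \in \T : F_i(C \otimes X) = 0 \text{ for all } C \in \T^\c, i \in I\}$ for some set of coherent functors $\{F_i\}_I$. Hence the complementary open equals $\bigcup_{i \in I}(F_i)_\otimes$. Next, I would observe that the collection is closed under finite unions, since $(F)_\otimes \cup (G)_\otimes = (F \oplus G)_\otimes$ because $F(C \otimes X) \oplus G(C \otimes X) \neq 0$ for some $C \in \T^\c$ if and only if either $F(C \otimes X) \neq 0$ or $G(C \otimes X) \neq 0$ for some such $C$, and the direct sum of coherent functors is coherent.

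The key step is to show that each $(F)_\otimes$ is quasi-compact. Suppose $(F)_\otimes \subseteq \bigcup_{j \in J}(G_j)_\otimes$, and let $\mc{D}$, $\mc{D}_j$ be the $\otimes$-closed definable subcategories corresponding to the complementary closed sets. The containment of opens translates to $\bigcap_j \mc{D}_j \subseteq \mc{D}$; applying the order-reversing bijection of \cref{SerreDef} restricted to Serre $\otimes$-ideals, we obtain a containment of Serre $\otimes$-ideals $\mathscr{S}(\mc{D}) \subseteq \mathscr{S}(\bigcap_j \mc{D}_j)$, where the right-hand side is the Serre $\otimes$-ideal of $\mod{\T^\c}$ generated by $\{\mathscr{S}(\mc{D}_j)\}_{j\in J}$. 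Via the antiequivalence $(-)^\vee$, this means $F \in \msf{Coh}(\T)$ lies in the Serre $\otimes$-ideal generated by $\{G_j\}_{j \in J}$. Since Serre $\otimes$-ideals are built from finite direct sums, subobjects, quotients, extensions, and tensors with compacts (all of which involve only finitely many generators at a time), $F$ already belongs to the Serre $\otimes$-ideal generated by some finite subset $\{G_{j_1}, \ldots, G_{j_n}\}$. Unwinding this back across the correspondence yields $(F)_\otimes \subseteq \bigcup_{k=1}^n (G_{j_k})_\otimes$, proving quasi-compactness.

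Finally, given any quasi-compact open $U$ in $\msf{Zg}^\otimes(\T)$, express it as a union of basic opens using the basis above. Quasi-compactness of $U$ reduces this to a finite union, which by the second step is of the form $(F)_\otimes$ for a single coherent functor $F$. The principal obstacle is the finite generation argument for Serre $\otimes$-ideals inside $\msf{Coh}(\T)$; once this is laid out carefully (mirroring the non-tensor argument in \cite[Corollary 4.5]{krspec2} and using that tensoring with a compact is a finite operation in the relevant sense), the remainder is formal.
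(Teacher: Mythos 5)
Your argument is correct, and it is essentially the proof the paper has in mind: the paper omits the proof entirely, noting only that it "follows as in the non-tensor case" \cite[Corollary 4.5]{krspec2}, and your write-up is precisely that adaptation to the $\otimes$-setting. The three steps — the $(F)_\otimes$ form a basis closed under finite unions via $F\oplus G$, quasi-compactness via the order-reversing bijection of \cref{SerreDef} (sending intersections of $\otimes$-closed definable subcategories to joins of Serre $\otimes$-ideals) together with the directed-union finite-generation argument, and the formal conclusion for arbitrary compact opens — are all sound.
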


Although $\msf{Zg}^\otimes(\T)$ is not a spectral space in general (see \cref{notT0}), we may mimic the construction of the Hochster dual, to define a space $\msf{Zg}^\otimes(\T)^\vee$. This assignment is just no longer a duality. In the space $\msf{Zg}^\otimes(\T)^\vee$, a basis of open sets is given by the complements of the compact open subsets of $\msf{Zg}^\otimes(\T)$, i.e., $(F)_\otimes^c$ is a basis of opens for $\msf{Zg}^\otimes(\T)^\vee$. 

Motivated by the dual topology on the Ziegler spectrum, also see \cref{GZperspective} for further discussion, we now define two Zariski-style topologies on $\closed{\T}$:
\begin{enumerate}
\item we define the \emph{dual topology}, $\closed{\T}^\vee$, to have a basis of open sets given by 
\[
(F)_\otimes^c=\{X \in \closed{\T} : F(C \otimes X) =0 \text{ for all $C \in \T^\c$}\}
\]
as $F$ ranges over $\msf{Coh}(\T)$;
\item we define the \emph{$\msf{GZ}$-topology}, $\closed{\T}^\msf{GZ}$, to have a basis of open sets given by \[[A]_\otimes = \{X \in \closed{\T} : \iHom(A,X) = 0\}\] as $A$ ranges over $\T^\c$.
\end{enumerate}
In both of these cases, we emphasize that we first take the closed points of the Kolmogorov quotient in the usual Ziegler $\otimes$-topology, and then we equip this subset with the alternative topologies. We now prove that these are actually well-defined topologies. The $\msf{GZ}$-topology will play a more crucial role in our study, but it is helpful to consider the dual topology too for comparative purposes.

\begin{lem}
Both the above topologies are well-defined on $\closed{\T}$.
\end{lem}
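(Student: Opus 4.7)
The plan is to verify, for each of the two proposed topologies, that the given family of sets forms a basis for a topology on $\closed{\T}$. Recall that a collection $\mc{B}$ of subsets of a set $X$ forms a basis for some topology on $X$ if and only if $X=\bigcup_{B\in\mc{B}}B$ and for any $B_1,B_2\in\mc{B}$, the intersection $B_1\cap B_2$ is a union of elements of $\mc{B}$. In both cases I will check something stronger: that the whole space is itself a basis element, and that the intersection of any two basis elements is again a basis element.

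For the dual topology with basic opens $(F)_\otimes^c$ as $F$ ranges over $\msf{Coh}(\T)$, first note that the zero functor is trivially coherent, and $(0)_\otimes^c=\closed{\T}$, so the whole space is a basis element. For intersections, given $F,G\in\msf{Coh}(\T)$, the direct sum $F\oplus G$ is again coherent since one can assemble presentations by taking coproducts in each degree, and evaluation commutes with finite direct sums. Then $(F\oplus G)(C\otimes X)=F(C\otimes X)\oplus G(C\otimes X)$ vanishes if and only if both summands vanish, so $(F)_\otimes^c\cap (G)_\otimes^c=(F\oplus G)_\otimes^c$ is a basic open.

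For the $\msf{GZ}$-topology with basic opens $[A]_\otimes$ as $A$ ranges over $\T^\c$, the zero object $0\in\T^\c$ gives $\iHom(0,X)=0$ for all $X$, hence $[0]_\otimes=\closed{\T}$ is a basis element. For intersections, given $A,B\in\T^\c$, the direct sum $A\oplus B$ is compact since $\T^\c$ is closed under finite direct sums, and since $\iHom(-,X)$ is additive in the first variable we have the isomorphism $\iHom(A\oplus B,X)\simeq \iHom(A,X)\oplus \iHom(B,X)$, so this vanishes if and only if both $\iHom(A,X)$ and $\iHom(B,X)$ vanish. Therefore $[A]_\otimes\cap [B]_\otimes=[A\oplus B]_\otimes$ is again basic.

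Both verifications are routine; there is no real obstacle, and the only subtle point worth flagging is ensuring that the closure properties used (coherent functors closed under finite direct sums, compacts closed under finite direct sums) are in place, both of which are standard. No interaction with the subspace structure coming from $\msf{KZg}^\otimes(\T)$ is needed for this well-definedness statement, since we are simply equipping the set $\closed{\T}$ with a new topology generated by the explicit families given.
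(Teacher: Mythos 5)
Your proof verifies the wrong thing. The content of this lemma is not that the two families satisfy the basis axioms (covering and closure under intersection up to unions) --- those checks are routine, and indeed the paper does not even record them. The issue is that the basic sets are not obviously well-defined \emph{subsets} of $\closed{\T}$ in the first place: a point of $\closed{\T}$ is a closed point of the Kolmogorov quotient $\msf{KZg}^\otimes(\T)$, i.e.\ an equivalence class of indecomposable pure injectives under $X\sim Y$ iff $\msf{Def}^\otimes(X)\cap\msf{pinj}(\T)=\msf{Def}^\otimes(Y)\cap\msf{pinj}(\T)$, whereas the membership conditions ``$F(C\otimes X)=0$ for all $C\in\T^\c$'' and ``$\iHom(A,X)=0$'' are stated in terms of a chosen representative $X\in\T$. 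One must therefore show these conditions are invariant under $\sim$. Your closing sentence, asserting that no interaction with the quotient structure coming from $\msf{KZg}^\otimes(\T)$ is needed, dismisses precisely the point the lemma is there to address; without it the sets $(F)_\otimes^c$ and $[A]_\otimes$ are not even defined, so the basis verifications have nothing to apply to.

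The missing argument runs as follows (and is the paper's proof). For the dual topology: if $X\sim Y$ and $X\in(F)_\otimes^c$, consider $\mc{F}=\{Z\in\T : F(C\otimes Z)=0 \text{ for all } C\in\T^\c\}$, which is a $\otimes$-closed definable subcategory by \cref{definabletensorcharacterisation}; then $\msf{Def}^\otimes(X)\subseteq\mc{F}$, and since $Y\sim X$ forces $\msf{Def}^\otimes(Y)=\msf{Def}^\otimes(X)$, also $Y\in\mc{F}$, i.e.\ $Y\in(F)_\otimes^c$. For the $\msf{GZ}$-topology one reduces to this case by noting $[A]_\otimes=(\Hom_\T(A,-))_\otimes^c$, because $\Hom_\T(A,C\otimes X)=0$ for all $C\in\T^\c$ if and only if $\iHom(A,X)=0$, as $\T$ is compactly generated. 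Your intersection computations $(F)_\otimes^c\cap(G)_\otimes^c=(F\oplus G)_\otimes^c$ and $[A]_\otimes\cap[B]_\otimes=[A\oplus B]_\otimes$ are correct and could be kept as a supplementary remark, but they do not prove the statement.
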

\begin{proof}
First observe that the $\msf{GZ}$-topology is a subtopology of the dual topology: when $F = \Hom_\T(A,-)$ we have $(F)_\otimes^c = [A]_\otimes$ as $\Hom_\T(A, C \otimes X) = 0$ for all $C \in \T^\c$ if and only if $\iHom(A,X) = 0$ as $\T$ is compactly generated. Therefore it suffices to prove the claim only for the dual topology. Accordingly, let $X\in\closed{\T}$ and suppose $Y$ is another point in $\msf{pinj}(\T)$ that is identified with $X$ in $\closed{\T}$. Suppose that $X\in (F)_{\otimes}^{c}$ and consider $\mc{F}=\{Y\in\T:F(C\otimes Y)=0\t{ for all }C\in\T^{\c}\}$, the corresponding $\otimes$-closed definable subcategory in $\T$. As $X\in\mc{F}$, it follows that $\msf{Def}^{\otimes}(X)\subseteq\mc{F}$. But, as $Y\sim X$, we also have $\msf{Def}^{\otimes}(Y)\subseteq\mc{F}$. Consequently $Y$ also lies in $(F)_{\otimes}^{c}$, as required. \end{proof}

\begin{rem}\label{GZperspective}
Let us motivate the above topologies, and show that they arise quite naturally. The letters $\msf{GZ}$ stand for Gabriel-Zariski. Recall from \cite[\S 13.1]{krspec} that for a locally coherent Grothendieck category $\A$, the Gabriel-Zariski (sometimes just Gabriel) spectrum has underlying set $\msf{inj}(\A)$, the indecomposable injective objects in $\A$, while a basis of open sets is given by
\[
[A]=\{X\in\msf{inj}(\A):\Hom_{\A}(A,X)=0\}
\]
where $A$ runs over $\msf{fp}(\A)$. It is known that, for such categories, the Hochster dual of $\msf{inj}(\A)$ equipped with the Ziegler topology is the Gabriel-Zariski spectrum, see \cite[Theorem 14.1.6]{psl}. 

As we have seen in \cref{purityflats}, the Ziegler spectrum of $\T$ is homeomorphic to the Ziegler spectrum of $\Flat{\T^{\c}}$, and this also restricts to a homeomorphism when we consider definable $\otimes$-closed subcategories. Consequently, should we wish to take the Hochster dual of the $\otimes$-closed Ziegler spectrum on $\T$, or equivalently $\Flat{\T^{\c}}$, we should consider the equivalent of the $\otimes$-closed Gabriel-Zariski spectrum of $\Flat{\T^{\c}}$. Note that $\Flat{\T^{\c}}$ is usually very far from abelian. However it is finitely accessible with a set of indecomposable injectives, so one can, using the usual definition, define the $\otimes$-closed Gabriel-Zariski spectrum on it; this is precisely the $\msf{GZ}$-topology defined above.

Of course $\Mod{\T^{\c}}$ is a locally coherent Grothendieck category. Were we to consider the $\otimes$-closed Gabriel-Zariski spectrum on $\Mod{\T^{\c}}$, we would obtain the topology $(-)^{\vee}$. It is just a striking property of $\Mod{\T^{\c}}$, proved in \cite[Theorem 1.9]{krsmash}, that the sets of indecomposable injective objects in $\Mod{\T^\c}$ and $\Flat{\T^{\c}}$ coincide. Consequently the Gabriel-Zariski spectrum of $\Mod{\T^\c}$ and $\Flat{\T^\c}$ have the same underlying sets, but $(-)^{\vee}$ is a finer topology than $\msf{GZ}$. 
\end{rem}

Homological support is tested against the pure injective objects $E_{\mc{B}}$. However, we show it is equivalent to test on any indecomposable pure injective $E \in \msf{Def}^\otimes(E_\mc{B})$. 

\begin{lem}\label{ihomequality}
Let $\mc{B} \in \hspec{\T^\c}$, and let $E$ be an indecomposable pure injective in $\msf{Def}^\otimes(E_\mc{B})$. Then for any $A \in \T^\c$, we have $\iHom(A, E_\mc{B}) = 0$ if and only if $\iHom(A, E) = 0$.
\end{lem}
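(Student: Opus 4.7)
The plan is to exhibit $\{X \in \T : \iHom(A,X) = 0\}$ as a $\otimes$-closed definable subcategory of $\T$ and then leverage the fact that $E_\mc{B}$ and $E$ generate the same $\otimes$-closed definable subcategory.

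More precisely, I would start by recalling that \cref{simple} together with \eqref{defequal} gives $\msf{Def}^\otimes(E_\mc{B}) = \mathscr{D}(\mc{B}) = \msf{Def}^\otimes(E)$, since $E$ is assumed to be an indecomposable pure injective in the simple $\otimes$-closed definable subcategory $\msf{Def}^\otimes(E_\mc{B})$. Thus it suffices to prove that \[\mc{A} := \{X \in \T : \iHom(A,X) = 0\}\] is a $\otimes$-closed definable subcategory of $\T$, for then the containment $E_\mc{B} \in \mc{A}$ is equivalent to $\msf{Def}^\otimes(E_\mc{B}) \subseteq \mc{A}$, which is equivalent to $\msf{Def}^\otimes(E) \subseteq \mc{A}$, which is equivalent to $E \in \mc{A}$.

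To show $\mc{A}$ is definable, I would use compact generation of $\T$ together with the tensor-hom adjunction: $\iHom(A,X) = 0$ if and only if $\Hom_\T(C, \iHom(A,X)) = 0$ for all $C \in \T^\c$, if and only if $\Hom_\T(C \otimes A, X) = 0$ for every $C \in \T^\c$. Since $A$ is rigid and $C$ is compact, $C \otimes A$ is compact, so each functor $\Hom_\T(C \otimes A, -)$ is coherent. Hence $\mc{A}$ is cut out by a set of coherent functors and is therefore definable. For the $\otimes$-closure, rigidity of $A$ gives the natural isomorphism $\iHom(A, X \otimes Y) \simeq \iHom(A, X) \otimes Y$, so $X \in \mc{A}$ forces $X \otimes Y \in \mc{A}$ for all $Y \in \T$; alternatively, one may appeal directly to \cref{deftensoridealcompacts} after checking closure under tensoring with compacts.

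No step here is a significant obstacle: the content is purely formal manipulation once the identification $\msf{Def}^\otimes(E_\mc{B}) = \msf{Def}^\otimes(E)$ is in place. The only point requiring some care is the verification that $\mc{A}$ is $\otimes$-closed, and even there the rigidity of $A$ makes the check immediate.
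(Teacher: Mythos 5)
Your proposal is correct and follows essentially the same route as the paper: both reduce the statement to the fact that $\msf{ker}(\iHom(A,-))$ is a $\otimes$-closed definable subcategory and then invoke $\msf{Def}^\otimes(E) = \msf{Def}^\otimes(E_\mc{B})$ from \cref{simple} and \eqref{defequal}. The only difference is that you verify definability and $\otimes$-closure by hand (via the adjunction $\Hom_\T(C,\iHom(A,X)) \simeq \Hom_\T(C \otimes A, X)$ and rigidity of $A$), whereas the paper obtains it directly from \cref{definabletensorcharacterisation} applied to the coherent functor $\Hom_\T(A,-)$ -- the same computation in slightly different packaging.
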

\begin{proof}
The set $\msf{ker}(\iHom(A,-)) = \{X \in \T : \iHom(A,X) = 0\}$ is a $\otimes$-closed definable subcategory by \cref{definabletensorcharacterisation}. As $\msf{Def}^\otimes(E) = \msf{Def}^\otimes(E_\mc{B})$ by \cref{defequal}, the result follows.
\end{proof}

We may now relate these spaces to the homological spectrum. 
\begin{prop}\label{closedcts}
The map $\Phi\colon \hspec{\T^\c} \to \closed{\T}^\msf{GZ}$ is closed and continuous. 
\end{prop}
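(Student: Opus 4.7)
The plan is to exploit the fact that the topology on $\hspec{\T^{\c}}$ and the $\msf{GZ}$-topology on $\closed{\T}$ are defined by bases indexed by the same set $\T^{\c}$, and to show that these basic sets correspond to one another under $\Phi$ via complementation.

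First I would compute $\Phi^{-1}([A]_{\otimes})$ for each $A\in\T^{\c}$. By \cref{map}, $\Phi(\mc{B}) = [E]$ for any indecomposable pure injective $E\in\msf{Def}^{\otimes}(E_{\mc{B}})$, and \cref{ihomequality} gives $\iHom(A,E) = 0$ if and only if $\iHom(A,E_{\mc{B}}) = 0$. Unwinding definitions yields
\[
\Phi^{-1}([A]_{\otimes}) = \{\mc{B}\in\hspec{\T^{\c}} : \iHom(A,E_{\mc{B}}) = 0\} = \hspec{\T^{\c}}\setminus\mrm{supp}^{\msf{h}}(A),
\]
which is open in $\hspec{\T^{\c}}$. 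Since $\{[A]_{\otimes}\}_{A\in\T^{\c}}$ is a basis of opens for $\closed{\T}^{\msf{GZ}}$, continuity follows.

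For closedness, use that $\Phi$ is a bijection by \cref{hspecandKQ}: applying $\Phi$ to both sides of the above identity gives $\Phi(\mrm{supp}^{\msf{h}}(A)) = \closed{\T}\setminus[A]_{\otimes}$, which is closed in $\closed{\T}^{\msf{GZ}}$. As the sets $\mrm{supp}^{\msf{h}}(A)$ form a basis of closed sets for $\hspec{\T^{\c}}$, every closed set of $\hspec{\T^{\c}}$ is an intersection of such basic closed sets; since $\Phi$ is a bijection it commutes with arbitrary intersections, so the image of any closed set is an intersection of closed sets in $\closed{\T}^{\msf{GZ}}$, hence closed.

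The main conceptual input has already been absorbed into \cref{ihomequality} and the bijectivity in \cref{hspecandKQ}; once those are in hand, the statement is essentially a bookkeeping exercise verifying that the subbasic opens of the two topologies correspond under $\Phi$, with nothing further beyond the set-theoretic identities above.
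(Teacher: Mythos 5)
Your proof is correct and takes essentially the same route as the paper: continuity by identifying $\Phi^{-1}([A]_\otimes)$ with $\mrm{supp}^\msf{h}(A)^c$ via \cref{ihomequality}, and closedness by applying the bijectivity from \cref{hspecandKQ} to the basic closed sets $\mrm{supp}^\msf{h}(A)$. Your extra remark that a bijection commutes with arbitrary intersections, so it suffices to check basic closed sets, merely makes explicit a step the paper leaves implicit.
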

\begin{proof}
By \cref{ihomequality}, the preimage of $[A]_\otimes$ is 
$\{\mc{B} \in \hspec{\T^\c} : \iHom(A, E_\mc{B}) = 0\}$, which is equal to $\mrm{supp}^\msf{h}(A)^c$. This is an open set in the homological spectrum by definition and hence the map is continuous. 

We now prove that the map is closed. Let $C \in \T^\c$, and consider a basic closed set $\mrm{supp}^\msf{h}(C) = \{\mc{B} \in \hspec{\T^\c} : \iHom(C, E_\mc{B}) \neq 0\}$ of the homological spectrum. We have 
$\Phi(\mrm{supp}^\msf{h}(C)) = \{E_\mc{B} : \iHom(C,E_\mc{B}) \neq 0\}$
by \cref{ihomequality}. Since $\Phi$ is a bijection by \cref{hspecandKQ}, we see that $\Phi(\mrm{supp}^\msf{h}(C)) = [C]_\otimes^c$ which is closed in $\closed{\T}^\msf{GZ}$ by definition, and as such the map $\Phi\colon \hspec{\T^\c} \to \closed{\T}^\msf{GZ}$ is closed, which completes the proof.
\end{proof}

The following is an immediate consequence of \cref{closedcts} and \cref{hspecandKQ}.

\begin{thm}\label{hspecisgz}
The map $\Phi\colon \hspec{\T^\c} \to \closed{\T}^\msf{GZ}$ is a homeomorphism. \qed
\end{thm}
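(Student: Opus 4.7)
The plan is to combine the two immediately preceding results, \cref{hspecandKQ} and \cref{closedcts}, and observe that a continuous closed bijection between topological spaces is automatically a homeomorphism. No further input from the paper is needed: all the hard work has already been done in setting up the map $\Phi$, identifying its image as the closed points of the Kolmogorov quotient, and comparing the closed/open subbases on the two sides.

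More precisely, I would first invoke \cref{hspecandKQ} to conclude that $\Phi\colon \hspec{\T^{\c}} \to \closed{\T}^{\msf{GZ}}$ is a bijection of the underlying sets (the topology on the target plays no role for this step, since $\closed{\T}$ and $\closed{\T}^{\msf{GZ}}$ have the same underlying set by construction). Next, I would invoke \cref{closedcts}, which states that $\Phi$ is both continuous and closed with respect to the $\msf{GZ}$-topology.

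Finally, to finish, note that for a continuous bijection the property of being a closed map is equivalent to the inverse being continuous: if $\Phi$ sends closed sets to closed sets, then for every closed $C \subseteq \hspec{\T^{\c}}$ the preimage $(\Phi^{-1})^{-1}(C) = \Phi(C)$ is closed in $\closed{\T}^{\msf{GZ}}$, so $\Phi^{-1}$ is continuous. Combined with the continuity of $\Phi$ itself, this gives that $\Phi$ is a homeomorphism.

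There is no substantive obstacle at this point — the theorem is essentially a formal corollary of the two propositions stated in the excerpt. All of the genuine work was concentrated earlier: establishing that $\Phi$ lands in the closed points (using \cref{simple} and the maximality of homological primes), checking bijectivity via the explicit inverse $X \mapsto \mathscr{S}(\msf{Def}^{\otimes}(X))$, and matching the subbasic closed/open sets $\msf{supp}^{\msf{h}}(A)$ with $[A]_{\otimes}^{c}$ via \cref{ihomequality}.
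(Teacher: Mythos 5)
Your proposal is correct and follows exactly the paper's own argument: the paper likewise deduces \cref{hspecisgz} directly from the bijection of \cref{hspecandKQ} combined with the continuity and closedness established in \cref{closedcts}, using the standard fact that a closed continuous bijection is a homeomorphism.
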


\begin{rem}
It is unclear if the map $\hspec{\T^\c} \to \closed{\T}^\vee$ is continuous. If it were, then its inverse would be closed, and as such the composite $\closed{\T}^\vee \to \hspec{\T^\c} \to \closed{\T}^\msf{GZ}$ (which is just the identity) would be closed. We refer the reader to \cref{seperation} for further discussion around this.
\end{rem}

\begin{figure}[htb]
\centering
\begin{tikzcd}
{\closed{\T}^\vee} \ar[rr, "\mrm{id}"', yshift=-1mm] & & {\closed{\T}^\msf{GZ}} \ar[ll, dashed, yshift=1mm, "\mrm{id}"' {yshift=3pt}] \ar[ll, dashed, yshift=1mm, "\bullet" description] \\
& {\hspec{\T^\c}} \ar[ul, dashed, "{\Phi}" {yshift=-1pt}] \ar[ul, dashed, "\bullet" description, pos=0.5] \ar[ur, "{\Phi}"' {yshift=-1pt}] \ar[ur, "\bullet" description, pos=0.5]  &
\end{tikzcd}
\caption{Solid maps are continuous, whereas dashed maps are only functions. A bullet $\bullet$ on the map indicates that the map is closed (and hence open since all the maps are bijections).}
\label{maps}
\end{figure}

\subsection{Separation properties}\label{seperation}
In this section we investigate the separation properties of the dual topology and the $\msf{GZ}$-topologies described above, and then relate these to the Balmer spectrum. Firstly, we consider the dual topology.

\begin{prop}\label{dualKQisT0}
The space $\closed{\T}^\vee$ is a $T_0$-space.
\end{prop}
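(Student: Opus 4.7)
The plan is to leverage \cref{definabletensorcharacterisation}, which describes every $\otimes$-closed definable subcategory as the common vanishing locus of a family of coherent functors applied to tensor translates. Distinct points of $\closed{\T}$ have, by construction of the Kolmogorov quotient, distinct $\otimes$-closed definable closures, and any witnessing coherent functor will yield a basic open set of the dual topology separating them.

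First, I would take two distinct points $X, Y \in \closed{\T}$. By definition of $\msf{KZg}^\otimes(\T)$ we have $\msf{Def}^\otimes(X) \cap \msf{pinj}(\T) \neq \msf{Def}^\otimes(Y) \cap \msf{pinj}(\T)$, and since definable subcategories are determined by the indecomposable pure injectives they contain, this gives $\msf{Def}^\otimes(X) \neq \msf{Def}^\otimes(Y)$. Without loss of generality $\msf{Def}^\otimes(X) \not\subseteq \msf{Def}^\otimes(Y)$; this forces $X \notin \msf{Def}^\otimes(Y)$, since otherwise the minimality of $\msf{Def}^\otimes(X)$ as the smallest $\otimes$-closed definable subcategory containing $X$ would yield the reverse inclusion.

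Next, by \cref{definabletensorcharacterisation} there exists a family $\{F_i\}_{i \in I}$ of coherent functors such that
\[
\msf{Def}^\otimes(Y) = \{Z \in \T : F_i(C \otimes Z) = 0 \text{ for all } C \in \T^\c \text{ and } i \in I\}.
\]
Because $X \notin \msf{Def}^\otimes(Y)$, at least one $F := F_i$ and some $C \in \T^\c$ satisfy $F(C \otimes X) \neq 0$. Then $Y \in (F)_\otimes^c$ (as $Y$ lies in its own definable $\otimes$-closure) while $X \notin (F)_\otimes^c$, so $(F)_\otimes^c$ is a basic open set of $\closed{\T}^\vee$ separating $Y$ from $X$. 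This verifies the $T_0$ condition.

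I do not anticipate a real obstacle: the whole argument is essentially an unpacking of definitions. The only subtlety is the WLOG reduction to the incomparability-free case, which is handled by noting that $X \in \msf{Def}^\otimes(X)$ and that $\msf{Def}^\otimes(-)$ is a closure operator.
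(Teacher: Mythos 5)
Your proof is correct and follows essentially the same route as the paper: reduce distinctness in the Kolmogorov quotient to non-containment in a definable $\otimes$-closure, then use \cref{definabletensorcharacterisation} to extract a coherent functor whose basic open set $(F)_\otimes^c$ separates the two points (you merely swap the roles of $X$ and $Y$ in the WLOG step, which is immaterial for $T_0$).
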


\begin{proof}
Let $X \neq Y \in \closed{\T}$. By definition, it suffices to find an open set $\mathscr{O}$ of $\closed{\T}^\vee$ such that $X \in \mathscr{O}$ but $Y \not\in \mathscr{O}$. Since $X \neq Y \in \closed{\T}$, we have $\msf{Def}^\otimes(X) \neq \msf{Def}^\otimes(Y)$, and hence either $Y \not\in \msf{Def}^\otimes(X)$ or $X \not\in \msf{Def}^\otimes(Y)$. Without loss of generality suppose that $Y \not\in \msf{Def}^\otimes(X)$. Then by \cref{definabletensorcharacterisation} there exists a coherent functor $F$ such that $F(C \otimes X) = 0$ for all $C \in \T^\c$, but $F(C' \otimes Y) \neq 0$ for some $C' \in \T^\c$. Consider the open set $\mathscr{O} = (F)_\otimes^c$ for this distinguished coherent functor $F$. Then, by construction, $X \in \mathscr{O}$ and $Y \not\in \mathscr{O}$ as required.
\end{proof}

One may then wonder if the space $\closed{\T}^{\msf{GZ}}$ is also $T_{0}$, which, by \cref{hspecisgz}, would be equivalent to the homological spectrum $\hspec{\T^{\c}}$ also being $T_{0}$. We now show that, whenever this is the case, we obtain an interesting consequence, related to the Balmer spectrum. 
As the Balmer spectrum $\msf{Spc}(\T^\c)$ is the terminal support theory, there is a continuous map $\phi\colon \hspec{\T^\c} \to \msf{Spc}(\T^\c)$ given by $\phi(\mc{B}) = \bm{y}^{-1}(\mc{B})$, see~\cite[Remark 3.4]{Balmernilpotence}. 

The following two results give an alternative, and independent, proof of \cite[Theorem A]{BHScomparison}. We note that the following statement and proof do not require that compact and rigid objects coincide.

\begin{thm}\label{T0comparison}
The map $\phi\colon\hspec{\T^{\c}}\to\msf{Spc}(\T^{\c})$ is injective if and only if $\hspec{\T^\c}$ is a $T_{0}$-space.
\end{thm}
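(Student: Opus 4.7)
The plan is to reduce both implications to a single observation: whether a homological prime $\mc{B}$ lies in a basic closed set of $\hspec{\T^\c}$ depends only on the image $\phi(\mc{B})\in\msf{Spc}(\T^\c)$.

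First I would recall the alternative description of homological support from \cite[Proposition 4.4]{Balmerhomsupp}, which is already cited in the excerpt: the basic closed sets may equivalently be written as
\[
\msf{supp}^\msf{h}(x) = \{\mc{B} \in \hspec{\T^\c} : \bm{y}x \notin \mc{B}\}
\]
for $x \in \T^\c$. Since $\phi(\mc{B}) = \bm{y}^{-1}(\mc{B})$ by definition, this rewrites as $\mc{B} \in \msf{supp}^\msf{h}(x)$ if and only if $x \notin \phi(\mc{B})$. In particular, membership of $\mc{B}$ in any basic closed set of $\hspec{\T^\c}$ is determined purely by $\phi(\mc{B})$.

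For the backward direction, I would assume $\hspec{\T^\c}$ is $T_0$ and take $\mc{B}_1,\mc{B}_2$ with $\phi(\mc{B}_1) = \phi(\mc{B}_2)$. By the previous observation, $\mc{B}_1$ and $\mc{B}_2$ belong to exactly the same collection of basic closed sets, hence to the same closed sets, and so are topologically indistinguishable in $\hspec{\T^\c}$. The $T_0$ hypothesis then forces $\mc{B}_1 = \mc{B}_2$, so $\phi$ is injective.

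For the forward direction, I would use that $\msf{Spc}(\T^\c)$ is a spectral space, and therefore $T_0$. Any continuous injection into a $T_0$-space pulls back the $T_0$ separation: if $\mc{B}_1 \neq \mc{B}_2$ in $\hspec{\T^\c}$, injectivity of $\phi$ yields $\phi(\mc{B}_1) \neq \phi(\mc{B}_2)$, the $T_0$-property of $\msf{Spc}(\T^\c)$ supplies an open $U$ separating them, and continuity of $\phi$ turns $\phi^{-1}(U)$ into a separating open set for $\mc{B}_1,\mc{B}_2$. The substantive input is the reformulation of $\msf{supp}^\msf{h}$ from \cite{Balmerhomsupp}; once this is invoked, the rest of the argument is elementary point-set topology, so I anticipate no genuine obstacle.
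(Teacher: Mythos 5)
Your proof is correct, and its second half coincides with the paper's: both hinge on the reformulation $\mrm{supp}^\msf{h}(x)=\{\mc{B}:\bm{y}x\notin\mc{B}\}$ from Balmer's work, so that $\phi(\mc{B})$ records exactly which basic closed sets contain $\mc{B}$, whence $T_0$ forces injectivity. Where you diverge is the forward direction: you deduce that $\hspec{\T^\c}$ is $T_0$ abstractly, from the facts that $\phi$ is continuous and that $\msf{Spc}(\T^\c)$ is spectral (hence $T_0$), using that a continuous injection into a $T_0$-space has $T_0$ domain. The paper instead stays entirely inside the combinatorial description: if $\phi(\mc{B})\neq\phi(\mc{B}')$ then some compact $C$ has $\bm{y}C$ in exactly one of the two primes, and $\mrm{supp}^\msf{h}(C)$ itself is the separating (closed, hence co-open) set, so no continuity of $\phi$ or properties of the Balmer spectrum are invoked. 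The trade-off is mild: your route imports two standard external facts (both available -- continuity of $\phi$ is cited in the paper from Balmer's nilpotence paper, and spectrality of $\msf{Spc}$ is classical), while the paper's argument is self-contained given the $\mrm{supp}^\msf{h}$ description and makes transparent that both implications are really the single observation that $\phi(\mc{B})$ determines, and is determined by, the topological indistinguishability class of $\mc{B}$. Either way the proof stands; no gap.
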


\begin{proof}
An equivalent definition of the homological support is $\mrm{supp}^\msf{h}(C) = \{\mc{B} \in \hspec{\T^\c} : \bm{y}C \not\in \mc{B}\}$, see~\cite[Proposition 4.4]{Balmerhomsupp}. Therefore we have
\begin{align}\label{closedT0}
\hspec{\T^\c} \t{ is }T_{0} \iff \forall\mc{B}\neq\mc{B}'\in\hspec{\T^{\c}} \,\,\exists \,C\in\T^{\c} \t{ such that }\bm{y}C\in\mc{B} \t{ and }\bm{y}C\not\in\mc{B'}.
\end{align}
As the comparison map $\phi\colon\hspec{\T^{\c}}\to\msf{Spc}(\T^{\c})$ is defined by $\phi(\mc{B})=\{C\in \T^{\c}:\bm{y}C\in\mc{B}\}$, if $\phi$ were not injective, then there would be distinct homological primes $\mc{B}$ and $\mc{B'}$ such that for every $C\in\T^{\c}$ we have $\bm{y}C\in\mc{B}$ if and only if $\bm{y}C\in\mc{B}'$. But this would contradict the assumption that $\hspec{\T^\c}$ is not $T_{0}$ by \cref{closedT0}. Hence, if $\hspec{\T^\c}$ is $T_0$, then $\phi$ is injective.  
On the other hand, if $\phi$ is injective, then two distinct homological primes $\mc{B}$ and $\mc{B}'$ contain different finitely presented projective functors, hence $\hspec{\T^\c}$ is $T_{0}$, again by \cref{closedT0}.
\end{proof}

\begin{cor}\label{BHSequivalences}
The following are equivalent:
\begin{enumerate}
\item The comparison map $\phi$ is a homeomorphism;
\item the comparison map $\phi$ is a bijection;
\item the homological spectrum $\hspec{\T^{\c}}$ is a $T_{0}$-space;
\item $\closed{\T}^\msf{GZ}$ is a $T_0$-space.
\end{enumerate}
\end{cor}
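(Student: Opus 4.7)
The plan is to pivot around the $T_0$ condition, deducing the four equivalences from results already established, with only one non-trivial step requiring genuine work. The equivalence of (3) and (4) is immediate from \cref{hspecisgz}: since $\Phi\colon \hspec{\T^\c} \to \closed{\T}^\msf{GZ}$ is a homeomorphism, both spaces are simultaneously $T_0$ or not. The equivalence of (2) and (3) follows by combining \cref{T0comparison} with the fact that $\phi$ is always surjective \cite[Corollary 3.9]{Balmernilpotence}: a continuous surjection is bijective if and only if it is injective. The implication (1) $\implies$ (2) is tautological.

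The only step requiring new work is (2) $\implies$ (1). The strategy is to show that a bijective $\phi$ is automatically a closed map, which, combined with continuity, yields a homeomorphism. For this I would use the identity
\[\phi^{-1}(\supp(C)) = \mrm{supp}^\msf{h}(C)\]
for each $C \in \T^\c$, which follows from the equivalent description $\mrm{supp}^\msf{h}(C) = \{\mc{B} \in \hspec{\T^\c} : \bm{y}C \notin \mc{B}\}$ recorded in the proof of \cref{T0comparison}, together with the definition $\phi(\mc{B}) = \bm{y}^{-1}(\mc{B})$.

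When $\phi$ is bijective, applying $\phi$ to both sides yields $\phi(\mrm{supp}^\msf{h}(C)) = \supp(C)$, so the basic closed sets of $\hspec{\T^\c}$ are sent to closed sets of $\msf{Spc}(\T^\c)$. Since a bijection commutes with arbitrary intersections, and every closed set of $\hspec{\T^\c}$ is of the form $\bigcap_i \mrm{supp}^\msf{h}(C_i)$, we conclude that $\phi$ sends closed sets to closed sets, so $\phi$ is closed. A continuous closed bijection is a homeomorphism.

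I do not anticipate any serious obstacle in carrying out this plan, as all the inputs---\cref{hspecisgz}, \cref{T0comparison}, the surjectivity of $\phi$, and the compatibility of the two supports under $\phi$---are already in place. The one subtlety worth emphasising is that it is essential to use bijectivity (not just injectivity) of $\phi$ to pass from $\phi^{-1}(\supp(C)) = \mrm{supp}^\msf{h}(C)$ to $\phi(\mrm{supp}^\msf{h}(C)) = \supp(C)$, since for a non-surjective map the image need not recover the whole $\supp(C)$.
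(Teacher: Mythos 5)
Your proposal is correct and takes essentially the same route as the paper: (3)$\iff$(4) via \cref{hspecisgz}, (2)$\iff$(3) via \cref{T0comparison} together with the surjectivity of $\phi$, and (1)$\iff$(2) by observing that the bijection $\phi$ is continuous and closed. The only difference is that you spell out the closedness of a bijective $\phi$ via the identity $\phi^{-1}(\supp(C)) = \mrm{supp}^\msf{h}(C)$ and the fact that closed sets are intersections of basic closed sets, a point the paper compresses into the remark that $\phi$ is ``closed and continuous by definition.''
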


\begin{proof}
The equivalence of $(1)$ and $(2)$ holds as $\phi$ is seen to be closed and continuous by definition. The equivalence of $(2)$ and $(3)$ follows from \cref{T0comparison} together with the fact that $\phi$ is surjective by~\cite[Corollary 3.9]{Balmernilpotence}. Conditions (3) and (4) are equivalent since $\closed{\T}^\msf{GZ}$ and $\hspec{\T^\c}$ are homeomorphic by \cref{hspecisgz}. 
\end{proof}

One sees from the proof of \cref{dualKQisT0}, that $\closed{\T}^\vee$ is a $T_0$-space because definable subcategories are determined by coherent functors. In general however, definable subcategories cannot be distinguished only by finitely presented projective modules, so one might expect that $\closed{\T}^\msf{GZ}$ should not be $T_0$. 

However, the comparison map $\phi$ is a bijection in all known examples, see~\cite[\S 5]{Balmernilpotence}. Consequently, the \emph{simplicity} of the definable $\otimes$-closure of points in $\closed{\T}$ (as proved in \cref{simple}) must be the crucial feature which forces $\closed{\T}^\msf{GZ}$ to be $T_0$ in all the known examples. We hope that this observation sheds lights on the relation between the homological spectrum and the Balmer spectrum beyond the known examples in future.

\section{An example: the spectrum of $\D(\Z)$}\label{sec:example}
In this section, as an illustrative example we compute the homological spectrum of $\D(\Z)^\c$ using the method introduced above. It is shown in \cite[Theorem 8.1]{prestgarkusha} that
\[
\msf{Zg}(\D(\Z))\simeq \coprod_{n\in\Z}\msf{Zg}(\Z)
\]
where $\msf{Zg}(\Z)$ is the usual Ziegler spectrum of $\ab$. Consequently, an object in $\D(\Z)$ is indecomposable pure injective if and only if it is isomorphic to $X[n]$, where $X\in\msf{pinj}(\Z)$, $n\in\Z$, and $X[n]$ denotes the stalk in degree $n$.

The Ziegler spectrum of $\Z$ is well known~\cite[\S 5.2.1]{psl}. Its points are the following:

\begin{enumerate}\label{listzgz}
\item the quotient rings $\Z/p^{i}$ for each prime $p$ and $i > 0$;
\item the Pr\"{u}fer groups $\Z/p^{\infty}$ for each prime $p$;
\item the $p$-adic integers $\widehat{\Z_{p}}$ for each prime $p$;
\item the rationals $\Q$.
\end{enumerate}
We shall also need the following fact: the closed points of $\msf{Zg}(\Z)$ are the $\Z/p^{i}$ for each $1 \leq i < \infty$ and $p$ prime, as well as $\Q$. 

We will use this information to describe $\msf{Def}^{\otimes}(X)\subseteq\D(\Z)$, where $X\in\D(\Z)$ is an indecomposable pure injective. Note that since any $\otimes$-closed definable subcategory is closed under shifts, we see that $\msf{Def}^{\otimes}(X)=\msf{Def}^{\otimes}(X[n])$ for all $n\in\Z$, so we may assume without loss of generality that $X$ is a module concentrated in degree $0$.

Given a collection of objects $\msf{Y} \subseteq \ab$, we write $\msf{Def}_\Z(\msf{Y})$ for the smallest definable subcategory of $\ab$ containing $\msf{Y}$. We will use the fact that homology preserves definable building, see \cite{definablefunctors}. More explicitly, if $\msf{X}\subseteq\D(\Z)$ is a collection of objects and $U\in\msf{Def}(\msf{X})$, then $H_{0}(U)\in\msf{Def}_\Z(H_{0}(X):X\in\msf{X})$. We now describe the definable closures of the points of $\msf{Zg}^{\otimes}(\Z)$.

\begin{enumerate}\setlength{\parindent}{0cm}\setlength{\parskip}{0.1cm}
\item Consider $\Z/p^{i}$ viewed in degree zero. We claim that 
\begin{equation}\label{ex:finitelength}
\msf{Def}^{\otimes}(\Z/p^{i})\cap\msf{pinj}(\D(\Z))=\coprod_{k\in\Z}\{\Sigma^k\Z/p^{j}:j\leq i\}.
\end{equation}
Firstly, if $j\leq i$, then $\Z/p^{i}\otimes^{\msf{L}}\Z/p^{j}\simeq (\Z/p^{j}\xrightarrow{0}\Z/p^{j})$, which contains $\Z/p^{j}[0]$ as a pure subobject. This gives one inclusion in \cref{ex:finitelength}. To show that we obtain no other indecomposable pure injectives, we use that homology preserves definable building.

Since $\msf{Def}^{\otimes}(\Z/p^{i})=\msf{Def}(C\otimes^{\msf{L}}\Z/p^{i}:C\in\D(\Z)^\c)$, it follows that if $X[0]\in\msf{Def}^{\otimes}(\Z/p^{i})$, we have $X\in\msf{Def}_{\Z}(H_{0}(C\otimes^{\msf{L}}\Z/p^{i}):C\in\D(\Z)^\c)$. As $\Z$ is hereditary and has finite global dimension, we have that
\begin{align*}
\msf{Def}_{\Z}(H_{0}(C\otimes^{\msf{L}}\Z/p^{i}):C\in\D(\Z)^\c) &=\msf{Def}_{\Z}(M \otimes \Z/p^{i}:M\in\mod{\Z})
\\
&=\msf{Def}_{\Z}(\Z/p^{j}:j\leq i)
\end{align*}
where the last equality follows from the structure theorem of finitely generated abelian groups.  

As $\Z/p^j$ is a closed point in $\msf{Zg}(\Z)$ for every $j<\infty$, it follows that $\msf{Def}_\Z(\Z/p^j \mid j \leq i) \cap \msf{pinj}(\Z)$ is closed, and hence cannot contain any other indecomposable pure injectives. Thus, by the definable building argument the equality of \cref{ex:finitelength} holds.

\item 
We now do the case of the $p$-Pr\"{u}fer group $\Z/p^{\infty}$, and show that \[
\msf{Def}^{\otimes}(\Z/p^{\infty})\cap\msf{pinj}(\D(\Z))=\coprod_{k \in \Z}(\{\Sigma^k\Z/p^{i}:1\leq i\leq\infty\}\cup\{\Sigma^k\Q\}).
\]
For any $i$, we have $\Z/p^i \otimes^\msf{L} \Z/p^\infty \simeq \Z/p^i[1]$, so by shifting we obtain $\Z/p^{i}$. We can obtain $\Q$ without using the tensor product. Indeed, since $\Q \in \msf{Def}_\Z(\Z/p^\infty)$~\cite[\S 5.2.3]{psl}, and since products, pure subobjects and pure quotients are computed degreewise, it follows that $\Q \in \msf{Def}(\Z/p^\infty)$. This gives the reverse inclusion. To see that we cannot obtain any other indecomposable pure injectives, we use a similar argument to the previous case. 

Again, it suffices to consider $\msf{Def}_{\Z}(M \otimes \Z/p^{\infty}:M\in\mod{\Z})=\msf{Def}_{\Z}(\Z/p^{i}:1\leq i\leq \infty)$. The only point that could possibly be picked up is the $p$-adics $\widehat{\Z_{p}}$, but this cannot happen by the description of the Ziegler topology given at \cite[p. 225]{psl}.  

\item 
We now do the case for $\widehat{\Z_{p}}$, the $p$-adic integers, and claim that \[
\msf{Def}^{\otimes}(\widehat{\Z_{p}})\cap\msf{pinj}(\D(\Z))=\coprod_{k \in\Z}(\{\Sigma^k\Z/p^{i} : 1\leq i\leq \infty\}\cup\{\Sigma^k\Q\}\cup\{\Sigma^k\widehat{\Z_{p}}\}).
\] 
We have that
\[
\widehat{\Z_{p}}\otimes^\msf{L} \Z/p^{i} \simeq \Z/p^{i} \quad \mathrm{and} \quad \widehat{\Z_{p}} \otimes^\msf{L} \Z/p^\infty \simeq \Z/p^\infty,
\]
hence we pick up all finite length points related to $p$ as well as the $p$-Pr\"{u}fer group. To see we get $\Q$, note that, as shown at \cite[p. 225]{psl}, $\Q\in\msf{Def}_{\Z}(\widehat{\Z_{p}})$, and therefore the same holds in $\D(\Z)$, by considering the definable building process in degree zero. It is impossible to pick up any indecomposable pure injective related to a prime $q\neq p$ as there are no homomorphisms from $\Z/q$ to $\widehat{\Z_{p}}$ or $\Z/p^{i}$ for any $i$, and therefore the above description is complete.

\item
Finally we consider $\Q$. If $X[0]\in\msf{Def}^{\otimes}(\Q)$, then $X\in\msf{Def}_\Z(\Q\otimes_{\Z}M:M\in\mod{\Z})=\msf{Def}_\Z(\Q)$. But $\msf{Def}_\Z(\Q)\cap\msf{pinj}(\Z)=\Q$. In particular, we see that $\msf{Def}^{\otimes}(\Q)\cap\msf{pinj}(\D(\Z))=\coprod_{k \in \Z}\{\Sigma^k\Q\}$.
\end{enumerate}

From the previous discussion we now have the following result.
\begin{thm}\label{specZ}
The points and closure relations of $\msf{KZg}^\otimes(\msf{D}(\Z)))$, with the quotient topology, is
\[
\begin{tikzcd}[row sep=1cm]
\cdots &\widehat{\Z_{p_{i}}} \arrow[d, dash,]& \widehat{\Z_{p_{i+1}}} \arrow[d, dash]& \cdots& \widehat{\Z_{p_{j}}}\arrow[d, dash] & \widehat{\Z_{p_{j+1}}}\arrow[d, dash]& \cdots \\
\cdots &\Z/p_{i}^{\infty} \arrow[d, dash,] & \Z/p_{i+1}^{\infty}\arrow[d, dash]&\cdots & \Z/p_{j}^{\infty}\arrow[d, dash] & \Z/p_{j+1}^{\infty} \arrow[d, dash,]& \cdots \\
\cdots &\vdots& \vdots& \Q \arrow[ull, dash, crossing over] \arrow[ul, dash] \arrow[ur, dash] \arrow[urr, dash, crossing over] &\vdots &\vdots \\
\cdots &\Z/p_{i}^{2} \arrow[u,dash]& \Z/p_{i+1}^{2} \arrow[u,dash]&\cdots & \Z/p_{j}^{2} \arrow[u,dash]& \Z/p_{j+1}^{2}\arrow[u,dash]& \cdots\\
\cdots &\Z/p_{i} \arrow[u,dash]& \Z/p_{i+1} \arrow[u,dash]& \cdots& \Z/p_{j} \arrow[u,dash]& \Z/p_{j+1}\arrow[u,dash]& \cdots
\end{tikzcd}
\]
where closure is downwards.
\end{thm}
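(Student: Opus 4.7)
The plan is to assemble the four case-by-case computations in the discussion preceding the statement into the claimed Hasse diagram. The underlying set of $\msf{KZg}^{\otimes}(\D(\Z))$ is $\msf{pinj}(\D(\Z))$ modulo the relation $X \sim Y \iff \msf{Def}^{\otimes}(X) \cap \msf{pinj}(\D(\Z)) = \msf{Def}^{\otimes}(Y) \cap \msf{pinj}(\D(\Z))$. Since every $\otimes$-closed definable subcategory is closed under $\Sigma^{\pm 1}$ (as $\Sigma\1$ is compact), we have $\Sigma^{n}X \sim X$ for all $n\in\Z$, so the underlying set of $\msf{KZg}^{\otimes}(\D(\Z))$ is a quotient of $\msf{Zg}(\Z)$ (identified with pure injectives of $\D(\Z)$ concentrated in degree zero).

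Next, I would invoke the four computations of $\msf{Def}^{\otimes}(X) \cap \msf{pinj}(\D(\Z))$ for $X \in \{\Z/p^{i},\,\Z/p^{\infty},\,\widehat{\Z_{p}},\,\Q\}$ given before the theorem, and check that the four types give pairwise distinct classes in $\msf{KZg}^{\otimes}(\D(\Z))$. For distinct primes $p \neq q$, the $p$-primary and $q$-primary classes are separated by the presence or absence of $\Z/p^{i}$ in the definable $\otimes$-closure; and within a single prime $p$, the four lists are visibly different: for example $\widehat{\Z_{p}}$ lies in $\msf{Def}^{\otimes}(\widehat{\Z_{p}})$ but not in $\msf{Def}^{\otimes}(\Z/p^{\infty})$, since by \cref{derivedorunderived} the latter is controlled by $\msf{Def}^{\otimes}_{\ab}(\Z/p^{\infty})$, whose indecomposable pure injectives were enumerated in case (2). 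Similar direct comparisons dispose of the remaining pairs.

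Finally, the closure relations are read off the same four lists. By definition of the quotient topology on $\msf{KZg}^{\otimes}(\D(\Z))$, the closure of a class $[X]$ is the image of $\msf{Def}^{\otimes}(X) \cap \msf{pinj}(\D(\Z))$ under the projection $\msf{Zg}^{\otimes}(\D(\Z)) \twoheadrightarrow \msf{KZg}^{\otimes}(\D(\Z))$. Substituting: $\msf{Def}^{\otimes}(\Q) \cap \msf{pinj}(\D(\Z)) = \{\Q\}$, so $[\Q]$ is closed; $\msf{Def}^{\otimes}(\Z/p^{i}) \cap \msf{pinj}(\D(\Z)) = \{\Z/p^{j}: j \leq i\}$, giving the totally-ordered tower at prime $p$; $\msf{Def}^{\otimes}(\Z/p^{\infty}) \cap \msf{pinj}(\D(\Z)) = \{\Z/p^{j}: 1 \leq j \leq \infty\} \cup \{\Q\}$, adding $\Q$ at the middle; and $\msf{Def}^{\otimes}(\widehat{\Z_{p}}) \cap \msf{pinj}(\D(\Z)) = \{\widehat{\Z_{p}}\} \cup \{\Z/p^{j}: 1\leq j\leq\infty\} \cup \{\Q\}$, giving the full column under $\widehat{\Z_{p}}$ plus the diagonal edge to $\Q$. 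Taken together, these four computations exactly reproduce the Hasse diagram.

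There is no real obstacle at this stage: the four case analyses preceding the theorem have done all the computational work, and what remains is just bookkeeping plus the observation that taking Kolmogorov quotient keeps the four families separate. If anywhere required more care, it would be the verification that within a single prime the class of $\widehat{\Z_{p}}$ is not collapsed with $[\Z/p^{\infty}]$; this is exactly where \cref{derivedorunderived} is essential, transferring the question back to the well-understood spectrum $\msf{Zg}(\Z)$.
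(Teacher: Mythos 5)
Your proposal is correct and follows essentially the same route as the paper: the theorem is exactly the assembly of the four case computations of $\msf{Def}^{\otimes}(X)\cap\msf{pinj}(\D(\Z))$ carried out just before the statement, together with shift-invariance of $\otimes$-closed definable subcategories and the observation that the closure of $[X]$ in the Kolmogorov quotient is the image of $\msf{Def}^{\otimes}(X)\cap\msf{pinj}(\D(\Z))$. Your explicit check that the classes of $\widehat{\Z_{p}}$, $\Z/p^{\infty}$, $\Z/p^{i}$ and $\Q$ remain distinct in the quotient is the same bookkeeping the paper leaves implicit, using the same input (\cref{derivedorunderived} and the lists in cases (1)--(4)).
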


From the above result, we see that as a set we have
\[
\closed{\D(\Z)}=\{\Z/p : p \t{ prime}\}\cup\{\Q\}.
\]
Define $f\colon \closed{\D(\Z)}^\msf{GZ} \to \msf{Spec}(\Z)$ by $f(\Z/p) = (p)$ and $f(\Q) = (0)$. Recall that the only closed sets in $\msf{Spec}(\Z)$ are the whole space, and the finite unions of $(p_i)$ where each $p_i$ is prime. From this one sees that $f$ is continuous as $f^{-1}((p)) = [\Z/p]_\otimes^c$. In a similar way one verifies that $f$ is also closed, and hence $f\colon \closed{\D(\Z)}^\msf{GZ} \to \msf{Spec}(\Z)$ is a homeomorphism. 

Combining this with \cref{hspecisgz}, we see that $\hspec{\D(\Z)^\c}$ is homeomorphic to $\msf{Spec}(\Z)$ and hence also homeomorphic to the Balmer spectrum $\msf{Spc}(\D(\Z)^\c)$. As such, this example gives a very concrete proof that the homological and Balmer spectrum are homeomorphic for $\T = \D(\Z)$.

\bibliographystyle{abbrv}
\bibliography{references.bib}

\begin{thebibliography}{10}

\bibitem{AMV}
L.~Angeleri~H\"{u}gel, F.~Marks, and J.~Vit\'{o}ria.
\newblock Torsion pairs in silting theory.
\newblock {\em Pacific J. Math.}, 291(2):257--278, 2017.

\bibitem{discretederived}
K.~K. Arnesen, R.~Laking, D.~Pauksztello, and M.~Prest.
\newblock The {Ziegler} spectrum for derived-discrete algebras.
\newblock {\em Adv. Math.}, 319:653--698, 2017.

\bibitem{bigcats}
S.~Balchin and G.~Stevenson.
\newblock Big categories, big spectra.
\newblock arXiv:2109.11934v4, 2023.

\bibitem{BalSpec}
P.~Balmer.
\newblock The spectrum of prime ideals in tensor triangulated categories.
\newblock {\em J. Reine Angew. Math.}, 588:149--168, 2005.

\bibitem{Balmerhomsupp}
P.~Balmer.
\newblock Homological support of big objects in tensor-triangulated categories.
\newblock {\em J. \'{E}c. polytech. Math.}, 7:1069--1088, 2020.

\bibitem{Balmernilpotence}
P.~Balmer.
\newblock Nilpotence theorems via homological residue fields.
\newblock {\em Tunis. J. Math.}, 2(2):359--378, 2020.

\bibitem{BalmerCameron}
P.~Balmer and J.~C. Cameron.
\newblock Computing homological residue fields in algebra and topology.
\newblock {\em Proc. Amer. Math. Soc.}, 149(8):3177--3185, 2021.

\bibitem{BF}
P.~Balmer and G.~Favi.
\newblock Generalized tensor idempotents and the telescope conjecture.
\newblock {\em Proc. Lond. Math. Soc. (3)}, 102(6):1161--1185, 2011.

\bibitem{bks}
P.~Balmer, H.~Krause, and G.~Stevenson.
\newblock Tensor-triangular fields: ruminations.
\newblock {\em Selecta Math. (N.S.)}, 25(1):Paper No. 13, 36, 2019.

\bibitem{BKSframe}
P.~Balmer, H.~Krause, and G.~Stevenson.
\newblock The frame of smashing tensor-ideals.
\newblock {\em Math. Proc. Cambridge Philos. Soc.}, 168(2):323--343, 2020.

\bibitem{BHScomparison}
T.~Barthel, D.~Heard, and B.~Sanders.
\newblock Stratification and the comparison between homological and tensor
  triangular support.
\newblock {\em Q. J. Math.}, 74(2):747--766, 2023.

\bibitem{bhs}
T.~Barthel, D.~Heard, and B.~Sanders.
\newblock Stratification in tensor triangular geometry with applications to
  spectral {M}ackey functors.
\newblock {\em Camb. J. Math.}, 11(4):829--915, 2023.

\bibitem{BCR}
D.~J. Benson, J.~F. Carlson, and J.~Rickard.
\newblock Thick subcategories of the stable module category.
\newblock {\em Fund. Math.}, 153(1):59--80, 1997.

\bibitem{dualitypairs}
I.~Bird and J.~Williamson.
\newblock Duality pairs, phantom maps, and definability in triangulated
  categories.
\newblock {\em Proc. Roy. Soc. Edinburgh Sect. A}.
\newblock Published online 2024:1-46.
  \url{https://doi.org/10.1017/prm.2024.73}.

\bibitem{definablefunctors}
I.~Bird and J.~Williamson.
\newblock Definable functors between triangulated categories with applications
  to tensor-triangular geometry and representation theory.
\newblock arXiv:2310.02159v2, 2023.

\bibitem{CameronStevenson}
J.~C. Cameron and G.~Stevenson.
\newblock Homological residue fields as comodules over coalgebras.
\newblock In {\em Triangulated categories in representation theory and
  beyond---the {A}bel {S}ymposium 2022}, volume~17 of {\em Abel Symp.}, pages
  255--270. Springer, Cham, [2024] \copyright 2024.

\bibitem{Carlson}
J.~F. Carlson.
\newblock The variety of an indecomposable module is connected.
\newblock {\em Invent. Math.}, 77(2):291--299, 1984.

\bibitem{conde2022functorial}
T.~Conde, M.~Gorsky, F.~Marks, and A.~Zvonareva.
\newblock A functorial approach to rank functions on triangulated categories.
\newblock {\em J. Reine Angew. Math.}, 811:135--181, 2024.

\bibitem{CrawleyBoevey}
W.~Crawley-Boevey.
\newblock Locally finitely presented additive categories.
\newblock {\em Comm. Algebra}, 22(5):1641--1674, 1994.

\bibitem{prestgarkusha}
G.~Garkusha and M.~Prest.
\newblock Triangulated categories and the {Z}iegler spectrum.
\newblock {\em Algebr. Represent. Theory}, 8(4):499--523, 2005.

\bibitem{herzog}
I.~Herzog.
\newblock The {Z}iegler spectrum of a locally coherent {G}rothendieck category.
\newblock {\em Proc. London Math. Soc. (3)}, 74(3):503--558, 1997.

\bibitem{Hopkinsglobal}
M.~J. Hopkins.
\newblock Global methods in homotopy theory.
\newblock In {\em Homotopy theory ({D}urham, 1985)}, volume 117 of {\em London
  Math. Soc. Lecture Note Ser.}, pages 73--96. Cambridge Univ. Press,
  Cambridge, 1987.

\bibitem{HS}
M.~J. Hopkins and J.~H. Smith.
\newblock Nilpotence and stable homotopy theory. {II}.
\newblock {\em Ann. of Math. (2)}, 148(1):1--49, 1998.

\bibitem{axiomatic}
M.~Hovey, J.~H. Palmieri, and N.~P. Strickland.
\newblock Axiomatic stable homotopy theory.
\newblock {\em Mem. Amer. Math. Soc.}, 128(610):x+114, 1997.

\bibitem{krspec2}
H.~Krause.
\newblock The spectrum of a locally coherent category.
\newblock {\em J. Pure Appl. Algebra}, 114(3):259--271, 1997.

\bibitem{kredc}
H.~Krause.
\newblock Exactly definable categories.
\newblock {\em J. Algebra}, 201(2):456--492, 1998.

\bibitem{krsmash}
H.~Krause.
\newblock Smashing subcategories and the telescope conjecture---an algebraic
  approach.
\newblock {\em Invent. Math.}, 139(1):99--133, 2000.

\bibitem{krspec}
H.~Krause.
\newblock The spectrum of a module category.
\newblock {\em Mem. Amer. Math. Soc.}, 149(707):x+125, 2001.

\bibitem{krcoh}
H.~Krause.
\newblock Coherent functors in stable homotopy theory.
\newblock {\em Fund. Math.}, 173(1):33--56, 2002.

\bibitem{krcq}
H.~Krause.
\newblock Cohomological quotients and smashing localizations.
\newblock {\em Amer. J. Math.}, 127(6):1191--1246, 2005.

\bibitem{krbook}
H.~Krause.
\newblock {\em Homological theory of representations}, volume 195 of {\em
  Cambridge Studies in Advanced Mathematics}.
\newblock Cambridge University Press, Cambridge, 2022.

\bibitem{Neemanchromatic}
A.~Neeman.
\newblock The chromatic tower for {$D(R)$}.
\newblock {\em Topology}, 31(3):519--532, 1992.
\newblock With an appendix by Marcel B\"{o}kstedt.

\bibitem{Neemantri}
A.~Neeman.
\newblock {\em Triangulated categories}, volume 148 of {\em Annals of
  Mathematics Studies}.
\newblock Princeton University Press, Princeton, NJ, 2001.

\bibitem{psl}
M.~Prest.
\newblock {\em Purity, spectra and localisation}, volume 121 of {\em
  Encyclopedia of Mathematics and its Applications}.
\newblock Cambridge University Press, Cambridge, 2009.

\bibitem{dac}
M.~Prest.
\newblock Definable additive categories: purity and model theory.
\newblock {\em Mem. Amer. Math. Soc.}, 210(987):vi+109, 2011.

\bibitem{PrestWagstaffe}
M.~Prest and R.~Wagstaffe.
\newblock Model theory in compactly generated (tensor-)triangulated categories.
\newblock {\em Model Theory}, 3(1):147--197, 2024.

\bibitem{Thomason}
R.~W. Thomason.
\newblock The classification of triangulated subcategories.
\newblock {\em Compositio Math.}, 105(1):1--27, 1997.

\bibitem{wagstaffe}
R.~Wagstaffe.
\newblock Definability in monoidal additive and tensor triangulated categories.
\newblock PhD thesis, The University of Manchester, 2021.

\bibitem{Ziegler}
M.~Ziegler.
\newblock Model theory of modules.
\newblock {\em Ann. Pure Appl. Logic}, 26(2):149--213, 1984.

\bibitem{Zou}
C.~Zou.
\newblock Support theories for non-noetherian tensor triangulated categories.
\newblock arXiv:2312.08596, 2023.

\end{thebibliography}
\end{document}